\documentclass[a4paper,12pt,intlimits,oneside]{amsart}

\usepackage{enumerate}
\usepackage{amsfonts,amsmath}

\usepackage{latexsym,amssymb}

\usepackage{mathrsfs}

\abovedisplayskip=2pt
\belowdisplayskip=2pt
\allowdisplaybreaks

\textwidth15cm \textheight21cm \evensidemargin.2cm
\oddsidemargin.2cm

\addtolength{\headheight}{3.2pt}
\newcommand{\comment}[1]{}

\newcommand{\R}{{\mathbb R}}

\def\X{{\mathcal X}}

\def\a{\mathfrak a}

\def\m{\mathfrak m}

\def\M{{\mathcal M}}

\def\X{{\mathcal X}}

\def\sign{{\mbox{\rm  sign}}\,}

\newcommand{\ackname}{Acknowledgements}
\makeatletter
\if@titlepage
\newenvironment{acknowledgement}{%
	\titlepage
	\null\vfil
	\@beginparpenalty\@lowpenalty
	\begin{center}%
		\bfseries \ackname
		\@endparpenalty\@M
\end{center}}%
{\par\vfil\null\endtitlepage}
\else
\newenvironment{acknowledgement}{%
	\if@twocolumn
	\section*{\ackname}%
	\else
	\begin{center}%
		{\bfseries \ackname\vspace{0em}\vspace{\z@}}%
	\end{center}%
	\quotation
	\fi}
\fi
\makeatother

\begin{document}
	
	\title[]{Generalized Calder\'on-Zygmund operators on the Hardy space $H^1_\rho(\X)$}         

	\author{Luong Dang Ky}
	\address{Department of Education, Quy Nhon University, 170 An Duong Vuong, Quy Nhon, Binh Dinh, Vietnam} 
	\email{{\tt luongdangky@qnu.edu.vn}}

	\keywords{Space of homogeneous type, Calder\'on-Zygmund operator, log-Dini condition, Hardy space}
	\subjclass[2020]{42B20, 42B30, 42B35}


	\begin{abstract}
		Let $(\X, d,\mu)$ be an RD-space, and let $\rho$ be an admissible function on $\X$. We establish necessary and sufficient conditions for the boundedness of a new class of generalized Calder\'on-Zygmund operators of log-Dini type on the Hardy space $H^1_\rho(\X)$, introduced by Yang and Zhou. Our results extend and unify some recent results, providing further insights into the study of singular integral operators in this setting.
	\end{abstract}

	\maketitle
	\newtheorem{theorem}{Theorem}[section]
	\newtheorem{lemma}{Lemma}[section]
	\newtheorem{proposition}{Proposition}[section]
	\newtheorem{remark}{Remark}[section]
	\newtheorem{corollary}{Corollary}[section]
	\newtheorem{definition}{Definition}[section]
	\newtheorem{example}{Example}[section]
	\numberwithin{equation}{section}
	\newtheorem{Theorem}{Theorem}[section]
	\newtheorem{Lemma}{Lemma}[section]
	\newtheorem{Proposition}{Proposition}[section]
	\newtheorem{Remark}{Remark}[section]
	\newtheorem{Corollary}{Corollary}[section]
	\newtheorem{Definition}{Definition}[section]
	\newtheorem{Example}{Example}[section]
	\newtheorem*{theoremjj}{Theorem J-J}
	\newtheorem*{theorema}{Theorem A}
	\newtheorem*{theoremb}{Theorem B}
	\newtheorem*{theoremc}{Theorem C}
	\newtheorem*{conjecture}{Conjecture}
	\newtheorem*{open question}{Open question}

	\section{Introduction and statement of the results}	
	
	The theory of classical Hardy spaces $H^p(\R^n)$ plays an important role in many areas of analysis and PDEs. When studying the theory of singular integral operators and their applications, the Hardy spaces $H^p(\R^n)$ are good substitutes for the Lebesgue spaces $L^p(\R^n)$ when $p\in (0,1]$. For example, when $p\in (0,1]$, it is well known that the classical Riesz transforms $\{R_j\}_{j=1}^n$ are not bounded on $L^p(\R^n)$, however, they are bounded on $H^p(\R^n)$. Unfortunately, when $p\in (0,1]$, the principle that $H^p(\R^n)$ is like  $L^p(\R^n)$ breaks down at some key points namely: (1) $H^p(\R^n)$ does not contain the Schwartz class of rapidly decreasing test functions $\mathscr S(\R^n)$; (2) pseudo-differential operators are not bounded on $H^p(\R^n)$; (3) if $\phi\in \mathscr S(\R^n)$ and $f\in H^p(\R^n)$ then $\phi f$ is not necessarily in $H^p(\R^n)$. These, together with some applications in PDEs, motivated Goldberg \cite{Go} to introduce and study a localized version of the Hardy spaces $H^p(\R^n)$ that is denoted by $h^p(\R^n)$. Recently, a theory of Hardy spaces associated with differential operators $H^p_L(\R^n)$ was also introduced and developed by many authors (see, for example, \cite{BDK, DZ, DZ03, HLMMY, YZ}).
	
	When studying the boundedness of classical Calder\'on-Zygmund operators $T$ on the Hardy spaces $H^p(\R^n)$, it is well known that the necessary and sufficient condition for the boundedness of $T$ on $H^1(\R^n)$ is the condition $T^*1=0$, that is $\int_{\R^n} Tf(x) dx=0$ for all $f\in H^1(\R^n)$. In recent years, there has been an increasing interest in the study of boundedness of generalized Calder\'on-Zygmund operators $T$ on the Hardy type spaces (see, for example, \cite{BDK, BLL, BL, DLPV22, DLPV23, DHZ, MSTZ}). A natural question arises is what can one say about necessary and sufficient conditions for the boundedness of these generalized Calder\'on-Zygmund operators $T$ on the classical local Hardy spaces $h^p(\R^n)$ and on the Hardy spaces associated with differential operators $H^p_L(\R^n)$?
	
	Let $\varepsilon\in (0,\infty)$ and $\delta\in (0,1]$. Following Ding, Han and Zhu \cite{DHZ}, we say that $T$ is an inhomogeneous $(\varepsilon,\delta)$-Calder\'on-Zygmund operator if $T$ is bounded on $L^2(\R^n)$ and its kernel $K(x,y)$ satisfies the following conditions: there exists a positive constant $C$ such that for all $x\ne y$,
	$$|K(x,y)|\leq C\min\left\{\frac{1}{|x-y|^n}, \frac{1}{|x-y|^{n+\varepsilon}}\right\};$$
	and for all $x,x',y\in\X$ with $|x-x'|\leq \frac{|x-y|}{2}$,
	$$|K(x,y)-K(x',y)|+|K(y,x)-K(y,x')|\leq \frac{C}{|x-y|^{n}} \left(\frac{|x-x'|}{|x-y|}\right)^\delta.$$
	Recently, in \cite{BL, DLPV22, DLPV23, DHZ}, the authors provided necessary and sufficient conditions for the boundedness of the inhomogeneous $(\varepsilon,\delta)$-Calder\'on-Zygmund operators $T$ on the local Hardy spaces $h^p(\R^n)$ and their dual spaces.
	
	Let $\delta\in (0,1]$ and $L=-\Delta+V$ be a Schr\"odinger operator on $\R^n$ with $n\geq 3$ and the nonnegative potential $V$ belongs to the reverse H\"older class $RH_{n/2}(\R^n)$, one defines the critical radius function $\rho_L:\R^n\to [0,\infty)$ associated to $L$ as follows
	\begin{equation}\label{15:30, 28/10/2024}
		\rho_L(x)=\sup\left\{r>0:\frac{1}{r^{n-2}}\int_{B(x,r)} V(y)dy\leq 1\right\}.
	\end{equation}
	Following Ma, Stinga, Torrea and Zhang \cite{MSTZ}, we say that $T$ is a $\delta$-Schr\"odinger-Calder\'on-Zygmund operator if $T$ is bounded on $L^2(\R^n)$ and its kernel $K(x,y)$ satisfies the following conditions: for each $N>0$ there exists a positive constant $C(N)$ such that for all $x\ne y$,
	$$|K(x,y)|\leq \frac{C(N)}{|x-y|^n}\left(1+\frac{|x-y|}{\rho_L(x)}\right)^{-N};$$
	and there exists a positive constant $C$ such that for all $x,x',y\in\X$ with $|x-x'|\leq \frac{|x-y|}{2}$,
	$$|K(x,y)-K(x',y)|+|K(y,x)-K(y,x')|\leq  \frac{C}{|x-y|^{n}}\left(\frac{|x-x'|}{|x-y|}\right)^\delta.$$
	Recently, in \cite{BDK, BLL, MSTZ}, the authors provided necessary and sufficient conditions for the boundedness of the $\delta$-Schr\"odinger-Calder\'on-Zygmund operators $T$ on the Hardy spaces associated with Schr\"odinger operators $H^p_L(\R^n)$ and their dual spaces.
	
	Let $(\X, d,\mu)$ be an RD-space, that is, $(\X, d,\mu)$ is a space of homogeneous type in the sense of Coifman and Weiss \cite{CW} with the additional property that a reverse doubling property holds in $\X$ (see Section \ref{11:15, 31/10/2023}). Typical examples of RD-spaces include Euclidean spaces, Euclidean spaces with weighted measures satisfying the doubling property, and Carnot-Carath\'eodory spaces with doubling measures. We refer to the seminal paper of Han, M\"uller and Yang \cite{HMY} (see also \cite{GLY, LY, YYZ, YZ}) for a systematic study of the theory of function spaces in harmonic analysis on RD-spaces. Let $\rho$ be an admissible function on $\X$ (see Definition \ref{10:09, 30/11/2023}), in \cite{YZ}, Yang and Zhou introduced and developed a theory of localized Hardy spaces $H^1_\rho(\X)$ associated with $\rho$.  When $\X=\R^n$ with $n\geq 3$, $V$ belongs to the reverse H\"older class $RH_{n/2}(\R^n)$, and $\rho=\rho_L$ is the critical radius function associated to the Schr\"odinger operator $L=-\Delta+V$ (see \eqref{15:30, 28/10/2024}), it is well known (see \cite{YZ}) that the Hardy space $H^1_\rho(\X)$ coincides with the Hardy space $H^1_L(\R^n)$. This theory of the Hardy spaces $H^1_\rho(\X)$ has not only applications for Schr\"odinger operator settings, but also a wide range of applications in other settings (see the seminal paper \cite{YZ}). Recently, Bui, Duong and Ky \cite{BDK} extended the work of Yang and Zhou \cite{YZ} by developing a theory of localized Hardy spaces $H^p_\rho(\X)$ with $p\in (\frac{\mathfrak{n}}{\mathfrak{n}+1}, 1]$, where $\mathfrak{n}$ is the doubling order of $\mu$ (see \eqref{16:53, 14/10/2023}). 
	
	Motivated by \cite{BDK, YZ} and recent studies concerning the boundedness of the $\delta$-Schr\"odinger-Calder\'on-Zygmund operators (see \cite{BDK, BLL, MSTZ}) and of the inhomogeneous $(\varepsilon,\delta)$-Calder\'on-Zygmund operators (see \cite{BL, DLPV22, DLPV23, DHZ}) on the Hardy type spaces and their dual spaces, in this paper, we introduce and study the boundedness of the generalized Calder\'on-Zygmund operators of log-Dini type associated to $\rho$ (see Definitions \ref{10:32, 02/11/2023} and \ref{20:36, 21/12/2023}) on the Hardy space $H^1_\rho(\X)$ of Yang and Zhou \cite{YZ}, and its dual space $BMO_\rho(\X)$. These generalized Calder\'on-Zygmund operators are more general than the $\delta$-Schr\"odinger-Calder\'on-Zygmund operators in \cite{BPQ, BHQ, BDK, BLL, MSTZ} and the inhomogeneous $(\varepsilon,\delta)$-Calder\'on-Zygmund operators in \cite{BL, DLPV22, DLPV23, DHZ}. 
	
	Recall that a modulus of continuity $\omega: [0,\infty)\to [0,\infty)$ (that is, $\omega$ is increasing, subadditive and $\omega(0)=0$) is said to be satisfying the Dini condition if
	\begin{equation}\label{10:08, 31/10/2023}
		\|\omega\|_{\text{Dini}}:=\int_{0}^{1} \omega(t)\frac{dt}{t}<\infty,
	\end{equation}
	or is said to be satisfying the log-Dini condition if
	\begin{equation}\label{10:09, 31/10/2023}
		\|\omega\|_{\text{log-Dini}}:=\int_{0}^{1} \omega(t)\log\frac{1}{t}\frac{dt}{t}<\infty.
	\end{equation}
	Let $\omega_1, \omega_2: [0,\infty)\to[0,\infty)$ be two modulus of continuity satisfying the Dini condition, $\rho$ be an admissible function on the RD-Space $(\X,d,\mu)$, and $\kappa$ be the quasi-metric constant of $d$ (see \eqref{16:19, 05/10/2023}). Based on the ideas from \cite{CW, DHZ, MSTZ}, we define the generalized Calder\'on-Zygmund operators associated to $\omega_1, \omega_2$ and $\rho$ as follows.

	\begin{definition}\label{10:32, 02/11/2023}
		A bounded linear operator $T: L^2(\X)\to L^2(\X)$ is called an $(\omega_1,\omega_2)_\rho$-Calder\'on-Zygmund operator if 
		$$Tf(x)=\int_{\X} K(x,y)f(y)d\mu(y)$$
		for all $f\in L^2(\X)$ with bounded	support and all $x\notin\,$supp$\,f$, where the kernel $K(x,y)$ satisfies the following conditions:
		\begin{enumerate}[\rm (i)]
			\item there exists a positive constant $C$ such that for all $x\ne y$,
			\begin{equation}\label{Calderon-Zygmund 1}
				|K(x,y)|\leq \frac{C}{\mu(B(x,d(x,y)))} \omega_1\left(\left(1+ \frac{d(x,y)}{\rho(x)}\right)^{-1}\right);
			\end{equation}
		
			\item for all $x,x',y\in\X$ with $d(x,x')\leq \frac{d(x,y)}{2\kappa}$,
			\begin{equation}\label{Calderon-Zygmund 2}
				|K(x,y)-K(x',y)|+|K(y,x)-K(y,x')|\leq \frac{C}{\mu(B(x,d(x,y)))} \omega_2\left(\frac{d(x,x')}{d(x,y)}\right).
			\end{equation}			
		\end{enumerate}
	\end{definition}

	\begin{remark}\label{17:15, 24/10/2023}
		\begin{enumerate}[\rm (i)]
			\item The log-Dini condition \eqref{10:09, 31/10/2023} implies the Dini condition \eqref{10:08, 31/10/2023}. 
			
			\item\label{17:16, 24/10/2023} Let $k_0$ be as in Definition \ref{10:09, 30/11/2023} and $\omega_{1,k_0}(t):= \omega_1(t^{\frac{1}{k_0+1}})$ for all $t\in [0,\infty)$, then
			$$\|\omega_{1,k_0}\|_{\text{Dini}}= (k_0+1)\|\omega_{1}\|_{\text{Dini}}$$
			and
			$$\|\omega_{1,k_0}\|_{\text{log-Dini}}= (k_0+1)^2\|\omega_{1}\|_{\text{log-Dini}}.$$
			Moreover, by Remark \ref{11:36, 02/11/2023}, we see that if $T$ is an $(\omega_1,\omega_2)_\rho$-Calder\'on-Zygmund operator, then $T^*$ is an $(\omega_{1,k_0},\omega_2)_\rho$-Calder\'on-Zygmund operator.
			
			\item The condition \eqref{Calderon-Zygmund 2} implies the standard H\"ormander condition. More precisely, there exist two constants $0<C_1<C_2$ such that for all $y, z\in\X$,
			\begin{align*}
				&\int_{d(x,z)>2\kappa d(y,z)}\left(\left|K(x,y)-K(x,z)\right|+ \left|K(y,x)-K(z,x)\right|\right)d\mu(x)\\
				&\hskip6cm \leq C_1 \sum_{j=1}^{\infty}\omega_2((2\kappa)^{-j})\leq C_2 \|\omega_2\|_{\text{Dini}}<\infty.
			\end{align*}
			Therefore, $T$ and $T^*$ are of weak type $(1,1)$, and thus are bounded on $L^q(\X)$ for every $q\in (1,\infty)$.
			
			\item When $\X=\R^n$ and $\rho=1$, if $T$ is an inhomogeneous $(\varepsilon,\delta)$-Calder\'on-Zygmund operator as in \cite{BL, DLPV23, DHZ} for some $\varepsilon\in (0,\infty)$ and $\delta\in (0,1]$, then $T$ is a generalized $(\omega_1,\omega_2)_\rho$-Calder\'on-Zygmund of log-Dini type associated to $\omega_1(t)= (\alpha_1+\log\frac{1}{t})^{-\alpha_1}\chi_{(0,1]}(t)+\alpha_1^{-\alpha_1}\chi_{(1,\infty)}(t)$ and $\omega_2(t)= (\alpha_2+\log\frac{1}{t})^{-\alpha_2}\chi_{(0,1]}(t)+\alpha_2^{-\alpha_2}\chi_{(1,\infty)}(t)$ for any $\alpha_1,\alpha_2>2$. However, the reverse is not true in general. See also Remark \ref{21:02, 21/12/2023} for the case of the generalized inhomogeneous $(\varepsilon,\delta,s)$-Calder\'on-Zygmund operators studied in \cite{DLPV22}.
			
			\item When $\X=\R^n$ and $\rho=\rho_L$, if $T$ is a $\delta$-Schr\"odinger-Calder\'on-Zygmund operator as in \cite{MSTZ} for some $\delta\in (0,1]$, then $T$ is a generalized $(\omega_1,\omega_2)_\rho$-Calder\'on-Zygmund of log-Dini type associated to $\omega_1(t)= (\alpha_1+\log\frac{1}{t})^{-\alpha_1}\chi_{(0,1]}(t)+\alpha_1^{-\alpha_1}\chi_{(1,\infty)}(t)$ and $\omega_2(t)= (\alpha_2+\log\frac{1}{t})^{-\alpha_2}\chi_{(0,1]}(t)+\alpha_2^{-\alpha_2}\chi_{(1,\infty)}(t)$ for any $\alpha_1,\alpha_2>2$. However, the reverse is not true in general. See also Remark \ref{21:02, 21/12/2023} for the case of the generalized $(\delta,s)$-Schr\"odinger-Calder\'on-Zygmund operators studied in \cite{BPQ, BHQ, BDK, BLL}.
		\end{enumerate}	
	\end{remark}

	Now our first result can be stated as follows.
	
	\begin{theorem}\label{14:43, 02/11/2023}
			Let $T$ be an $(\omega_1,\omega_2)_\rho$-Calder\'on-Zygmund operator. Then:
		\begin{enumerate}[\rm (i)]
			\item $T$ is bounded from $H^1_\rho(\X)$ into $L^1(\X)$.
			
			\item $T$ is bounded from $H^1_\rho(\X)$ into $H^1(\X)$ if and only if $T^*1=0$ whenever $\omega_1$ and $\omega_2$ satisfy the log-Dini condition. 
		\end{enumerate}
	\end{theorem}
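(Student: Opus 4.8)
The plan is to reduce both assertions, via the atomic characterization of $H^1_\rho(\X)$ established in \cite{YZ} (see also \cite{BDK}), to uniform estimates on atoms. Such a decomposition writes $f\in H^1_\rho(\X)$ as $f=\sum_k\lambda_k a_k$ with $\sum_k|\lambda_k|\approx\|f\|_{H^1_\rho(\X)}$, where each $a_k$ is a $(1,2)_\rho$-atom supported in a ball $B_k=B(x_k,r_k)$ with $r_k\lesssim\rho(x_k)$ and $\|a_k\|_{L^2(\X)}\le\mu(B_k)^{-1/2}$, and where $\int_\X a_k\,d\mu=0$ unless $r_k$ is comparable to $\rho(x_k)$; I shall call the atoms of the first kind \emph{local atoms} and those of the second kind \emph{$\rho$-atoms}. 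Since $T$ and $T^*$ are bounded on $L^2(\X)$ and of weak type $(1,1)$ (see Remark~\ref{17:15, 24/10/2023}), it suffices to prove $\|Ta\|_{L^1(\X)}\lesssim1$ for (i), and $\|Ta\|_{H^1(\X)}\lesssim1$ for (ii), uniformly over $(1,2)_\rho$-atoms $a$.

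For (i), fix an atom $a$ supported in $B=B(x_0,r)$. On $2\kappa B$ (enlarging this dilate by a fixed factor if needed so that \eqref{Calderon-Zygmund 2} applies on the complement), H\"older's inequality and the $L^2$-boundedness of $T$ give $\|Ta\|_{L^1(2\kappa B)}\le\mu(2\kappa B)^{1/2}\|Ta\|_{L^2(\X)}\lesssim1$. On $\X\setminus2\kappa B$ I split into the annuli $E_j=(2\kappa)^{j+1}B\setminus(2\kappa)^jB$, $j\ge1$. If $a$ is a local atom, its cancellation lets me write $Ta(x)=\int_B\bigl(K(x,y)-K(x,x_0)\bigr)a(y)\,d\mu(y)$, and \eqref{Calderon-Zygmund 2} yields $|Ta(x)|\lesssim\mu((2\kappa)^jB)^{-1}\omega_2((2\kappa)^{-j})$ on $E_j$, whence $\|Ta\|_{L^1(\X\setminus2\kappa B)}\lesssim\sum_{j\ge1}\omega_2((2\kappa)^{-j})\lesssim\|\omega_2\|_{\text{Dini}}$. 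If $a$ is a $\rho$-atom, then $r\approx\rho(x_0)\approx\rho(y)$ for $y\in B$, and I use the size bound \eqref{Calderon-Zygmund 1} together with the admissibility of $\rho$ (Definition~\ref{10:09, 30/11/2023}), which for $d(x,x_0)\gtrsim\rho(x_0)$ gives $1+d(x,x_0)/\rho(x)\gtrsim\bigl(1+d(x,x_0)/\rho(x_0)\bigr)^{1/(k_0+1)}$; hence $\omega_1\bigl((1+d(x,y)/\rho(x))^{-1}\bigr)\lesssim\omega_{1,k_0}((2\kappa)^{-j})$ on $E_j$, and summing gives $\|Ta\|_{L^1(\X\setminus2\kappa B)}\lesssim\sum_{j\ge1}\omega_{1,k_0}((2\kappa)^{-j})\lesssim\|\omega_{1,k_0}\|_{\text{Dini}}=(k_0+1)\|\omega_1\|_{\text{Dini}}$, with $\omega_{1,k_0}$ as in Remark~\ref{17:15, 24/10/2023}(\ref{17:16, 24/10/2023}). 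Thus $\|Ta\|_{L^1(\X)}\lesssim1$, proving (i).

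For (ii), necessity is immediate: if $T:H^1_\rho(\X)\to H^1(\X)$ is bounded, then $Ta\in H^1(\X)$ for every atom $a$, so $\int_\X Ta\,d\mu=0$; since $T^*$ is an operator of the same type (Remark~\ref{17:15, 24/10/2023}(\ref{17:16, 24/10/2023})) it maps $L^\infty$ into $BMO_\rho(\X)=(H^1_\rho(\X))^*$, and $\int_\X Ta\,d\mu=\langle a,T^*1\rangle$, so the vanishing of this pairing on the dense span of atoms forces $T^*1=0$. For sufficiency, assume $T^*1=0$ and that $\omega_1,\omega_2$ satisfy the log-Dini condition \eqref{10:09, 31/10/2023}. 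Fix an atom $a$ supported in $B=B(x_0,r)$; with $Q_j=(2\kappa)^{j+1}B$, $E_0=2\kappa B$, and $E_j$ as above, set $g_j=(Ta)\chi_{E_j}$, $c_j=\int_\X g_j\,d\mu$, $S_j=\sum_{i\ge j}c_i$, and $\eta_0=1$, $\eta_j=\omega_2((2\kappa)^{-j})$ (local atom) or $\eta_j=\omega_{1,k_0}((2\kappa)^{-j})$ ($\rho$-atom) for $j\ge1$. The estimates of part (i) give $|c_j|\lesssim\eta_j$ and $\|g_j\|_{L^2(\X)}\lesssim\eta_j\mu(Q_j)^{-1/2}$, while $S_0=\int_\X Ta\,d\mu=\langle a,T^*1\rangle=0$. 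Now telescope: setting
\begin{equation*}
h_j:=g_j+\frac{S_{j+1}}{\mu(Q_j)}\chi_{Q_j}-\frac{S_j}{\mu(Q_{j-1})}\chi_{Q_{j-1}}\qquad(j\ge0,\ Q_{-1}:=\emptyset),
\end{equation*}
one checks that $h_j$ is supported in $Q_j$, that $\int_\X h_j\,d\mu=c_j+S_{j+1}-S_j=0$, that $\sum_{j\ge0}h_j=Ta$ in $L^1(\X)$ (using $S_0=0$ and $S_j\to0$), and that $\|h_j\|_{L^2(\X)}\lesssim\Lambda_j\mu(Q_j)^{-1/2}$ with $\Lambda_j:=\sum_{i\ge j}\eta_i$. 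Hence each $h_j/(C\Lambda_j)$ is a $(1,2)$-atom of $H^1(\X)$, and
\begin{equation*}
\|Ta\|_{H^1(\X)}\lesssim\sum_{j\ge0}\Lambda_j=\sum_{i\ge0}(i+1)\eta_i\lesssim1+\|\omega_2\|_{\text{log-Dini}}+\|\omega_{1,k_0}\|_{\text{log-Dini}}<\infty,
\end{equation*}
where the last step uses the elementary reformulation of \eqref{10:09, 31/10/2023} as $\sum_{i\ge1}(i+1)\omega((2\kappa)^{-i})<\infty$ and Remark~\ref{17:15, 24/10/2023}(\ref{17:16, 24/10/2023}). Summing over the atomic decomposition yields $\|Tf\|_{H^1(\X)}\lesssim\|f\|_{H^1_\rho(\X)}$.

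The crux, and the reason the \emph{log}-Dini condition is needed for (ii) while the bare Dini condition suffices for (i), is the telescoping step: the dyadic pieces $g_j$ of $Ta$ do not individually integrate to zero, so restoring cancellation forces one to redistribute the correction mass $|S_{j+1}|\le\sum_{i>j}\eta_i$ across all scales, and the resulting $H^1(\X)$-bound is governed by $\sum_j\Lambda_j=\sum_i(i+1)\eta_i$, which carries exactly one extra logarithmic factor relative to the bare sum $\sum_i\eta_i$ of part (i). The two remaining points requiring care are the handling of the non-cancellative $\rho$-atoms, where the admissibility of $\rho$ must be invoked to convert the $\rho(x)$ appearing in \eqref{Calderon-Zygmund 1} into $\rho(x_0)$ at the price of replacing $\omega_1$ by $\omega_{1,k_0}$, and the innermost annulus $E_0$, where only an $L^2$ bound is available so that one must work throughout with $(1,2)$-atoms rather than $L^\infty$-normalized ones; the case $\mu(\X)<\infty$, should it arise, needs only the standard minor modifications.
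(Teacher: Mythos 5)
Your proof is correct, and part (i) follows essentially the paper's own route: both reduce to a uniform $L^1$ bound over $(H^1_\rho,2)$-atoms, use $L^2$-boundedness on the core $2\kappa B$, the regularity condition \eqref{Calderon-Zygmund 2} on the outer annuli for cancellative atoms, and the size condition \eqref{Calderon-Zygmund 1} combined with the admissibility of $\rho$ (at the cost of replacing $\omega_1$ by $\omega_{1,k_0}$) for atoms with $r\sim\rho(x_0)$; this is exactly the content of Lemma \ref{14:58, 06/10/2023}\eqref{08:35, 18/10/2023}. For the sufficiency in part (ii), however, you take a genuinely different route. The paper dualizes: invoking \cite[Theorem (4.1)]{CW}, it reduces matters to showing $\left|\int_\X g\,T\mathfrak a\,d\mu\right|\lesssim\|g\|_{BMO}$ for $g\in C_c(\X)$, which after using $T^*1=0$ to replace $g$ by $g-g_B$ follows from Lemma \ref{14:58, 06/10/2023}\eqref{09:59, 15/10/2023}; there the log-Dini condition enters through the factor $(j+1)$ coming from the oscillation growth in Lemma \ref{14:51, 06/10/2023}. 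You instead construct an explicit $H^1(\X)$-atomic decomposition of $T\mathfrak a$ by telescoping the tail masses $S_j$ across the annuli (the classical argument that a molecule belongs to $H^1$), and the log-Dini condition enters as $\sum_j\Lambda_j=\sum_i(i+1)\eta_i$; the two mechanisms are of exactly the same strength. Your construction is more self-contained, since it sidesteps the weak-$*$/density issues hidden in the duality step, whereas the paper's route recycles Lemma \ref{14:58, 06/10/2023}\eqref{09:59, 15/10/2023}, which it needs anyway for Theorem \ref{18:02, 10/10/2023}. You also write out the (easy) necessity direction of (ii), which the paper leaves implicit. The only cosmetic blemishes are the $0/0$ convention in $h_0$ (harmless since $S_0=0$) and the passing remark about $\mu(\X)<\infty$, which is moot because the paper assumes $\mu(\X)=\infty$ throughout.
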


	From Theorem \ref{14:43, 02/11/2023} and Remark \ref{17:15, 24/10/2023}\eqref{17:16, 24/10/2023}, we obtain:

	\begin{corollary}\label{12:49, 04/12/2023}
		Under the same hypothesis as in Theorem \ref{14:43, 02/11/2023}. Then:
		\begin{enumerate}[\rm (i)]
			\item\label{12:50, 04/12/2023} \begin{enumerate}[$\bullet$]
				\item $T^*$ is bounded from $H^1_\rho(\X)$ into $L^1(\X)$;
				
				\item $T$ and $T^*$ are bounded from $L^\infty(\X)$ into $BMO_\rho(\X)$.
			\end{enumerate}
			
			\item \begin{enumerate}[$\bullet$]
				\item $T$ is bounded from $BMO(\X)$ into $BMO_\rho(\X)$ if and only if $T1=0$;
				
				\item $T^*$ is bounded from $BMO(\X)$ into $BMO_\rho(\X)$ if and only if $T^*1=0$
			\end{enumerate} 
			whenever $\omega_1$ and $\omega_2$ satisfy the log-Dini condition.
		\end{enumerate}
	\end{corollary}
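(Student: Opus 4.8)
The plan is to derive the whole corollary from Theorem~\ref{14:43, 02/11/2023} by duality, using two auxiliary facts. The first is Remark~\ref{17:15, 24/10/2023}\eqref{17:16, 24/10/2023}: $T^*$ is again a Calder\'on--Zygmund operator of the same kind, namely an $(\omega_{1,k_0},\omega_2)_\rho$-operator, and $\omega_{1,k_0}$ inherits the Dini, resp.\ log-Dini, property from $\omega_1$. The second is the pair of dualities $(H^1_\rho(\X))^*=BMO_\rho(\X)$ (Yang--Zhou \cite{YZ}) and $(H^1(\X))^*=BMO(\X)$ (Coifman--Weiss \cite{CW}). Since $T$ and $T^*$ are of weak type $(1,1)$ and bounded on $L^q(\X)$ for $1<q<\infty$ (Remark~\ref{17:15, 24/10/2023}(iii)), they extend in the standard way to $L^\infty(\X)$ and to $BMO(\X)$ modulo constants; on a dense class of nice functions the Banach-space adjoints taken relative to the above pairings agree with the kernel adjoints (by Fubini), so I may transfer boundedness back and forth between an operator and its adjoint. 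I would record all of this at the outset.

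For part~(i): first apply Theorem~\ref{14:43, 02/11/2023}(i) with $T^*$ in place of $T$ --- legitimate since $T^*$ is an $(\omega_{1,k_0},\omega_2)_\rho$-operator and $\omega_{1,k_0},\omega_2$ are Dini --- to obtain that $T^*$ is bounded from $H^1_\rho(\X)$ into $L^1(\X)$, the first bullet. Then dualize: Theorem~\ref{14:43, 02/11/2023}(i) itself says $T\colon H^1_\rho(\X)\to L^1(\X)$ is bounded, so that $T^*\colon L^\infty(\X)\to BMO_\rho(\X)$ is bounded; and the first bullet says $T^*\colon H^1_\rho(\X)\to L^1(\X)$ is bounded, so that $(T^*)^*=T\colon L^\infty(\X)\to BMO_\rho(\X)$ is bounded. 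That is the second bullet.

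For part~(ii), assume also that $\omega_1,\omega_2$ satisfy the log-Dini condition, so $\omega_{1,k_0}$ does too. The crux is the equivalence, valid for every $(\omega_1,\omega_2)_\rho$-Calder\'on--Zygmund operator $S$ with log-Dini moduli,
\[
S^*\colon BMO(\X)\to BMO_\rho(\X)\ \text{bounded}\iff S^*1=0 .
\]
For ``$\Leftarrow$'': if $S^*1=0$ then Theorem~\ref{14:43, 02/11/2023}(ii) gives that $S\colon H^1_\rho(\X)\to H^1(\X)$ is bounded, hence so is its adjoint $S^*\colon BMO(\X)\to BMO_\rho(\X)$. For ``$\Rightarrow$'': the constant $1$ has zero $BMO(\X)$-seminorm, so boundedness of $S^*$ forces $\|S^*1\|_{BMO_\rho(\X)}=0$, i.e.\ $S^*1=0$. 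Applying this with $S=T$ gives the second bullet of~(ii). Applying it with $S=T^*$ --- an $(\omega_{1,k_0},\omega_2)_\rho$-operator with log-Dini moduli --- gives that $(T^*)^*=T\colon BMO(\X)\to BMO_\rho(\X)$ is bounded iff $(T^*)^*1=T1=0$, the first bullet.

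The points that need care are the identification of the Banach adjoints with the kernel adjoints on a dense subspace (so that the conclusions of Theorem~\ref{14:43, 02/11/2023} really transfer to the adjoint operators) and fixing the meaning of ``$T^*1=0$'' on $L^\infty$ to be the one used in Theorem~\ref{14:43, 02/11/2023}, which part~(i) makes legitimate. What might have looked like the main obstacle --- upgrading boundedness of $S^*$ on $BMO(\X)$ to boundedness of $S$ on $H^1(\X)$, seemingly requiring a passage through second duals --- does not actually arise: $BMO$-boundedness of $S^*$ yields the cancellation $S^*1=0$ directly, and then Theorem~\ref{14:43, 02/11/2023}(ii) supplies the $H^1$-estimate.
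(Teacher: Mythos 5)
Your proposal is correct and follows precisely the route the paper intends: the paper offers no separate argument for this corollary, deriving it in one line ``from Theorem~\ref{14:43, 02/11/2023} and Remark~\ref{17:15, 24/10/2023}\eqref{17:16, 24/10/2023}'', i.e.\ by applying the theorem to $T^*$ (which the remark guarantees is again an $(\omega_{1,k_0},\omega_2)_\rho$-operator whose moduli inherit the Dini, resp.\ log-Dini, property) and then dualizing via $(H^1_\rho(\X))^*=BMO_\rho(\X)$ and $(H^1(\X))^*=BMO(\X)$. The details you supply --- that the ``only if'' directions follow because $\|1\|_{BMO}=0$ while $\|\cdot\|_{BMO_\rho}$ is a genuine norm, and that the Banach-space adjoints must be identified with the kernel adjoints on a dense class --- are exactly the points the paper leaves implicit.
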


	Following Bongioanni, Harboure and Salinas \cite{BHS}, we say that a function $f\in L^1_{\rm loc}(\X)$ is in $BMO^{\log}_\rho(\X)$ if
	$$\|f\|_{BMO^{\log}_\rho}=\sup_{B} \frac{\log(e+\frac{\rho(x_0)}{r})}{\mu(B)} \int_B \left|f(x)-\frac{1}{\mu(B)}\int_B f(y)d\mu(y)\right| d\mu(x)<\infty,$$
	where the supremum is taken over all balls $B=B(x_0,r)\subset\X$.

	Our main result is the following theorem.
	
	\begin{theorem}\label{18:02, 10/10/2023}
		Let $q\in (1,\infty]$ and $T$ be an $(\omega_1,\omega_2)_\rho$-Calder\'on-Zygmund operator with $\omega_1$ and $\omega_2$ satisfying the log-Dini condition. Then, the following three statements are equivalent:
		\begin{enumerate}[\rm (i)]
			\item\label{17:59, 10/10/2023} $T$ is bounded on $H^1_\rho(\X)$;
			
			\item\label{18:01, 10/10/2023} There exists a positive constant $C$ such that
			$$\left|\langle T^*1,\a\rangle\right|=\left|\int_{\X}T\a(x)d\mu(x)\right|\leq  \frac{C}{\log\left(e+\frac{\rho(x_0)}{r}\right)}$$
			for all $(H^1_\rho,q)$-atoms $\a$ related to the balls $B=B(x_0,r)$;
			
			\item\label{18:00, 10/10/2023} $T^*1\in BMO_\rho^{\log}(\X)$, that is, there exists a positive constant $C$ such that		
			$$\frac{1}{\mu(B)}\int_{B} \left|T^*1(x)-\frac{1}{\mu(B)}\int_B T^*1(y)d\mu(y)\right|d\mu(x)\leq  \frac{C}{\log\left(e+\frac{\rho(x_0)}{r}\right)}$$		
			for all balls $B=B(x_0,r)$.		
		\end{enumerate}
	\end{theorem}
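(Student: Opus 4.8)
My plan is to prove the cycle (i)$\Rightarrow$(ii)$\Rightarrow$(iii)$\Rightarrow$(i). Throughout I use that, by Corollary \ref{12:49, 04/12/2023}, $T^*1$ is a well-defined element of $BMO_\rho(\X)\subset L^1_{\rm loc}(\X)$; that $\langle T^*1,\a\rangle=\int_\X T\a\,d\mu$ for every atom $\a$, the integral being absolutely convergent by Theorem \ref{14:43, 02/11/2023}(i); and that every $(H^1_\rho,q)$-atom satisfies $\|\a\|_{H^1_\rho(\X)}\lesssim 1$. Also, for an atom on a ball of radius $r\gtrsim\rho(x_0)$ (so without cancellation) one has $\log(e+\rho(x_0)/r)\sim 1$ and $|\langle T^*1,\a\rangle|\le\|T\a\|_{L^1(\X)}\lesssim 1$ by Theorem \ref{14:43, 02/11/2023}(i); hence in each implication only cancellation atoms on small balls $B=B(x_0,r)$, $r<\rho(x_0)$, need to be considered.

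For (ii)$\Rightarrow$(iii): given $B=B(x_0,r)$, put $c=\frac{1}{\mu(B)}\int_B T^*1\,d\mu$ and $h=(\sign(T^*1-c))\chi_B$, and set $\a_B=\frac{1}{2\mu(B)}\bigl(h-\frac{1}{\mu(B)}\int_B h\,d\mu\bigr)$. Then $\a_B$ is an $(H^1_\rho,\infty)$-atom (hence an $(H^1_\rho,q)$-atom) on $B$, and a direct computation gives $\langle T^*1,\a_B\rangle=\frac{1}{2\mu(B)}\int_B|T^*1-c|\,d\mu$, so (ii) applied to $\a_B$ is exactly the inequality in (iii). For (iii)$\Rightarrow$(i): I first upgrade (iii) to the $q$-averaged statement (ii). Since $\rho$ is slowly varying, on $B=B(x_0,r)$ and its subballs of radius $\le r$ the quantity $\log(e+\rho(\cdot)/(\cdot))$ is comparable to $L:=\log(e+\rho(x_0)/r)$, so (iii) together with the (localized) John--Nirenberg inequality gives $\|(T^*1-c)\chi_B\|_{L^{q'}(\X)}\lesssim\mu(B)^{1/q'}/L$, whence, by the cancellation of $\a$ and H\"older, $|\langle T^*1,\a\rangle|=|\langle T^*1-c,\a\rangle|\le\|\a\|_{L^q}\|(T^*1-c)\chi_B\|_{L^{q'}}\lesssim 1/L$, i.e.\ (ii). It then suffices to prove $\|T\a\|_{H^1_\rho(\X)}\lesssim 1$ uniformly in atoms $\a$: combined with the $H^1_\rho\to L^1$ boundedness (Theorem \ref{14:43, 02/11/2023}(i)) and the convergence of the $H^1_\rho$ atomic decomposition in $H^1_\rho(\X)$ (hence in $L^1(\X)$), this yields boundedness of $T$ on $H^1_\rho(\X)$. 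For a cancellation atom on $B=B(x_0,r)$, $r<\rho(x_0)$, the smoothness estimate \eqref{Calderon-Zygmund 2} gives the molecular bound $|T\a(x)|\lesssim\mu(B(x_0,d(x,x_0)))^{-1}\omega_2(r/d(x,x_0))$ for $x\notin 2\kappa B$; the standard ``molecule into atoms'' telescoping carried up to scale $\rho(x_0)$ then writes $T\a$ as a sum of cancellation atoms, with coefficients summing to $\lesssim\|\omega_2\|_{\text{Dini}}$, plus ``carry'' terms whose coefficients sum to $\lesssim L\,|\int_\X T\a\,d\mu|+\|\omega_2\|_{\text{log-Dini}}$, which is $\lesssim 1$ by (ii) and the log-Dini condition. (An atom on a ball of radius $\gtrsim\rho(x_0)$ is handled by the size estimate \eqref{Calderon-Zygmund 1}, the $L^q$-boundedness of $T$, and the admissibility of $\rho$, which converts the decay $\omega_1((1+d(x,y)/\rho(x))^{-1})$ into $\omega_{1,k_0}$-decay in $d(x,x_0)/\rho(x_0)$; this uses only the Dini condition.)

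The main obstacle is (i)$\Rightarrow$(ii). Fix a cancellation $(H^1_\rho,q)$-atom $\a$ on $B=B(x_0,r)$ with $r<\rho(x_0)$, and write $\rho_0=\rho(x_0)$, $\sigma=\int_\X T\a\,d\mu=\langle T^*1,\a\rangle$, $m\sim\log(\rho_0/r)$; the goal is $|\sigma|\lesssim 1/m$. The crux is a reverse inequality $\|T\a\|_{H^1_\rho(\X)}\gtrsim m\,|\sigma|$ valid in the regime $|\sigma|\ge C_1/m$: since (i) gives $\|T\a\|_{H^1_\rho(\X)}\le\|T\|_{H^1_\rho\to H^1_\rho}\|\a\|_{H^1_\rho(\X)}\lesssim 1$, this forces $|\sigma|\lesssim 1/m$. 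To prove the lower bound I would use the grand maximal function characterization of $H^1_\rho(\X)$ and the reverse doubling property of $\mu$. Write $T\a=u+v$ with $u=(T\a)\chi_{B(x_0,Ar)}$, $v=T\a-u$; the cancellation of $\a$ and \eqref{Calderon-Zygmund 2} give $\bigl|\int_{B(x_0,t)}v\,d\mu\bigr|\lesssim\sum_{2^\ell\ge A}\omega_2(2^{-\ell})=:\mathrm{err}(A)$ for all $t\le\rho_0$, while the smoothness of a test function adapted to $B(x,t)$ controls the ``spreading'' error of $u$; hence for $x$ with $d(x,x_0)\lesssim t$ and $Ar\lesssim t\le\rho_0$ one obtains $\M_\rho(T\a)(x)\gtrsim(|\sigma|-\mathrm{err}(A))\,\mu(B(x_0,t))^{-1}$. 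Integrating over $B(x_0,\rho_0)$ and summing over dyadic scales $t$, reverse doubling makes each dyadic annulus contribute $\gtrsim 1$, so $\|T\a\|_{H^1_\rho(\X)}\gtrsim(|\sigma|-\mathrm{err}(A))\cdot(\#\text{ admissible scales})$.

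The delicate point --- and where the \emph{log}-Dini hypothesis, rather than just Dini, is genuinely needed --- is the bookkeeping between $A$ and $|\sigma|$. One needs $\mathrm{err}(A)\le|\sigma|/4$, and by log-Dini $\mathrm{err}(A)=\sum_{2^\ell\ge A}\omega_2(2^{-\ell})\lesssim\|\omega_2\|_{\text{log-Dini}}/\log A$, so it suffices to take $\log A\sim 1/|\sigma|$; in the regime $|\sigma|\ge C_1/m$ this removes only $O(1/|\sigma|)=o(m)$ scales, and after also discarding the $O(1)$ smallest scales (needed to absorb the spreading error of $u$) one is left with $\gtrsim m$ admissible scales, giving $\|T\a\|_{H^1_\rho(\X)}\gtrsim m\,|\sigma|$ and closing the argument. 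The complementary regime $|\sigma|<C_1/m$ is already the conclusion. I expect this adaptive scale-counting, together with the careful verification of the grand-maximal lower bound with all its error terms, to be the technically heaviest part of the proof.
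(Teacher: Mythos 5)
Your cycle (i)$\Rightarrow$(ii)$\Rightarrow$(iii)$\Rightarrow$(i) is sound, and two of the three legs are close to the paper's: your (ii)$\Rightarrow$(iii) via the sign-atom $\a_B=\frac{1}{2\mu(B)}(h-h_B)\chi_B$ is exactly the atom the paper builds inside Lemma \ref{07:01, 12/10/2023}, only used more directly (the paper instead routes through the equivalence ``$f\in BMO^{\log}_\rho$ iff $\a(f-f_B)\in H^1_\rho$ uniformly''), and your (iii)$\Rightarrow$(i) — upgrade (iii) to a $q'$-averaged bound by John--Nirenberg (this is the paper's Lemma \ref{08:21, 15/10/2023}), deduce $|\int T\a|\lesssim 1/L$, and conclude that $T\a$ is a uniform log-type molecule — is the paper's argument, with your explicit telescoping playing the role of the duality proof of Lemma \ref{07:01, 14/10/2023}. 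Where you genuinely diverge is (i)$\Rightarrow$(ii). The paper does this in four lines with the single function $g_{x_0}(x)=\max\{0,1+\log\frac{\rho(x_0)}{d(x,x_0)}\}$, which satisfies $\|g_{x_0}\|_{BMO_\rho}\lesssim 1$ and $g_{x_0}\geq\frac12\log(e+\rho(x_0)/r)$ on $B(x_0,r)$ (Lemma \ref{17:37, 11/10/2023}): writing $L|\int T\a|\lesssim|(g_{x_0})_B||\int T\a|\leq|\int T\a\,g_{x_0}|+|\int T\a\,(g_{x_0}-(g_{x_0})_B)|$, the first term is controlled by $\|T\|_{H^1_\rho\to H^1_\rho}\|g_{x_0}\|_{BMO_\rho}$ and the second by Lemma \ref{14:58, 06/10/2023}(ii). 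Your alternative — a quantitative lower bound $\|T\a\|_{H^1_\rho}\gtrsim m|\sigma|$ obtained from the grand maximal function, reverse doubling, and an adaptive truncation radius $A$ with $\log A\sim1/|\sigma|$ — is a correct strategy (it is the scale-counting argument behind $\|\chi_B/\mu(B)\|_{H^1_\rho}\sim\log(e+\rho(x_0)/r)$, and your use of log-Dini to get $\mathrm{err}(A)\lesssim\|\omega_2\|_{\text{log-Dini}}/\log A$ is exactly right), but it is far heavier, requires constructing admissible test functions in $\G_0^\epsilon(\beta,\gamma)$ on a general RD-space, and one bookkeeping detail is off: absorbing the spreading error $(Ar/t)^\beta\leq|\sigma|/4$ forces you to discard $O(\beta^{-1}\log(1/|\sigma|))$ scales, not $O(1)$ — still $o(m)$ in the regime $|\sigma|\geq C_1/m$, so the argument survives, but the count should be corrected. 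The moral difference is that the paper encodes all the scale-counting once and for all in the single $BMO_\rho$ estimate for $g_{x_0}$ (Proposition \ref{14:47, 18/10/2023}) and then reuses it by duality in both (i)$\Rightarrow$(ii) and (iii)$\Rightarrow$(i), whereas your approach redoes it by hand at the level of the maximal function.
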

	
	From Theorem \ref{18:02, 10/10/2023} and Remark \ref{17:15, 24/10/2023}\eqref{17:16, 24/10/2023}, we obtain:
	
	\begin{corollary}
		Under the same hypothesis as in Theorem \ref{18:02, 10/10/2023}. Then, the following three statements are equivalent:
		\begin{enumerate}[\rm (i)]
			\item $T$ is bounded on $BMO_\rho(\X)$;
			
			\item There exists a positive constant $C$ such that
			$$\left|\langle T1,\a\rangle\right|=\left|\int_{\X} T^*\a(x)d\mu(x)\right|\leq  \frac{C}{\log\left(e+\frac{\rho(x_0)}{r}\right)}$$
			for all $(H^1_\rho,q)$-atoms $\a$ related to the balls $B=B(x_0,r)$;
			
			\item $T1\in BMO_\rho^{\log}(\X)$, that is, there exists a positive constant $C$ such that		
			$$\frac{1}{\mu(B)}\int_{B} \left|T1(x)-\frac{1}{\mu(B)}\int_B T1(y)d\mu(y)\right|d\mu(x)\leq  \frac{C}{\log\left(e+\frac{\rho(x_0)}{r}\right)}$$		
			for all balls $B=B(x_0,r)$.		
		\end{enumerate}
	\end{corollary}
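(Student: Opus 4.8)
The plan is to derive this Corollary from Theorem~\ref{18:02, 10/10/2023} by applying that theorem to the adjoint operator $T^*$ in place of $T$. First, by Remark~\ref{17:15, 24/10/2023}\eqref{17:16, 24/10/2023}, $T^*$ is an $(\omega_{1,k_0},\omega_2)_\rho$-Calder\'on-Zygmund operator, and the norm identities recorded there show that $\omega_{1,k_0}$ still satisfies the log-Dini condition (since $\omega_1$ does), while $\omega_2$ is unchanged; hence Theorem~\ref{18:02, 10/10/2023} applies verbatim to $S:=T^*$ with the same exponent $q$ (the class of $(H^1_\rho,q)$-atoms does not depend on the operator).

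Second, I would unwind the conclusion of Theorem~\ref{18:02, 10/10/2023} for $S=T^*$. Since $T$ is bounded on $L^2(\X)$ we have $S^*=(T^*)^*=T$, so the three equivalent statements of Theorem~\ref{18:02, 10/10/2023} for $S$ become: (i$'$) $T^*$ is bounded on $H^1_\rho(\X)$; (ii$'$) $\big|\langle T1,\a\rangle\big|=\big|\int_\X T^*\a\,d\mu\big|\le C/\log(e+\rho(x_0)/r)$ for all $(H^1_\rho,q)$-atoms $\a$ related to $B(x_0,r)$; (iii$'$) $T1\in BMO_\rho^{\log}(\X)$. Here $T1$ is a well-defined element of $BMO_\rho(\X)$ by Corollary~\ref{12:49, 04/12/2023}, and $\langle T1,\a\rangle=\int_\X T^*\a\,d\mu$ because $T^*\a\in L^1(\X)$ (Corollary~\ref{12:49, 04/12/2023}) and $\a$ carries enough cancellation, exactly as in the treatment of $T^*1$ underlying the proof of Theorem~\ref{18:02, 10/10/2023}. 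Statements (ii$'$) and (iii$'$) are precisely (ii) and (iii) of the Corollary, so (ii)$\Leftrightarrow$(iii) follows at once, and the whole Corollary reduces to identifying (i$'$) with statement (i), i.e.\ with ``$T$ is bounded on $BMO_\rho(\X)$''.

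Third, this identification is a duality statement and is the only step requiring genuine care. Using $\big(H^1_\rho(\X)\big)^*=BMO_\rho(\X)$ from \cite{YZ}: if $T^*$ is bounded on $H^1_\rho(\X)$, then its Banach-space adjoint is bounded on $\big(H^1_\rho(\X)\big)^*=BMO_\rho(\X)$, and testing against atoms together with Fubini (via the kernel representation of $T$ on $BMO_\rho(\X)$ modulo constants) shows that this adjoint coincides with $T$. For the converse I would use the predual characterization of $H^1_\rho(\X)$, i.e.\ that for $h\in H^1_\rho(\X)$ one has $\|h\|_{H^1_\rho}\approx\sup\{|\langle h,g\rangle|:\ g\in BMO_\rho(\X),\ \|g\|_{BMO_\rho}\le1\}$ (equivalently, $H^1_\rho(\X)$ is the dual of the corresponding $VMO_\rho$-space; see \cite{YZ}): if $T$ is bounded on $BMO_\rho(\X)$, then for a finite linear combination $f$ of $(H^1_\rho,q)$-atoms (a dense subspace of $H^1_\rho(\X)$ on which $T^*f\in L^1(\X)$) and any $g\in BMO_\rho(\X)$ one has $|\langle T^*f,g\rangle|=|\langle f,Tg\rangle|\le\|T\|_{BMO_\rho\to BMO_\rho}\|f\|_{H^1_\rho}\|g\|_{BMO_\rho}$, and taking the supremum over $g$ yields $\|T^*f\|_{H^1_\rho}\lesssim\|f\|_{H^1_\rho}$; density finishes. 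The main obstacle is thus the bookkeeping in this duality argument — matching the abstract adjoint with the concrete operator and invoking the predual estimate for $H^1_\rho(\X)$ — all of which is routine given the machinery of \cite{YZ}; the substantive content of the Corollary is entirely carried by Theorem~\ref{18:02, 10/10/2023}.
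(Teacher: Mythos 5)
Your proposal is correct and follows exactly the route the paper intends: the corollary is stated as an immediate consequence of Theorem \ref{18:02, 10/10/2023} applied to $T^*$, which by Remark \ref{17:15, 24/10/2023}\eqref{17:16, 24/10/2023} is an $(\omega_{1,k_0},\omega_2)_\rho$-Calder\'on--Zygmund operator with $\omega_{1,k_0}$ still log-Dini, together with the duality $(H^1_\rho(\X))^*=BMO_\rho(\X)$ to translate boundedness of $T^*$ on $H^1_\rho(\X)$ into boundedness of $T$ on $BMO_\rho(\X)$. The duality bookkeeping you carry out explicitly is the standard argument the paper leaves implicit, so nothing is missing.
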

	
	\begin{remark}
		In order to study the boundedness of the $(\omega_1,\omega_2)_\rho$-Calder\'on-Zygmund operators on $H^1_\rho(\X)$, we need to introduce a new notion of molecules for $H^1_\rho(\X)$ (see Definition \ref{the definition for atoms and molecules of log-type}\eqref{09:28, 27/11/2023}). As an application, we obtain a new molecular characterization of $H^1_\rho(\X)$ (see Theorem \ref{the molecular characterization}). In Appendix, we shall also consider a wider class of the generalized $(\omega_1,\omega_2,s)_\rho$-Calder\'on-Zygmund operators with kernels satisfying another type	of regularity (see Definition \ref{20:36, 21/12/2023}). Our results generalize some recent results	in \cite{BDK, BLL, DLPV22, DLPV23, MSTZ}.
	\end{remark}

	The organization of the paper is as follows. In Section 2, we present some preliminaries on RD-spaces, admissible functions, Hardy type spaces, atoms and $BMO$ type spaces. In Section 3, we give a new molecular characterization of $H^1_\rho(\X)$ and some auxiliary lemmas. Section 4 is devoted to the proofs of the main theorems. Finally, in Section 5, we provide an appendix in which we state the main theorems for the generalized $(\omega_1,\omega_2,s)_\rho$-Calder\'on-Zygmund operators.
	
	 Throughout the whole paper, we always use $C$ to denote a positive constant that is independent of the main parameters involved, whose value may differ from line to line. If
	$f\leq C g$, we write $f\lesssim g$ and, if $f\lesssim g\lesssim f$, we then write $f\sim g$.  For any ball $B=B(x,r)$ and $t>0$, we use $tB:=B(x,tr)$. For any set $E\subset \X$, we denote by $\chi_E$ its characteristic function. For any index $q\in [1,\infty]$, we also denote by $q'$ its conjugate index, that is, $\frac{1}{q}+\frac{1}{q'}=1$. Let $\mathbb{Z}^+:=\{1, 2,\ldots\}$ and $\mathbb{Z}^+_0:=\mathbb{Z}^+\cup\{0\}$.
	
	\section{Preliminaries}\label{11:15, 31/10/2023}
	
	Let $d$ be a quasi-metric on a set $\X$, that is, $d$ is a nonnegative function on $\mathcal X\times \mathcal X$ satisfying
	\begin{enumerate}[(a)]
		\item $d(x,y)=d(y,x)$ for all $x, y\in \X$,
		\item $d(x,y)>0$ if and only if $x\ne y$,
		\item there exists a constant $\kappa\geq 1$ such that for all $x,y,z\in \mathcal X$,
		\begin{equation}\label{16:19, 05/10/2023}
			d(x,z)\leq \kappa(d(x,y)+ d(y,z)).
		\end{equation}
	\end{enumerate}
	
	A trip $(\mathcal X, d,\mu)$ is called a space of homogeneous type in the sense of Coifman and Weiss \cite{CW} if $\mu$ is a regular Borel measure satisfying doubling property, i.e., there exists a constant $C>1$ such that for all $x\in \mathcal X$ and $r>0$,
	$$\mu(B(x,2r))\leq C \mu(B(x,r)).$$
	Following Han, M\"uller and Yang \cite{HMY}, a space of homogeneous type $(\mathcal X, d,\mu)$ is called an RD-space if $\mu$ also satisfies reverse doubling property, i.e., there exists a constant $C>1$ such that for all $x\in \mathcal X$ and $r>0$,
	$$\mu(B(x,2r))\geq C \mu(B(x,r)).$$	
	Remark that the trip $(\mathcal X, d,\mu)$ is an RD-space if and only if  there exist constants $0<\mathfrak d \leq \mathfrak n$ and $C_0> 1$ such that for all $x\in\mathcal X$, $r>0$ and $\lambda\geq 1$,
	\begin{equation}\label{16:53, 14/10/2023}
		C_0^{-1} \lambda^{\mathfrak d} \mu(B(x,r)) \leq \mu(B(x,\lambda r))\leq C_0 \lambda^{\mathfrak n} \mu(B(x,r)).
	\end{equation}

	\begin{definition}\label{definition for test functions}
		Let $x_0\in\mathcal X$, $r>0$, $\beta\in (0,1]$ and $\gamma >0$. A function $\phi$ is said to belong to the space of test functions $\mathcal G(x_0,r,\beta,\gamma)$ if there exists a positive constant $C_\phi$ such that
		\begin{enumerate}[\rm (i)]
			\item $|\phi(x)| \leq  \frac{C_\phi}{\mu(B(x_0,r)) + \mu(B(x_0, d(x,x_0)))}\left(\frac{r}{r+ d(x,x_0)}\right)^\gamma$ for all $x\in\mathcal X$;
			
			\item $|\phi(x) - \phi(y)|\leq   \frac{C_\phi}{\mu(B(x_0,r)) + \mu(B(x_0, d(x,x_0)))}   \left(\frac{r}{r+ d(x,x_0)}\right)^\gamma \left(\frac{d(x,y)}{r+ d(x,x_0)}\right)^\beta$ for all $x,y\in \mathcal X$ satisfying that $d(x,y)\leq \frac{r + d(x,x_0)}{2\kappa}$.
			
		\end{enumerate}
		Moreover, for any $\phi\in \mathcal G(x_0,r,\beta,\gamma)$, we define its norm by
		$$\|\phi\|_{\mathcal G(x_0,r,\beta,\gamma)}:= \inf \left\{C_\phi: \mbox{{\rm (i)} and {\rm (ii)} hold}\right\}.$$
	\end{definition}

	In \cite{YZ}, Yang and Zhou introduced the notion of admissible functions as follows.

	\begin{definition}\label{10:09, 30/11/2023}
		A positive function $\rho$ on $\X$ is said to be admissible	if there exist positive constants $k_0$ and $c_0$ such that for all $x,y\in \X$,
		\begin{equation}\label{admissible function}
			\frac{1}{\rho(x)}\leq c_0 \frac{1}{\rho(y)}\left(1+\frac{d(x,y)}{\rho(y)}\right)^{k_0}.
		\end{equation}
	\end{definition}
	
	\begin{remark}\label{11:36, 02/11/2023}
		If $\rho$ is an admissible function on $\X$, then for all $x,y\in \X$,
		$$\left(1+\frac{d(x,y)}{\rho(y)}\right)^{-(k_0+1)}\lesssim \left(1+\frac{d(x,y)}{\rho(x)}\right)^{-1}\lesssim \left(1+\frac{d(x,y)}{\rho(y)}\right)^{-\frac{1}{k_0+1}}.$$
	\end{remark}
	
	Throughout the whole paper, we always assume that $\mathcal X$ is an RD-space with $\mu(\mathcal X)=\infty$, and $\rho$ is an admissible function on $\X$. Also we fix $x_0\in \X$.
	
	In Definition \ref{definition for test functions}, it is easy to see that $\mathcal G(x_0,1,\beta,\gamma)$ is a Banach space. Furthermore, for any  $x\in \mathcal X$ and $r>0$, we have $\mathcal G(x,r,\beta,\gamma)= \mathcal G(x_0,1,\beta,\gamma)$  with equivalent norms (but of course the constants are depending on $x$ and $r$). For simplicity, we write $\mathcal G(\beta,\gamma)$ instead of $\mathcal G(x_0,1,\beta,\gamma)$.
	
	Let $\epsilon\in (0,1]$ and $\beta,\gamma\in (0,\epsilon]$, we define the space $\mathcal G^\epsilon_0(\beta,\gamma)$ to be the completion of $\mathcal G(\epsilon,\epsilon)$ in $\mathcal G(\beta,\gamma)$, and denote by $(\mathcal G^\epsilon_0(\beta,\gamma))'$ the space of all continuous linear functionals on $\mathcal G^\epsilon_0(\beta,\gamma)$. We say that $f$ is a distribution if $f\in (\mathcal G^\epsilon_0(\beta,\gamma))'$. For a distribution $f$, the  grand maximal functions $\M(f)$ and  $\M_\rho(f)$ are defined by
	$$\M(f)(x) := \sup\{|\langle f,\phi \rangle|: \phi\in \mathcal G^\epsilon_0(\beta,\gamma), \|\phi\|_{\mathcal G(x,r,\beta,\gamma)}\leq 1\; \mbox{for some}\; r>0\}$$
	and
	$$\M_\rho(f)(x) := \sup\{|\langle f,\phi \rangle|: \phi\in \mathcal G^\epsilon_0(\beta,\gamma), \|\phi\|_{\mathcal G(x,r,\beta,\gamma)}\leq 1\; \mbox{for some}\; r\in (0,\rho(x))\}.$$
	
	\begin{definition}
		Let $\epsilon\in (0,1)$ and $\beta,\gamma\in (0,\epsilon)$.
		\begin{enumerate}[\rm (i)]
			\item The Hardy space $H^1(\mathcal X)$ is defined by
			$$H^1(\mathcal X) = \{f\in (\mathcal G^\epsilon_0(\beta,\gamma))': \|f\|_{H^1}:= \|\M( f)\|_{L^1}<\infty \}.$$
			\item The Hardy space $H^1_\rho(\mathcal X)$ is defined by
			$$H^1_\rho(\mathcal X) = \{f\in (\mathcal G^\epsilon_0(\beta,\gamma))': \|f\|_{H^1_\rho}:= \|\M_\rho( f)\|_{L^1}<\infty \}.$$
		\end{enumerate}
	\end{definition}

	\begin{definition}\label{14:38, 06/10/2023}
		Let $q\in (1,\infty]$.
		\begin{enumerate}[\rm (i)]
			\item A measurable function $ \mathfrak a$ is called an $(H^1,q)$-atom related to the ball $B$ if 
			\begin{enumerate}[\rm (a)]
				\item supp\,$\a\subset B$,
				\item $\|\a\|_{L^q}\leq \mu(B)^{\frac{1}{q}-1}$,
				\item $\int_{\mathcal X} \a(x) d\mu(x)=0$.
			\end{enumerate}
			
			\item\label{14:39, 06/10/2023} A measurable function $\mathfrak a$ is called an $(H^1_\rho,q)$-atom related to the ball $B=B(x_0,r)$ if $r <\rho(x_0)$ and $\mathfrak a$ satisfies {\rm (a)} and {\rm (b)}, and when $r < \frac{\rho(x_0)}{4}$, $\mathfrak a$ also satisfies {\rm (c)}.
			
		\end{enumerate}
		
	\end{definition}

	The following results are well known (see \cite[Remark 5.5]{GLY} and \cite[Theorem 3.2]{YZ}).
	
	\begin{theorem}\label{atomic decomposition}
		Let  $\epsilon\in (0,1)$, $\beta,\gamma\in (0,\epsilon)$ and $q\in(1,\infty]$. Then, we have:
		\begin{enumerate}[\rm (i)]
			\item The space $H^1(\mathcal X)$ coincides with the Hardy space $H^{1,q}_{\rm at}(\mathcal X)$ of Coifman and Weiss. More precisely, $f\in H^1(\X)$ if and only if $f$ can be written as  $f= \sum_{j=1}^\infty \lambda_j a_j$, where $\{a_j\}_{j=1}^\infty$ are $(H^1,q)$-atoms and $\{\lambda_j\}_{j=1}^\infty\in\ell^1$. Moreover, 
			$$\|f\|_{H^1}\sim \inf \left\{\sum_{j=1}^\infty |\lambda_j| : f= \sum_{j=1}^\infty \lambda_j a_j\right\}.$$
						
			\item\label{07:03, 14/10/2023} $f\in H^1_\rho(\X)$ if and only if $f$ can be written as  $f= \sum_{j=1}^\infty \lambda_j a_j$, where $\{a_j\}_{j=1}^\infty$ are $(H^1_\rho,q)$-atoms and $\{\lambda_j\}_{j=1}^\infty\in\ell^1$. Moreover, 
			$$\|f\|_{H^1_\rho}\sim \inf \left\{\sum_{j=1}^\infty |\lambda_j| : f= \sum_{j=1}^\infty \lambda_j a_j\right\}.$$
		\end{enumerate}	
	\end{theorem}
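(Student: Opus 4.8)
The plan is to prove both statements through the two standard implications: (a) every atom of the relevant type belongs to the corresponding Hardy space with norm bounded by an absolute constant, and (b) every distribution in the Hardy space decomposes into such atoms, the decomposition being extracted from a Calder\'on--Zygmund decomposition of its grand maximal function. Part (i) is then the identification of the grand-maximal Hardy space $H^1(\X)$ on an RD-space with the Coifman--Weiss atomic space (cf. \cite{GLY, HMY}), and part (ii) is its $\rho$-localized counterpart due to Yang and Zhou \cite{YZ}.

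For (a), let $\a$ be an $(H^1,q)$- or $(H^1_\rho,q)$-atom associated with $B=B(x_B,r_B)$; H\"older's inequality and condition (b) give $\|\a\|_{L^1}\le 1$. Split $\int_\X\M(\a)\,d\mu=\int_{2\kappa B}+\int_{\X\setminus 2\kappa B}$. On $2\kappa B$ I would combine H\"older with the $L^q$-boundedness of $\M$ --- which follows from the pointwise bound $\M(g)\lesssim\mathcal M_{\rm HL}(g)$ by the Hardy--Littlewood maximal operator together with doubling --- to get $\int_{2\kappa B}\M(\a)\,d\mu\lesssim\mu(B)^{1/q'}\|\a\|_{L^q}\lesssim 1$. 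For $x\notin 2\kappa B$, use the cancellation (c) to write $\langle\a,\phi\rangle=\langle\a,\phi-\phi(x_B)\rangle$ for any $\phi$ normalized at some scale around $x$, bound this via the regularity estimate (ii) of the test-function class by a constant times $\mu(B(x_B,d(x,x_B)))^{-1}(r_B/d(x,x_B))^\beta$, and sum over the dyadic annuli $d(x,x_B)\sim 2^k r_B$ using \eqref{16:53, 14/10/2023}; this gives $\int_{\X\setminus 2\kappa B}\M(\a)\,d\mu\lesssim 1$. In the localized case, when $r_B\ge\rho(x_B)/4$ there is no moment condition, but then $\M_\rho$ only tests against $\phi$ at scales $r<\rho(x)$, and by the admissibility inequality \eqref{admissible function} this forces the relevant $x$ to lie in a bounded dilate of $B$, so the tail term does not occur and the size estimate alone gives $\|\a\|_{H^1_\rho}\lesssim 1$.

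For (b) --- the technical heart --- given $f$ in $H^1(\X)$ (resp. $H^1_\rho(\X)$) one performs a Calder\'on--Zygmund decomposition at the heights $2^k$ of $\M(f)$ (resp. $\M_\rho(f)$): the sets $\Omega_k=\{\M(f)>2^k\}$ are open of finite measure, so one takes a Whitney-type covering $\{B_{k,j}\}_j$ of $\Omega_k$ with bounded overlap and a subordinate partition of unity $\{\phi_{k,j}\}_j$, and writes $f=\sum_{k,j}\lambda_{k,j}\a_{k,j}$ where $\a_{k,j}$ is a normalization of $(f-c_{k,j})\phi_{k,j}$, the constant $c_{k,j}$ being chosen to enforce vanishing integral. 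The crucial estimate is that the grand maximal function of $(f-c_{k,j})\phi_{k,j}$ is $\lesssim 2^k$ on a fixed dilate of $B_{k,j}$, which one obtains from the Calder\'on reproducing formula on RD-spaces together with the $\mathcal G(x_0,r,\beta,\gamma)$ regularity; summing then gives $\sum_{k,j}|\lambda_{k,j}|\lesssim\sum_k 2^k\mu(\Omega_k)\lesssim\|\M(f)\|_{L^1}$, while the $L^q$ normalization (when $q<\infty$, via $L^q$ bounds on the bad part) and the convergence of the series in $(\mathcal G^\epsilon_0(\beta,\gamma))'$ are checked along the way. For $H^1_\rho(\X)$ one additionally drops the subtraction of $c_{k,j}$ on every Whitney ball whose radius is comparable to or larger than $\rho$ at its center (so no moment condition is imposed there, matching Definition \ref{14:38, 06/10/2023}), and one must control how such ``large'' Whitney balls overlap and absorb the resulting non-cancellative pieces --- this is precisely where \eqref{admissible function} enters, and it is the main obstacle I anticipate in the argument.

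Since both assertions are already in the literature --- \cite[Remark 5.5]{GLY} for (i) and \cite[Theorem 3.2]{YZ} for (ii) --- one may also simply invoke those results; the sketch above indicates the route they follow.
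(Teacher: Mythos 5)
The paper offers no proof of this theorem at all: it simply records it as known, citing exactly the two references you name at the end, namely \cite[Remark 5.5]{GLY} for (i) and \cite[Theorem 3.2]{YZ} for (ii), so your proposal ultimately takes the same route as the paper. Your accompanying sketch of the standard atomic-decomposition argument is a reasonable outline of how those cited proofs go (the only imprecision being that for large atoms in the localized case the tail integral over $\X\setminus 2\kappa B$ does not vanish but is instead summable because the admissibility inequality \eqref{admissible function} converts the restriction $r<\rho(x)$ into decay of order $(\rho(x_B)/d(x,x_B))^{\gamma/(k_0+1)}$).
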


Here and in what follows, for any ball $B\subset \X$ and $f\in L^1_{\rm loc}(\X)$, we denote 
\begin{equation}
	f_B:= \frac{1}{\mu(B)}\int_B f(y)d\mu(y)\quad\text{and}\quad MO(f,B):= \frac{1}{\mu(B)}\int_B |f(x)- f_B|d\mu(x).
\end{equation}

\begin{definition}
	\begin{enumerate}[\rm (i)]
		\item  A function $f\in L^1_{\rm loc}(\X)$ is said to be in $BMO(\X)$ if
		$$\|f\|_{BMO}=\sup_{B} MO(f,B)<\infty,$$
		where the supremum is taken over all balls $B\subset\X$.
		
		\item  A function $f\in BMO(\X)$ is said to be in $BMO_\rho(\X)$ if
		$$\|f\|_{BMO_\rho}= \|f\|_{BMO} + \sup_B |f|_B<\infty,$$
		where the supremum is taken over all balls $B=B(x_0,r)\subset\X$ with $r\geq \rho(x_0)$.
		
		\item A function $f\in L^1_{\rm loc}(\X)$ is said to be in $BMO^{\log}_\rho(\X)$ if
		$$\|f\|_{BMO^{\log}_\rho}=\sup_{B} \log\left(e+\frac{\rho(x_0)}{r}\right) MO(f,B)<\infty,$$
		where the supremum is taken over all balls $B=B(x_0,r)\subset\X$.
	\end{enumerate}
\end{definition}

\begin{remark}\label{14:22, 18/10/2023}
	\begin{enumerate}[\rm (i)]
		\item\label{09:16, 29/10/2023} Let $p\in [1,\infty)$. Then, by \cite[Theorem 3.1]{Na},
		$$\left(\frac{1}{\mu(B)}\int_B |f(x)- f_B|^p d\mu(x)\right)^{1/p}\lesssim \|f\|_{BMO}$$
		for all $f\in BMO(\X)$ and all balls $B\subset \X$.
		
		\item\label{09:12, 29/10/2023} A standard argument $(\text{see also \cite[Lemma 2.1]{LY}})$ gives
		$$|f_B|\lesssim \log\left(e+\frac{\rho(x_0)}{r}\right)\|f\|_{BMO_\rho}$$
		for all $f\in BMO_\rho(\X)$ and all balls $B=B(x_0,r)\subset \X$.
		
		\item For any $f\in BMO^{\log}_\rho(\X)$, we have $\|f\|_{BMO}\leq \|f\|_{BMO^{\log}_\rho}$.
	\end{enumerate}
	
\end{remark}

The following results are well known (see \cite[Theorem B]{CW} and \cite[Theorem 2.1]{YYZ}).

\begin{theorem}\label{06:44, 14/10/2023}
	\begin{enumerate}[\rm (i)]
		\item The space $BMO(\X)$ is the dual space of $H^1(\X)$.
		
		\item\label{14:53, 13/12/2023} The space $BMO_\rho(\X)$ is the dual space of $H^1_\rho(\X)$.
	\end{enumerate}
\end{theorem}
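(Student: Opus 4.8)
The plan is to prove both dualities by the Fefferman--Stein pairing argument built on the atomic decompositions of Theorem \ref{atomic decomposition}: part (i) is the Coifman--Weiss duality \cite{CW}, and part (ii) its $\rho$-localized refinement in the spirit of \cite{YYZ}. Fix an auxiliary exponent $q\in(1,\infty)$. In both cases one shows that for $g$ in the relevant $BMO$-type space the pairing $\langle\ell_g,f\rangle:=\int_\X f(x)g(x)\,d\mu(x)$, defined initially on finite linear combinations of atoms, extends to a bounded functional with $\|\ell_g\|\lesssim\|g\|$, and conversely that every bounded functional arises this way; the two statements then follow once one also checks that finite atomic sums are dense in $H^1(\X)$, respectively $H^1_\rho(\X)$.

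For the inclusion $g\mapsto\ell_g$: if $g\in BMO_\rho(\X)$, then $g\in L^{q'}_{\rm loc}(\X)$ by John--Nirenberg (Remark \ref{14:22, 18/10/2023}\eqref{09:16, 29/10/2023}), so $\langle\ell_g,f\rangle$ is an absolutely convergent integral on finite atomic sums. For an $(H^1_\rho,q)$-atom $\a$ on $B=B(y,s)$ with $s<\rho(y)/4$, the cancellation $\int_\X\a\,d\mu=0$ and Hölder's inequality give $|\int_\X\a g\,d\mu|=|\int_B\a(g-g_B)\,d\mu|\le\|\a\|_{L^q}\|(g-g_B)\chi_B\|_{L^{q'}}\lesssim\mu(B)^{\frac1q-1}\mu(B)^{\frac1{q'}}\|g\|_{BMO}\lesssim\|g\|_{BMO_\rho}$; for an atom on $B=B(y,s)$ with $\rho(y)/4\le s<\rho(y)$ one splits $\int_B\a g=\int_B\a(g-g_B)+g_B\int_B\a$, bounds the first term as above, and bounds the second using $\|\a\|_{L^1}\le\mu(B)^{1/q'}\|\a\|_{L^q}\le1$ together with $|g_B|\lesssim\log(e+\rho(y)/s)\|g\|_{BMO_\rho}\le(\log(e+4))\|g\|_{BMO_\rho}$ from Remark \ref{14:22, 18/10/2023}\eqref{09:12, 29/10/2023}. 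Hence $|\langle\ell_g,\a\rangle|\lesssim\|g\|_{BMO_\rho}$ for every atom, and Theorem \ref{atomic decomposition}\eqref{07:03, 14/10/2023} plus density yields $\|\ell_g\|\lesssim\|g\|_{BMO_\rho}$; the same computation with (all-cancellative) $(H^1,q)$-atoms gives the corresponding bound for part (i).

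For the converse, given $\ell\in(H^1_\rho(\X))^*$ I would choose a countable cover $\{B_k=B(z_k,\rho(z_k)/2)\}$ of $\X$: since each $B_k$ has radius in $[\rho(z_k)/4,\rho(z_k))$, every $h\in L^q(B_k)$ is a multiple of size $\|h\|_{L^q}\mu(B_k)^{1-1/q}$ of a non-cancellative $(H^1_\rho,q)$-atom, so $\ell$ restricted to $L^q(B_k)$ is $L^q$-bounded and, by the Riesz representation of $(L^q(B_k))^{*}=L^{q'}(B_k)$, is integration against a unique $g_k\in L^{q'}(B_k)$; these agree on overlaps and patch to $g\in L^{q'}_{\rm loc}(\X)$ with $\ell(f)=\int_\X fg\,d\mu$ on finite atomic sums. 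To get $g\in BMO(\X)$, fix a ball $B$ and $h\in L^\infty(B)$ with $\|h\|_{L^\infty}\le1$: then $(h-h_B)/(2\mu(B))$ is an $(H^1,q)$-atom, hence lies in $H^1_\rho(\X)$ with norm $\lesssim1$ (as $\M_\rho\le\M$), and $\int_B(g-g_B)h\,d\mu=\int_Bg(h-h_B)\,d\mu=2\mu(B)\,\ell((h-h_B)/(2\mu(B)))$, so $|\int_B(g-g_B)h\,d\mu|\le2\mu(B)\|\ell\|$ and $MO(g,B)\lesssim\|\ell\|$; this already proves part (i). For the ``large ball'' term in $\|g\|_{BMO_\rho}$, for $B=B(y,s)$ with $s\ge\rho(y)$ write $|g|_B=\mu(B)^{-1}\sup_{\|h\|_{L^\infty}\le1}\int_B hg\,d\mu=\sup_{\|h\|_{L^\infty}\le1}\ell(h\chi_B/\mu(B))$, so it remains to prove the uniform bound $\|h\chi_B/\mu(B)\|_{H^1_\rho}\lesssim1$ for $\|h\|_{L^\infty}\le1$ and $s\ge\rho(y)$. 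This is the one genuinely nonclassical point, and it follows from the definition of $\M_\rho$: the crude estimate $\M_\rho(h\chi_B/\mu(B))\lesssim1/\mu(B)$ handles a fixed dilate $CB$, while for $x\notin CB$ the constraint $r<\rho(x)$ combined with the admissibility consequence $\rho(x)\lesssim\rho(y)^{1/(k_0+1)}d(x,y)^{k_0/(k_0+1)}$ (from Definition \ref{10:09, 30/11/2023}) yields $\M_\rho(h\chi_B/\mu(B))(x)\lesssim\mu(B(y,d(x,y)))^{-1}(\rho(y)/d(x,y))^{\gamma/(k_0+1)}$, which is integrable over $\{d(x,y)\ge Cs\}$ with total mass $\lesssim(\rho(y)/s)^{\gamma/(k_0+1)}\le1$. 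Combining, $\|g\|_{BMO_\rho}\lesssim\|\ell\|$, and $\ell=\ell_g$ on the dense set of finite atomic sums, hence on $H^1_\rho(\X)$. The classical ingredients (John--Nirenberg, pairing with cancellative atoms, the local Riesz representation and gluing) are routine; the main obstacle is precisely the uniform membership $\|h\chi_B/\mu(B)\|_{H^1_\rho}\lesssim1$ for balls of radius at least $\rho$ of their center, which is exactly what forces the extra term in $\|\cdot\|_{BMO_\rho}$ and distinguishes part (ii) from part (i).
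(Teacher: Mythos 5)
The paper does not prove this theorem at all: it is quoted as known, with part (i) attributed to \cite[Theorem B]{CW} and part (ii) to \cite[Theorem 2.1]{YYZ}, so there is no internal argument to compare against. Your proposal reconstructs what is essentially the proof in those references, and the outline is sound: the pairing bound against atoms (using cancellation for small balls and $|g_B|\lesssim\log(e+\rho(x_0)/r)\|g\|_{BMO_\rho}\lesssim\|g\|_{BMO_\rho}$ for balls with $r\sim\rho(x_0)$), the local Riesz representation to produce $g$, and --- the one genuinely non-classical point, which you correctly isolate --- the uniform bound $\|h\chi_B/\mu(B)\|_{H^1_\rho}\lesssim 1$ for $\|h\|_{L^\infty}\le1$ and $B=B(y,s)$ with $s\ge\rho(y)$, proved via the decay of $\M_\rho$ off $CB$ coming from the constraint $r<\rho(x)$ and the admissibility inequality; this is exactly what forces the extra term $\sup_B|f|_B$ in $\|\cdot\|_{BMO_\rho}$. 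Two points deserve more care. First, your converse for part (i) is extracted from the part (ii) construction, but that construction tests $\ell$ against non-cancellative atoms on the balls $B(z_k,\rho(z_k)/2)$, which are not elements of $H^1(\X)$; a functional merely on $H^1(\X)$ can only be tested against mean-zero functions, so for (i) you must run the Riesz representation on $L^q_0(B_k)$ and patch the resulting $g_k$'s up to additive constants (this is why $BMO(\X)$ is a space modulo constants while $BMO_\rho(\X)$ is not). Second, the phrase ``extends by density'' conceals the classical well-definedness issue: to pass from $|\ell_g(\a)|\lesssim\|g\|$ on individual atoms to $|\ell_g(f)|\lesssim\|g\|\,\|f\|_{H^1}$ on finite atomic sums one must know either that the finite atomic norm is comparable to the full $H^1$ norm on such sums, or justify interchanging $\sum_j\lambda_j\int\a_j g$ with $\int fg$ for an infinite near-optimal decomposition; this is handled in \cite{CW} (and, in the local setting, in \cite{YYZ} and \cite{DHK}) but is not automatic. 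Neither issue is fatal --- both are resolved by standard arguments --- but they are the places where a referee would press.
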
	

\section{Some auxiliary results}

Here and in what follows, for any ball $B$, we denote
\begin{equation}
	\text{$S_0(B):= 2\kappa B$ and $S_j(B):=(2\kappa)^{j+1}B\setminus (2\kappa)^j B$ for all $j\in\mathbb{Z}^+$.}
\end{equation}

\begin{definition}\label{the definition for atoms and molecules of log-type}
	Let $q\in (1,\infty]$ and let $\omega_1, \omega_2$ satisfy the log-Dini condition. 
	\begin{enumerate}[\rm (i)]
		\item A measurable function $\mathfrak a$ is called an $(H^1_\rho,q)$-atom of log-type related to the ball $B=B(x_0,r)$ if 
		\begin{enumerate}[\rm (a)]
			\item supp\,$\a\subset B$,
			
			\item $\|\a\|_{L^q}\leq \mu(B)^{\frac{1}{q}-1}$,
			
			\item  $\left|\int_{\X}\a(x)d\mu(x)\right|\leq  \frac{1}{\log\left(e+ \frac{\rho(x_0)}{r}\right)}$.
		\end{enumerate} 
	
		\item\label{09:28, 27/11/2023} A measurable function $\mathfrak m$ is called an $(H^1_\rho,q,\omega_1,\omega_2)$-molecule of log-type related to the ball $ B=B(x_0,r)$ if 
		\begin{enumerate}[\rm (a)]
			\item\label{17:10, 14/12/2023} $\|\m\|_{L^q(S_j(B))}\leq [\omega_1(((2\kappa)^{\frac{1}{k_0+1}})^{-j})+ \omega_2(\left(2\kappa\right)^{-j})]\mu((2\kappa)^j B)^{\frac{1}{q}-1}$ for all $j\in\mathbb{Z}^+_0$,
			
			\item  $\left|\int_{\X}\m(x)d\mu(x)\right|\leq  \frac{1}{\log\left(e+ \frac{\rho(x_0)}{r}\right)}$.
		\end{enumerate}
	\end{enumerate}
\end{definition}

\begin{remark}\label{12:09, 22/10/2023}
	\begin{enumerate}[\rm (i)]
		\item\label{14:29, 12/10/2023} If $\a$ is an $(H^1_\rho,q)$-atom related to the ball $B=B(x_0,r)$, then $\frac{1}{\log(e+4)}\a$ is an $(H^1_\rho,q)$-atom of log-type related to the ball $B$. Moreover, 
		$$\left|g_B \int_{\X} \a(x)d\mu(x)\right|\lesssim \frac{\|g\|_{BMO_\rho}}{\log\left(e+\frac{\rho(x_0)}{r}\right)}$$
		for all $g\in BMO_\rho(\X)$, where the positive constant $C$ is independent of $\a$ and $g$. 
		
		\item\label{20:22, 20/10/2023} If $\a$ is an $(H^1_\rho,q)$-atom of log-type related to the ball $B$, then $\min\{\omega_1(1)+\omega_2(1), 1\}\a$ is an $(H^1_\rho,q,\omega_1,\omega_2)$-molecule of log-type related to the ball $B$.
		
		\item\label{16:28, 21/10/2023} Since $\omega_i(t_0 t)\lesssim 2^{\lfloor \log_2 t_0\rfloor +1} \omega_i(t)$ for all $t_0>1$ and $t\geq 0$, we obtain that
		$$\|\omega_1\|_{\text{Dini}}\sim \sum_{j=0}^{\infty} \omega_1\left(\left((2\kappa)^{\frac{1}{k_0+1}}\right)^{-j}\right),\; \|\omega_2\|_{\text{Dini}}\sim \sum_{j=0}^{\infty} \omega_2\left(\left(2\kappa\right)^{-j}\right)$$
		and
		$$\|\omega_1\|_{\text{log-Dini}}\sim \sum_{j=0}^{\infty} (j+1) \omega_1\left(\left((2\kappa)^{\frac{1}{k_0+1}}\right)^{-j}\right),\; \|\omega_2\|_{\text{log-Dini}}\sim \sum_{j=0}^{\infty} (j+1) \omega_2\left(\left(2\kappa\right)^{-j}\right).$$
		
		\item Given two constants $c_1, c_2>1$, the condition (a) in Definition \ref{the definition for atoms and molecules of log-type}\eqref{09:28, 27/11/2023} can be replaced by 
		$$\|\m\|_{L^q(S_j(B))}\leq [\omega_1(c_1^{-j})+ \omega_2(c_2^{-j})]\mu((2\kappa)^j B)^{\frac{1}{q}-1}\;\text{for all $j\in\mathbb{Z}^+_0$}.$$
	\end{enumerate}
\end{remark}

Let $\varepsilon>0$. A modulus of continuity $\omega: [0,\infty)\to [0,\infty)$ is said to satisfy the $\varepsilon$-Dini condition if
\begin{equation}\label{09:07, 27/11/2023}
	\|\omega\|_{\text{$\varepsilon$-Dini}}:=\int_{0}^{1} \omega(t)\frac{dt}{t^{1+\varepsilon}}<\infty,
\end{equation}

\begin{remark}
	\begin{enumerate}[\rm (i)]
		\item If $\omega_1$ and $\omega_2$ satisfy the $\varepsilon$-Dini condition for some $\varepsilon>0$, then our notion of molecules coincides with the one in \cite{BDK, BLL, DLPV22, DLPV23}.
		
		\item The log-Dini condition \eqref{10:09, 31/10/2023} is weaker than the $\varepsilon$-Dini condition \eqref{09:07, 27/11/2023}. Thus, the notion of molecules in \cite{BDK, BLL, DLPV22, DLPV23} is not applicable in our setting.
	\end{enumerate}
\end{remark}

\begin{lemma}\label{14:51, 06/10/2023}
	Let $p\in [1,\infty)$. Then, there exists a positive constant $C$ such that
	$$\left(\frac{1}{\mu((2\kappa)^j B)}\int_{(2\kappa)^j B} |f(x)- f_{B}|^p d\mu(x)\right)^{1/p}\leq C (j+1) \|f\|_{BMO}$$
	for all $f\in BMO(\X)$, all balls $B$, and all $j\in \mathbb Z^+_0$. 
\end{lemma}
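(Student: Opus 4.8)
The plan is to combine the $L^p$ John--Nirenberg estimate from Remark \ref{14:22, 18/10/2023}\eqref{09:16, 29/10/2023} with a telescoping argument over the dilated balls $B_i:=(2\kappa)^i B$ for $i=0,1,\dots,j$. Writing $B_j=(2\kappa)^j B$ and using Minkowski's inequality in $L^p(B_j)$ with the constants $f_{B_j}$ and $f_B$, one gets
$$\left(\frac{1}{\mu(B_j)}\int_{B_j}|f(x)-f_B|^p\,d\mu(x)\right)^{1/p}\le \left(\frac{1}{\mu(B_j)}\int_{B_j}|f(x)-f_{B_j}|^p\,d\mu(x)\right)^{1/p}+|f_{B_j}-f_B|.$$
By Remark \ref{14:22, 18/10/2023}\eqref{09:16, 29/10/2023} applied to the ball $B_j$, the first term on the right is $\lesssim\|f\|_{BMO}$, with a constant independent of $j$ and $B$. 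Hence it suffices to prove $|f_{B_j}-f_B|\lesssim (j+1)\|f\|_{BMO}$.

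For that term I would telescope, $|f_{B_j}-f_B|\le\sum_{i=0}^{j-1}|f_{B_{i+1}}-f_{B_i}|$, and estimate each summand using $B_i\subset B_{i+1}$:
$$|f_{B_{i+1}}-f_{B_i}|=\left|\frac{1}{\mu(B_i)}\int_{B_i}\bigl(f(x)-f_{B_{i+1}}\bigr)\,d\mu(x)\right|\le\frac{\mu(B_{i+1})}{\mu(B_i)}\cdot\frac{1}{\mu(B_{i+1})}\int_{B_{i+1}}|f(x)-f_{B_{i+1}}|\,d\mu(x).$$
The last integral is exactly $MO(f,B_{i+1})\le\|f\|_{BMO}$, and the doubling inequality \eqref{16:53, 14/10/2023} applied with the fixed dilation factor $\lambda=2\kappa$ gives $\mu(B_{i+1})=\mu\bigl((2\kappa)B_i\bigr)\le C_0(2\kappa)^{\mathfrak n}\mu(B_i)$, a bound uniform in $i$. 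Thus $|f_{B_{i+1}}-f_{B_i}|\lesssim\|f\|_{BMO}$ with a constant depending only on $\kappa$, $C_0$ and $\mathfrak n$; summing over $0\le i\le j-1$ yields $|f_{B_j}-f_B|\lesssim j\|f\|_{BMO}$, and combining with the first step completes the proof. (When $j=0$ the telescoping sum is empty and the claim is just Remark \ref{14:22, 18/10/2023}\eqref{09:16, 29/10/2023} on $B$.)

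There is essentially no serious obstacle; the only point requiring care is bookkeeping of the implied constants --- those from the $L^p$ John--Nirenberg bound, from the volume comparison $\mu(B_{i+1})\lesssim\mu(B_i)$, and from the telescoping --- to verify they are all independent of $i$, $j$ and $B$. This uniformity is precisely what the homogeneity of $\X$ (the ball-volume estimate \eqref{16:53, 14/10/2023} with the fixed factor $2\kappa$) provides, and the argument handles all $p\in[1,\infty)$ at once since Remark \ref{14:22, 18/10/2023}\eqref{09:16, 29/10/2023} does.
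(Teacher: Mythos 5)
Your proposal is correct and follows essentially the same route as the paper's proof: split off $f-f_{(2\kappa)^jB}$ via the triangle inequality in $L^p$, control it by the $L^p$ John--Nirenberg bound of Remark \ref{14:22, 18/10/2023}\eqref{09:16, 29/10/2023}, and bound the remaining constant by telescoping through the intermediate balls $(2\kappa)^iB$ using the doubling estimate \eqref{16:53, 14/10/2023}. The only difference is cosmetic indexing of the telescoping sum.
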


\begin{proof}
	The case $j=0$ is trivial by Remark \ref{14:22, 18/10/2023}\eqref{09:16, 29/10/2023}. When $j\in\mathbb{Z}^+$, also by Remark \ref{14:22, 18/10/2023}\eqref{09:16, 29/10/2023}, we get
	\begin{align*}
		&\left(\frac{1}{\mu((2\kappa)^j B)}\int_{(2\kappa)^j B} |f(x)- f_{B}|^p d\mu(x)\right)^{1/p}=\frac{\|f-f_B\|_{L^p((2\kappa)^j B)}}{\mu((2\kappa)^j B)^{1/p}}\\
		&\hskip3.2cm \leq \frac{\|f-f_{(2\kappa)^j B}\|_{L^p((2\kappa)^j B)} }{\mu((2\kappa)^j B)^{1/p}}+\sum_{i=1}^{j} \frac{\|f_{(2\kappa)^i B}-f_{(2\kappa)^{i-1} B}\|_{L^p((2\kappa)^j B)}}{\mu((2\kappa)^j B)^{1/p}}\\
		&\hskip3.2cm \lesssim \|f\|_{BMO}+ \sum_{i=1}^{j} \frac{\mu((2\kappa)^i B)}{\mu((2\kappa)^{i-1} B)} MO(f, (2\kappa)^i B)\lesssim (j+1)\|f\|_{BMO}.
	\end{align*}
\end{proof}

\begin{lemma}\label{07:01, 14/10/2023}
	Let $q\in (1,\infty]$ and let $\omega_1, \omega_2$ satisfy the log-Dini condition. Then, there exists a positive constant $C$ such that
	$$\|\m\|_{H^1_\rho}\leq C$$
	for all $(H^1_\rho,q,\omega_1,\omega_2)$-molecules of log-type $\m$.
\end{lemma}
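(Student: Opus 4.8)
The plan is to realize $\m$ as an absolutely convergent sum, in $H^1_\rho(\X)$, of $(H^1,q)$-atoms — which embed continuously into $H^1_\rho(\X)$ because $\M_\rho(\cdot)\le\M(\cdot)$ pointwise, hence $\|\cdot\|_{H^1_\rho}\le\|\cdot\|_{H^1}$ — together with a single \emph{local, non-cancellative} piece that carries the (small) moment of $\m$. Fix a $(H^1_\rho,q,\omega_1,\omega_2)$-molecule of log-type $\m$ related to $B=B(x_0,r)$, set $B_i:=(2\kappa)^iB$, $\m_j:=\m\chi_{S_j(B)}$ (so $\m=\sum_{j\ge0}\m_j$ with $\operatorname{supp}\m_j\subset B_{j+1}$), $b_j:=\int_\X\m_j\,d\mu$, and
$$\theta_j:=\omega_1\!\left(\bigl((2\kappa)^{\frac1{k_0+1}}\bigr)^{-j}\right)+\omega_2\!\left((2\kappa)^{-j}\right).$$
From condition (a) in Definition \ref{the definition for atoms and molecules of log-type}\eqref{09:28, 27/11/2023}, H\"older's inequality and the doubling property one obtains $\|\m_j\|_{L^q}\lesssim\theta_j\,\mu(B_{j+1})^{\frac1q-1}$ and $|b_j|\le\|\m_j\|_{L^1}\lesssim\theta_j$; moreover $\sum_{j\ge0}b_j=\int_\X\m\,d\mu=:b$, and condition (b) gives $|b|\le 1/\log(e+\rho(x_0)/r)$.

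I would then isolate two elementary facts. \textbf{(I)} If $B'\subset\X$ is a ball and $h$ is supported in $B'$ with $\int_\X h\,d\mu=0$ and $\|h\|_{L^q}\le\Lambda\,\mu(B')^{\frac1q-1}$, then $\Lambda^{-1}h$ is an $(H^1,q)$-atom, hence $\|h\|_{H^1_\rho}\le\|h\|_{H^1}\lesssim\Lambda$ by the atomic characterization of $H^1(\X)$ (Theorem \ref{atomic decomposition}). \textbf{(II)} $\bigl\|\mu(2\kappa B)^{-1}\chi_{2\kappa B}\bigr\|_{H^1_\rho}\lesssim\log(e+\rho(x_0)/r)$. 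For (II) one may either invoke the $BMO_\rho$--$H^1_\rho$ duality (Theorem \ref{06:44, 14/10/2023}\eqref{14:53, 13/12/2023}) together with $|g_{2\kappa B}|\lesssim\log(e+\rho(x_0)/r)\,\|g\|_{BMO_\rho}$ (Remark \ref{14:22, 18/10/2023}\eqref{09:12, 29/10/2023}), once one knows $\mu(2\kappa B)^{-1}\chi_{2\kappa B}\in H^1_\rho(\X)$; or argue directly by telescoping $\mu(2\kappa B)^{-1}\chi_{2\kappa B}$ through the scales $(2\kappa)^k r$ up to a scale comparable to $\rho(x_0)$ — this produces $\lesssim\log(e+\rho(x_0)/r)$ differences $\mu(B_k)^{-1}\chi_{B_k}-\mu(B_{k+1})^{-1}\chi_{B_{k+1}}$, each $\lesssim1$ times an $(H^1,q)$-atom by (I), plus a remainder supported on a ball of radius $\sim\rho(x_0)$, which a standard covering argument writes as a combination of $(H^1_\rho,q)$-atoms (Definition \ref{14:38, 06/10/2023}\eqref{14:39, 06/10/2023}) with coefficients summing to $\lesssim1$.

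With these in hand, I would use the decomposition
$$\m=\sum_{j\ge0}\bigl(\m_j-b_j\eta_j\bigr)\;+\;b\,\eta_0\;+\;\sum_{i\ge1}\beta_i\bigl(\eta_i-\eta_{i-1}\bigr),\qquad \eta_j:=\frac{\chi_{B_{j+1}}}{\mu(B_{j+1})},\quad \beta_i:=\sum_{j\ge i}b_j,$$
where the last two terms arise from $\sum_j b_j\eta_j$ via the telescoping $\eta_j=\eta_0+\sum_{i=1}^j(\eta_i-\eta_{i-1})$ followed by reindexing. Each $\m_j-b_j\eta_j$ is supported in $B_{j+1}$, has vanishing integral and $\|\m_j-b_j\eta_j\|_{L^q}\lesssim\theta_j\mu(B_{j+1})^{\frac1q-1}$; each $\eta_i-\eta_{i-1}$ is supported in $B_{i+1}$, has vanishing integral and $L^q$-norm $\lesssim\mu(B_{i+1})^{\frac1q-1}$. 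Hence, by fact (I), $\bigl\|\sum_{j\ge0}(\m_j-b_j\eta_j)\bigr\|_{H^1_\rho}\lesssim\sum_{j\ge0}\theta_j$ and $\bigl\|\sum_{i\ge1}\beta_i(\eta_i-\eta_{i-1})\bigr\|_{H^1_\rho}\lesssim\sum_{i\ge1}|\beta_i|\le\sum_{i\ge1}\sum_{j\ge i}\theta_j=\sum_{j\ge1}j\,\theta_j$, while $\|b\,\eta_0\|_{H^1_\rho}\le|b|\,\|\eta_0\|_{H^1_\rho}\lesssim\bigl(\log(e+\rho(x_0)/r)\bigr)^{-1}\log(e+\rho(x_0)/r)=1$ by fact (II). Finally, Remark \ref{12:09, 22/10/2023}\eqref{16:28, 21/10/2023} identifies $\sum_{j\ge0}\theta_j$ and $\sum_{j\ge0}(j+1)\theta_j$ with multiples of $\|\omega_1\|_{\text{Dini}}+\|\omega_2\|_{\text{Dini}}$ and $\|\omega_1\|_{\text{log-Dini}}+\|\omega_2\|_{\text{log-Dini}}$, both finite under the log-Dini hypothesis; the bound $\sum_j(j+1)\theta_j<\infty$ also makes the double series converge absolutely in $H^1_\rho(\X)$, which legitimizes the rearrangement above. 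Summing the three estimates yields $\|\m\|_{H^1_\rho}\lesssim1$.

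The main obstacle is twofold. First, fact (II): one must know that the $H^1_\rho$-cost of a non-cancellative $L^q$-normalized bump at scale $r$ is no worse than $\log(e+\rho(x_0)/r)$, and this is exactly the quantity against which the molecule's moment bound $|b|\le1/\log(e+\rho(x_0)/r)$ in condition (b) is designed to cancel. Second, the factor $j$ in $\sum_{j\ge1}j\,\theta_j$, produced by the telescoping of the non-cancellative annular averages $\eta_j$: it is precisely this logarithmic loss that forces the log-Dini condition rather than the mere Dini condition — under the latter the argument breaks down.
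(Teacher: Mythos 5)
Your argument is correct, but it takes a genuinely different route from the paper's. The paper proves the lemma by duality: invoking the weak$^*$-convergence result of \cite{DHK}, it reduces the bound to showing $\left|\int_{\X}\m(x)g(x)\,d\mu(x)\right|\lesssim\|g\|_{BMO_\rho}$ for all $g\in C_c(\X)$, then splits off $g_B\int_{\X}\m\,d\mu$ --- controlled by the moment condition (b) against the estimate $|g_B|\lesssim\log\left(e+\frac{\rho(x_0)}{r}\right)\|g\|_{BMO_\rho}$ of Remark \ref{14:22, 18/10/2023}\eqref{09:12, 29/10/2023} --- and bounds $\int_{\X}\m(g-g_B)\,d\mu$ by H\"older's inequality over the annuli $S_j(B)$ together with Lemma \ref{14:51, 06/10/2023}, which is where the factor $(j+2)$ and hence the log-Dini condition enter. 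Your proof is the constructive (synthesis) counterpart of this: your fact (II), that the $H^1_\rho$-cost of a noncancellative normalized bump at scale $r$ is $\lesssim\log\left(e+\frac{\rho(x_0)}{r}\right)$, is exactly the predual manifestation of the bound on $|g_B|$ that the paper uses, and your telescoping loss $\sum_{j\ge1}j\,\theta_j$ mirrors the paper's $\sum_j(j+2)\theta_j$; both arguments need log-Dini for the same reason. What your route buys is an explicit decomposition of the molecule into $(H^1,q)$-atoms plus a single local bump, which could be of independent use; the price is that it is longer and needs two pieces of infrastructure the duality proof sidesteps, namely the covering argument writing a bump at scale comparable to $\rho(x_0)$ as a bounded combination of noncancellative $(H^1_\rho,q)$-atoms (using admissibility to keep $\rho$ comparable across the cover), and the verification that the doubly indexed series converges absolutely in $H^1_\rho(\X)$ so the rearrangement is legitimate --- both of which you correctly flag and which do go through, though you leave the covering step at the level of a sketch.
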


\begin{proof}
	By \cite[Theorem 1.1]{DHK}, it suffices to prove that 
	\begin{equation}\label{06:54, 14/10/2023}
		\left|\int_{\X} \m(x) g(x) d\mu(x) \right|\lesssim \|g\|_{BMO_\rho}
	\end{equation}
	for all $(H^1_\rho,q,\omega_1,\omega_2)$-molecules of log-type $\m$ related to the balls $B=B(x_0, r)$ and all $g\in C_c(\X)$. Indeed, it follows from Remark \ref{14:22, 18/10/2023}\eqref{09:12, 29/10/2023}, the H\"older inequality, Lemma \ref{14:51, 06/10/2023} and Remark \ref{12:09, 22/10/2023}\eqref{16:28, 21/10/2023} that
	\begin{align*}
		&\left|\int_{\X} \m(x) g(x) d\mu(x) \right|\leq |g_B|\left|\int_{\X} \m(x) d\mu(x) \right|+ \left|\int_{\X} \m(x) (g(x)-g_B) d\mu(x) \right| \\			
		&\hskip0.5cm\lesssim \log\left(e+\frac{\rho(x_0)}{r}\right)\|g\|_{BMO_\rho} \left|\int_{\X} \m(x) d\mu(x) \right| +\sum_{j=0}^\infty \|\m\|_{L^q(S_j(B))} \|g-g_B\|_{L^{q'}(S_j(B))}\\
		&\hskip0.5cm\lesssim \|g\|_{BMO_\rho} + \sum_{j=0}^\infty [\omega_1(((2\kappa)^{\frac{1}{k_0+1}})^{-j})+ \omega_2(\left(2\kappa\right)^{-j})] \mu((2\kappa)^{j}B)^{\frac{1}{q}-1}\times\\
		& \hskip9cm \times\mu((2\kappa)^{j+1}B)^{\frac{1}{q'}} (j+2)\|g\|_{BMO}\\
		&\hskip0.5cm\lesssim \|g\|_{BMO_\rho} +   \|g\|_{BMO} (\|\omega_1\|_{\text{log-Dini}}+ \|\omega_2\|_{\text{log-Dini}}) \lesssim \|g\|_{BMO_\rho}.
	\end{align*}
	This proves \eqref{06:54, 14/10/2023}, and thus completes the proof of Lemma \ref{07:01, 14/10/2023}.
	
\end{proof}

As a consequence of Lemma \ref{07:01, 14/10/2023}, Remark \ref{12:09, 22/10/2023}(\eqref{14:29, 12/10/2023}-\eqref{20:22, 20/10/2023}) and Theorem \ref{atomic decomposition}\eqref{07:03, 14/10/2023}, we obtain the following new molecular characterization of $H^1_\rho(\X)$.

\begin{theorem}\label{the molecular characterization}
	Let $q\in (1,\infty]$ and let $\omega_1,\omega_2$ satisfy the log-Dini condition. Then, $f\in H^1_\rho(\X)$ if and only if $f$ can be written as  $f= \sum_{j=1}^\infty \lambda_j \m_j$, where $\{\m_j\}_{j=1}^\infty$ are $(H^1_\rho,q,\omega_1,\omega_2)$-molecules of log-type and $\{\lambda_j\}_{j=1}^\infty\in\ell^1$. Moreover,
	$$\|f\|_{H^1_\rho}\sim \inf \left\{\sum_{j=1}^\infty |\lambda_j| : f= \sum_{j=1}^\infty \lambda_j \m_j\right\}.$$
\end{theorem}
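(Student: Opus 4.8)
The plan is to prove the two directions of the molecular characterization separately. For the easy direction, suppose $f = \sum_{j=1}^\infty \lambda_j \m_j$ with $\{\m_j\}$ a sequence of $(H^1_\rho,q,\omega_1,\omega_2)$-molecules of log-type and $\{\lambda_j\}\in\ell^1$; one needs to make sense of this sum in $(\mathcal G^\epsilon_0(\beta,\gamma))'$ (which follows because each molecule, being in particular in $L^q$ with bounded-support tails controlled by $\sum_j \omega_i(\cdot)<\infty$, lies in $H^1_\rho(\X)$ with uniformly bounded norm by Lemma \ref{07:01, 14/10/2023}), and then conclude $\|f\|_{H^1_\rho}\leq C\sum_j|\lambda_j|$ by the triangle inequality in $H^1_\rho(\X)$ together with the completeness of $H^1_\rho(\X)$. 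Taking the infimum over all such decompositions gives one half of the norm equivalence.

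For the converse direction, suppose $f\in H^1_\rho(\X)$. By Theorem \ref{atomic decomposition}\eqref{07:03, 14/10/2023}, $f = \sum_{j=1}^\infty \lambda_j a_j$ where the $a_j$ are $(H^1_\rho,q)$-atoms and $\sum_j|\lambda_j|\lesssim \|f\|_{H^1_\rho}$. Now invoke Remark \ref{12:09, 22/10/2023}: part \eqref{14:29, 12/10/2023} says that $\frac{1}{\log(e+4)}a_j$ is an $(H^1_\rho,q)$-atom of log-type, and part \eqref{20:22, 20/10/2023} says that $\min\{\omega_1(1)+\omega_2(1),1\}$ times an $(H^1_\rho,q)$-atom of log-type is an $(H^1_\rho,q,\omega_1,\omega_2)$-molecule of log-type. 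Composing these, there is an absolute constant $c_0 = \log(e+4)/\min\{\omega_1(1)+\omega_2(1),1\} > 0$ such that $\m_j := a_j/c_0$ is an $(H^1_\rho,q,\omega_1,\omega_2)$-molecule of log-type for every $j$. Hence $f = \sum_{j=1}^\infty (c_0\lambda_j)\m_j$ is a molecular decomposition with $\sum_j |c_0\lambda_j| = c_0\sum_j|\lambda_j|\lesssim \|f\|_{H^1_\rho}$, which gives the reverse norm estimate.

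I do not expect either direction to present a serious obstacle, since the heavy lifting has already been done: Lemma \ref{07:01, 14/10/2023} supplies the uniform bound $\|\m\|_{H^1_\rho}\leq C$ for molecules (this is where the log-Dini condition is genuinely used, via the convergence of $\sum_j (j+1)\omega_i(\cdots)$ in Remark \ref{12:09, 22/10/2023}\eqref{16:28, 21/10/2023}), and Theorem \ref{atomic decomposition}\eqref{07:03, 14/10/2023} supplies the atomic decomposition. The only point requiring a little care is the convergence of the molecular series in the distribution space $(\mathcal G^\epsilon_0(\beta,\gamma))'$ and the legitimacy of passing the $H^1_\rho$-norm through the infinite sum; this is handled by the standard argument that the partial sums form a Cauchy sequence in $H^1_\rho(\X)$ (using $\|\m_j\|_{H^1_\rho}\leq C$ and $\{\lambda_j\}\in\ell^1$) and that $H^1_\rho(\X)$ embeds continuously into $(\mathcal G^\epsilon_0(\beta,\gamma))'$, so the $H^1_\rho$-limit coincides with the distributional sum. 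In short, the proof is essentially a two-line deduction from Lemma \ref{07:01, 14/10/2023}, Remark \ref{12:09, 22/10/2023}, and Theorem \ref{atomic decomposition}, exactly as the sentence preceding the statement already advertises.
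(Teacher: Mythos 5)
Your proposal is correct and follows exactly the paper's route: the paper derives this theorem in one line as a consequence of Lemma \ref{07:01, 14/10/2023} (uniform $H^1_\rho$-bound for molecules of log-type), Remark \ref{12:09, 22/10/2023}(i)--(ii) (every $(H^1_\rho,q)$-atom is a fixed constant times a molecule of log-type), and Theorem \ref{atomic decomposition}(ii) (atomic decomposition), which is precisely the argument you spell out. Your added remarks on convergence of the molecular series in $(\mathcal G^\epsilon_0(\beta,\gamma))'$ and the completeness of $H^1_\rho(\X)$ are standard and consistent with what the paper leaves implicit.
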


	Let $p\in [1,\infty)$. A function $f\in L^1_{\rm loc}(\X)$ is said to be in $BMO^{\log}_{\rho,p}(\X)$ if
	$$\|f\|_{BMO^{\log}_{\rho,p}}=\sup_{B} \log\left(e+\frac{\rho(x_0)}{r}\right)\left(\frac{1}{\mu(B)} \int_B \left|f(x)-f_B\right|^p d\mu(x)\right)^{1/p}<\infty,$$
	where the supremum is taken over all balls $B=B(x_0,r)\subset\X$.

\begin{lemma}\label{08:21, 15/10/2023}
	Let $p\in [1,\infty)$. Then, $BMO^{\log}_{\rho,p}(\X)= BMO^{\log}_{\rho}(\X)$ with equivalent norms.
\end{lemma}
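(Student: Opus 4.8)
The plan is to prove the two inclusions $BMO^{\log}_{\rho,p}(\X)\hookrightarrow BMO^{\log}_{\rho}(\X)$ and $BMO^{\log}_{\rho}(\X)\hookrightarrow BMO^{\log}_{\rho,p}(\X)$ with norm control in both directions. The first inclusion is the easy half: for $p\ge 1$, Jensen's (Hölder's) inequality gives $MO(f,B)=\frac{1}{\mu(B)}\int_B|f-f_B|\,d\mu\le\bigl(\frac{1}{\mu(B)}\int_B|f-f_B|^p\,d\mu\bigr)^{1/p}$, and multiplying by $\log(e+\rho(x_0)/r)$ and taking the supremum over all balls $B=B(x_0,r)$ yields $\|f\|_{BMO^{\log}_\rho}\le\|f\|_{BMO^{\log}_{\rho,p}}$ immediately. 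So the content of the lemma is the reverse direction.

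For the reverse inclusion, fix $f\in BMO^{\log}_\rho(\X)$ and a ball $B=B(x_0,r)$; we must bound $L_B:=\log(e+\rho(x_0)/r)\bigl(\frac{1}{\mu(B)}\int_B|f-f_B|^p\,d\mu\bigr)^{1/p}$ by a constant times $\|f\|_{BMO^{\log}_\rho}$. The natural first step is the John–Nirenberg-type estimate recorded in Remark \ref{14:22, 18/10/2023}\eqref{09:16, 29/10/2023}, namely $\bigl(\frac{1}{\mu(B)}\int_B|f-f_B|^p\,d\mu\bigr)^{1/p}\lesssim\|f\|_{BMO}$ for $f\in BMO(\X)$; combined with Remark \ref{14:22, 18/10/2023}(iii), which says $\|f\|_{BMO}\le\|f\|_{BMO^{\log}_\rho}$ — wait, that bound is stated for $BMO^{\log}_\rho$, and indeed directly from the definition $\log(e+\rho(x_0)/r)\ge 1$ forces $MO(f,B)\le\|f\|_{BMO^{\log}_\rho}$, so $\|f\|_{BMO}\le\|f\|_{BMO^{\log}_\rho}$. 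Hence $L_B\lesssim\log(e+\rho(x_0)/r)\,\|f\|_{BMO^{\log}_\rho}$, which is not yet good enough because of the extra logarithmic factor. The point is that this crude estimate is sharp only when $\log(e+\rho(x_0)/r)$ is bounded, i.e.\ when $r\gtrsim\rho(x_0)$; in that regime $\log(e+\rho(x_0)/r)\le\log(e+1)$ is a constant and the crude bound already suffices.

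So the real work is the range $r<\rho(x_0)$, where $\log(e+\rho(x_0)/r)$ can be large. Here I would exploit the subadditivity/slow growth of the logarithm together with a telescoping decomposition across dyadic-type annuli, in the spirit of Lemma \ref{14:51, 06/10/2023}: roughly, one compares $f_B$ to $f_{B_0}$ for a fixed reference ball $B_0=B(x_0,\rho(x_0))$ of radius $\rho(x_0)$, passing through the chain of balls $(2\kappa)^{-i}B_0$ (or, more precisely, a chain of comparable radii decreasing from $\rho(x_0)$ to $r$), which has length comparable to $\log_{2\kappa}(\rho(x_0)/r)\sim\log(e+\rho(x_0)/r)$. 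On each link the oscillation is controlled by $MO(f,\cdot)\lesssim\|f\|_{BMO^{\log}_\rho}/\log(e+\rho(x_0)/(2\kappa)^{-i}r)$, and summing the reciprocal-logarithm weights against a chain of length $\sim\log(e+\rho(x_0)/r)$ produces — after using $\sum_{i=1}^{N}\tfrac{1}{\,i\,}\sim\log N$ — only a bounded contribution; this is exactly the mechanism by which the $BMO^{\log}_\rho$ norm absorbs the extra logarithm. The main obstacle, and the step to be carried out carefully, is precisely this summation: one must verify that the radii in the chain satisfy $\log(e+\rho(x_0)/(\text{radius}))\gtrsim i$ along the chain so that the weighted sum $\sum_{i}\frac{\mu(B_i)}{\mu(B_{i+1})}\cdot\frac{1}{\,i+1\,}$ telescopes to a constant; the reverse doubling and doubling bounds \eqref{16:53, 14/10/2023} keep the measure ratios uniformly bounded, so it comes down to an elementary but slightly delicate logarithmic estimate. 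Once that is done, combining both regimes gives $\|f\|_{BMO^{\log}_{\rho,p}}\lesssim\|f\|_{BMO^{\log}_\rho}$, completing the proof.
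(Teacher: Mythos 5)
Your easy direction (Jensen/H\"older) is fine, but the hard inclusion $BMO^{\log}_{\rho}(\X)\hookrightarrow BMO^{\log}_{\rho,p}(\X)$ has a genuine gap, in fact two. First, the mechanism you propose is aimed at the wrong quantity: a telescoping chain from $B=B(x_0,r)$ up to a reference ball $B_0=B(x_0,\rho(x_0))$ controls the difference of averages $|f_B-f_{B_0}|$, whereas what must be bounded is the oscillation $\bigl(\frac{1}{\mu(B)}\int_B|f-f_B|^p\,d\mu\bigr)^{1/p}$ of $f$ about its \emph{own} average on $B$ --- a purely local quantity on $B$ to which comparison with a larger concentric ball contributes nothing. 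Second, even for the quantity the chain does control, the summation is miscomputed: with links of size $\sim 1/(i+1)$ over a chain of length $N\sim\log(e+\rho(x_0)/r)$, the sum $\sum_{i=1}^{N}\frac1i\sim\log N$ is \emph{not} bounded (it grows like $\log\log(e+\rho(x_0)/r)$), so the claimed ``bounded contribution'' fails as stated.

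The ingredient you are missing is the localized John--Nirenberg self-improvement combined with a monotonicity property of the weight $\phi_\rho(x,r)=1/\log(e+\rho(x)/r)$ under inclusion of balls. The paper simply invokes Nakai's general theorem \cite[Theorem 3.1]{Na}, which gives $BMO^{\log}_{\rho,p}(\X)=BMO^{\log}_{\rho}(\X)$ once one verifies (a) $\phi_\rho(x,r)\sim\phi_\rho(x,r_0)$ for $\frac12\le r/r_0\le 2$ and (b) $\phi_\rho(x,r)\lesssim\phi_\rho(x_0,r_0)$ whenever $B(x,r)\subset B(x_0,r_0)$. Condition (b) --- equivalently $\log(e+\rho(x_0)/r_0)\lesssim\log(e+\rho(x)/r)$ for subballs --- is the real content of the proof: it is exactly where the admissibility of $\rho$ (to compare $\rho(x)$ with $\rho(x_0)$ via \eqref{admissible function}) and the reverse doubling bound \eqref{16:53, 14/10/2023} (to control $r$ by a constant multiple of $r_0$) enter. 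The subballs arising in the John--Nirenberg covering have centres $x\ne x_0$, so $\rho$ genuinely varies and this step cannot be bypassed; your chain is concentric, so $\rho(x_0)$ stays fixed throughout and the dependence of the weight on the centre never appears in your argument. The repair is to replace the chaining by
$\bigl(\frac{1}{\mu(B)}\int_B|f-f_B|^p\,d\mu\bigr)^{1/p}\lesssim\sup_{B'\subseteq B}MO(f,B')\le\|f\|_{BMO^{\log}_\rho}\sup_{B'=B(x',r')\subseteq B}\phi_\rho(x',r')\lesssim\|f\|_{BMO^{\log}_\rho}\,\phi_\rho(x_0,r)$,
which is precisely what (b) delivers.
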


\begin{proof}
	Let $\phi_\rho(x,r)=\frac{1}{\log\left(e+\frac{\rho(x)}{r}\right)}$ for all $(x,r)\in \X\times (0,\infty)$. Then, by \cite[Theorem 3.1]{Na}, it suffices to prove that
	\begin{equation}\label{11:26, 14/10/2023}
		\frac{\phi_\rho(x,r)}{\phi_\rho(x,r_0)}\sim 1
	\end{equation}
	for all $\frac{1}{2}\leq \frac{r}{r_0}\leq 2$, and
	\begin{equation}\label{11:27, 14/10/2023}
		\phi_\rho(x,r)\lesssim \phi_\rho(x_0,r_0)
	\end{equation}
	for all $B(x,r)\subset B(x_0,r_0)$. Indeed, from $1\leq\frac{\log(e+2t)}{\log(e+t)}\leq 2$ for all $t> 0$, we get
	$$\frac{1}{2}\leq \frac{\phi_\rho(x,r)}{\phi_\rho(x,r_0)}=\frac{\log\left(e+\frac{\rho(x)}{r_0}\right)}{\log\left(e+\frac{\rho(x)}{r}\right)}\leq 2$$
	for all $\frac{1}{2}\leq \frac{r}{r_0}\leq 2$. This proves that \eqref{11:26, 14/10/2023} holds.
	
	In order to establish \eqref{11:27, 14/10/2023}, we show that for all $B(x,r)\subset B(x_0,r_0)$, 
	$$\phi_\rho(x,r)\leq c_0 2^{k_0} 2\kappa C_0^{1/{\mathfrak d}}  \phi_\rho(x_0,r_0),$$
	or equivalently,	
	\begin{equation}\label{17:23, 14/10/2023}
		\log\left(e+\frac{\rho(x_0)}{r_0}\right)\leq c_0 2^{k_0} 2\kappa C_0^{1/{\mathfrak d}} \log\left(e+\frac{\rho(x)}{r}\right),
	\end{equation}
	where $c_0, k_0$ are as in \eqref{admissible function}, and $C_0,\mathfrak d$ are as in \eqref{16:53, 14/10/2023}. Indeed, \eqref{17:23, 14/10/2023} is obvious if $r_0\geq\rho(x_0)$. When $0< r_0<\rho(x_0)$, it follows from \eqref{admissible function} and $B(x,r)\subset B(x_0,r_0)$ that
	\begin{equation}\label{17:14, 14/10/2023}
		\rho(x_0)\leq c_0 \left(1+\frac{d(x,x_0)}{\rho(x_0)}\right)^{k_0}\rho(x)\leq c_0 2^{k_0} \rho(x)
	\end{equation}
	and
	\begin{equation}\label{17:15, 14/10/2023}
		B(x,r)\subset B(x, 2\kappa r_0).
	\end{equation}
	Let us consider the following two cases:
	
	{\sl Case 1:} $r\leq 2\kappa r_0$. Then, by \eqref{17:14, 14/10/2023},
	$$\log\left(e+\frac{\rho(x_0)}{r_0}\right)\leq \log\left(e+c_0 2^{k_0} 2\kappa\frac{\rho(x)}{r}\right) \leq c_0 2^{k_0} 2\kappa C_0^{1/{\mathfrak d}} \log\left(e+\frac{\rho(x)}{r}\right).$$

	{\sl Case 2:} $r= 2\kappa r_0 \lambda$ with $\lambda>1$. Then, by \eqref{17:15, 14/10/2023} and \eqref{16:53, 14/10/2023},
	$$\mu(B(x,2\kappa r_0))= \mu(B(x,r)) =\mu(B(x, 2\kappa r_0 \lambda))\geq C_0^{-1} \lambda^{\mathfrak d}\mu(B(x,2\kappa r_0)).$$
	This implies that
	$$r= 2\kappa r_0 \lambda\leq 2\kappa r_0 C_0^{1/{\mathfrak d}}.$$
	Therefore, by \eqref{17:14, 14/10/2023}, 
	$$\log\left(e+\frac{\rho(x_0)}{r_0}\right)\leq \log\left(e+c_0 2^{k_0} 2\kappa C_0^{1/{\mathfrak d}}\frac{\rho(x)}{r}\right) \leq c_0 2^{k_0} 2\kappa C_0^{1/{\mathfrak d}} \log\left(e+\frac{\rho(x)}{r}\right),$$
	which proves \eqref{17:23, 14/10/2023}, and thus \eqref{11:27, 14/10/2023} holds. This completes the proof of Lemma \ref{08:21, 15/10/2023}.

\end{proof}

	\begin{lemma}\label{17:53, 11/10/2023}
		Let $s\in [1,\infty)$ and $q\in (s,\infty]$. Then, there exists a positive constant $C$ such that
		\begin{enumerate}[\rm (i)]
			\item\label{14:01, 12/10/2023} for any $(H^1_\rho,q)$-atom $\a$ related to the ball $B$ and any $f\in BMO(\X)$,
			$$\|\a(f-f_B)\|_{L^s}\leq C\mu(B)^{\frac{1}{s}-1}\|f\|_{BMO};$$
			
			\item\label{14:00, 12/10/2023} for any $(H^1_\rho,q)$-atom $\a$ related to the ball $B$ and any $f,g\in BMO(\X)$,
			$$\|\a(f-f_B)(g-g_B)\|_{L^1}\leq C \|f\|_{BMO}\|g\|_{BMO}.$$
		\end{enumerate}
	\end{lemma}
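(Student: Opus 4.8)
The plan is to treat the two items by Hölder's inequality together with the higher-integrability of $BMO$ functions supplied by Remark 2.2(i) (i.e. $\left(\mu(B)^{-1}\int_B|f-f_B|^p\right)^{1/p}\lesssim\|f\|_{BMO}$ for every $p\in[1,\infty)$), the support and size normalizations of an $(H^1_\rho,q)$-atom, and a standard interpolation of exponents. For item \eqref{14:01, 12/10/2023}, since $\operatorname{supp}\a\subset B$, I would write
$$\|\a(f-f_B)\|_{L^s}\leq \|\a\|_{L^q}\,\|(f-f_B)\chi_B\|_{L^t},$$
where $t\in[1,\infty)$ is chosen by $\frac1s=\frac1q+\frac1t$ (this is possible precisely because $q>s$; when $q=\infty$ take $t=s$). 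Using $\|\a\|_{L^q}\leq\mu(B)^{1/q-1}$ and Remark 2.2(i) with $p=t$, namely $\|(f-f_B)\chi_B\|_{L^t}\lesssim\mu(B)^{1/t}\|f\|_{BMO}$, the product is $\lesssim\mu(B)^{1/q-1}\mu(B)^{1/t}\|f\|_{BMO}=\mu(B)^{1/s-1}\|f\|_{BMO}$, which is the claim.

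For item \eqref{14:00, 12/10/2023}, I would again use $\operatorname{supp}\a\subset B$ and apply Hölder's inequality with three exponents: pick $t_1,t_2\in[1,\infty)$ with $1=\frac1q+\frac1{t_1}+\frac1{t_2}$ — again available because $q>s\ge 1$, so $\frac1q<1$, and we have two free exponents to split the remaining mass, e.g. $t_1=t_2=\frac{2q}{q-1}$ (and $t_1=t_2=2$ if $q=\infty$). Then
$$\|\a(f-f_B)(g-g_B)\|_{L^1}\leq\|\a\|_{L^q}\,\|(f-f_B)\chi_B\|_{L^{t_1}}\,\|(g-g_B)\chi_B\|_{L^{t_2}},$$
and applying $\|\a\|_{L^q}\leq\mu(B)^{1/q-1}$ together with Remark 2.2(i) at exponents $t_1$ and $t_2$ gives the bound $\lesssim\mu(B)^{1/q-1}\mu(B)^{1/t_1}\|f\|_{BMO}\,\mu(B)^{1/t_2}\|g\|_{BMO}=\|f\|_{BMO}\|g\|_{BMO}$, since the powers of $\mu(B)$ sum to $\frac1q-1+\frac1{t_1}+\frac1{t_2}=0$.

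The only point requiring a little care — and the one I would flag as the main (minor) obstacle — is the bookkeeping of the auxiliary exponents so that everything stays in the legitimate range $[1,\infty)$ for which Remark 2.2(i) applies: one must verify $\frac1q+\frac1{t}=\frac1s$ with $t\ge 1$ in (i), and $\frac1q+\frac1{t_1}+\frac1{t_2}=1$ with $t_1,t_2\ge 1$ in (ii), both of which hinge on the hypothesis $q>s\ge 1$, and to handle the endpoint $q=\infty$ separately (there $\a\in L^\infty$ with $\|\a\|_{L^\infty}\le\mu(B)^{-1}$ and the Hölder splitting is $L^s=L^\infty\cdot L^s$, resp. $L^1=L^\infty\cdot L^2\cdot L^2$). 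No further estimates are needed; the constant $C$ depends only on $q$, $s$, and the doubling data through Remark 2.2(i).
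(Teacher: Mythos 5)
Your proposal is correct and follows essentially the same route as the paper: Hölder's inequality on $B$ with the splitting $\frac1s=\frac1q+\frac1t$ for item (i) and $1=\frac1q+\frac1{2q'}+\frac1{2q'}$ for item (ii), combined with the $L^p$ form of the John--Nirenberg estimate from Remark 2.2(i); even your choice of auxiliary exponents $t_1=t_2=2q'$ matches the paper's. The exponent bookkeeping you flag is handled in the paper exactly as you describe, so no changes are needed.
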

	
	\begin{proof}
		\eqref{14:01, 12/10/2023} Let $p\in [s,\infty)$ be such that $\frac{1}{s}=\frac{1}{q}+\frac{1}{p}$. Then, by the generalized H\"older inequality and Remark \ref{14:22, 18/10/2023}\eqref{09:16, 29/10/2023},
		\begin{align*}
			\|\a(f-f_B)\|_{L^s}&=\|\a(f-f_B)\|_{L^s(B)}\\
			&\leq \|\a\|_{L^q(B)}\|f-f_B\|_{L^p(B)}\\
			&\lesssim \mu(B)^{\frac{1}{q}-1} \mu(B)^{\frac{1}{p}}\|f\|_{BMO}=  \mu(B)^{\frac{1}{s}-1}\|f\|_{BMO}.
		\end{align*}

		\eqref{14:00, 12/10/2023} It follows from the generalized H\"older inequality and Remark \ref{14:22, 18/10/2023}\eqref{09:16, 29/10/2023} that
		\begin{align*}
			\|\a(f-f_B)(g-g_B)\|_{L^1}&=\|\a(f-f_B)(g-g_B)\|_{L^1(B)}\\
			&\leq \|\a\|_{L^q(B)}\|f-f_B\|_{L^{2q'}(B)}\|g-g_B\|_{L^{2q'}(B)}\\
			&\lesssim \mu(B)^{\frac{1}{q}-1}  \mu(B)^{\frac{1}{2q'}}\|f\|_{BMO} \mu(B)^{\frac{1}{2q'}}\|g\|_{BMO}\\
			&\lesssim \|f\|_{BMO}\|g\|_{BMO}.
		\end{align*}
		
	\end{proof}

For any $x_0\in\X$, we define $g_{x_0}:\X\to \R$ as follows
\begin{equation}\label{17:18, 11/10/2023}
	g_{x_0}(x)=\max\left\{0, 1+ \log\frac{\rho(x_0)}{d(x,x_0)}\right\}.
\end{equation}
Then, we have the following lemma whose proof will be given in Appendix (see Proposition \ref{14:47, 18/10/2023}). 

\begin{lemma}\label{17:37, 11/10/2023}
	There exists a positive constant $C$ such that for all $x_0\in\X$,
	$$\|g_{x_0}\|_{BMO_\rho}\leq C.$$
	Moreover, if $0<r<\rho(x_0)$ then $\log\left(e+\frac{\rho(x_0)}{r}\right)\leq 2 g_{x_0}(x)$ for all $x\in B(x_0,r)$.
\end{lemma}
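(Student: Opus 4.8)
The plan is to rewrite $g_{x_0}$ in a more convenient form and then bound the two constituents of its $BMO_\rho$ norm separately. Since $1+\log\frac{\rho(x_0)}{t}=\log\frac{e\,\rho(x_0)}{t}$, putting $R:=e\,\rho(x_0)$ we have $g_{x_0}(x)=\big(\log\frac{R}{d(x,x_0)}\big)_{+}$, so $g_{x_0}\ge 0$, $g_{x_0}$ is supported in $B(x_0,R)$, and $\|g_{x_0}\|_{BMO_\rho}=\|g_{x_0}\|_{BMO}+\sup_{B=B(y_0,s),\,s\ge\rho(y_0)}|g_{x_0}|_{B}$ with $|g_{x_0}|_{B}=\frac1{\mu(B)}\int_B g_{x_0}\,d\mu$. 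The ``moreover'' assertion is then immediate: if $0<d(x,x_0)<r<\rho(x_0)$ then $g_{x_0}(x)=1+\log\frac{\rho(x_0)}{d(x,x_0)}\ge 1+\log\frac{\rho(x_0)}{r}$, while $e+\frac{\rho(x_0)}{r}\le e^{2}\,\frac{\rho(x_0)}{r}$ gives $\log\!\big(e+\frac{\rho(x_0)}{r}\big)\le 2+\log\frac{\rho(x_0)}{r}\le 2\big(1+\log\frac{\rho(x_0)}{r}\big)\le 2\,g_{x_0}(x)$.

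The single computational ingredient I would isolate is the estimate
$$\int_{B(z,R)}\log\frac{R}{d(x,z)}\,d\mu(x)\le C\,\mu(B(z,R))\qquad(z\in\X,\ R>0),$$
proved by decomposing $B(z,R)$ into the dyadic annuli $B(z,2^{-j}R)\setminus B(z,2^{-j-1}R)$, $j\ge0$, bounding the integrand there by $(j+1)\log 2$, and summing with the lower bound $\mu(B(z,2^{-j}R))\le C_0\,2^{-j\mathfrak d}\,\mu(B(z,R))$ from \eqref{16:53, 14/10/2023}, which converges since $\mathfrak d>0$.

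For $\|g_{x_0}\|_{BMO}$ I would argue ball by ball, using $MO(f,B)\le 2\inf_{c\in\R}\frac1{\mu(B)}\int_B|f-c|\,d\mu$. Fix $B=B(z,t)$. If $d(z,x_0)>2\kappa t$ then $d(x,x_0)\sim d(z,x_0)$ for all $x\in B$ with constants depending only on $\kappa$, so $\log\frac{R}{d(\cdot,x_0)}$ has oscillation $\lesssim1$ on $B$, and since $t\mapsto t_{+}$ is $1$-Lipschitz, $MO(g_{x_0},B)\lesssim1$. If $d(z,x_0)\le2\kappa t$ then $B\subset B(x_0,c_\kappa t)$; when moreover $c_\kappa t\ge R$, the ball $B(x_0,R)$ is a fixed dilate of $B$, so the displayed estimate and doubling give $\frac1{\mu(B)}\int_B g_{x_0}\le\frac1{\mu(B)}\int_{B(x_0,R)}\log\frac{R}{d(x,x_0)}\,d\mu(x)\lesssim\frac{\mu(B(x_0,R))}{\mu(B)}\lesssim1$; and when $c_\kappa t<R$ we have $d(\cdot,x_0)<R$ throughout $B$, hence $g_{x_0}=\log R-\log d(\cdot,x_0)$ on $B$ and $MO(g_{x_0},B)=MO(\log d(\cdot,x_0),B)$, which I would bound by comparing with the constant $\log t$ and applying the displayed estimate to $B\cap B(x_0,t)\subset B(x_0,t)$ together with doubling. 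This last sub-case, where $x_0$ lies inside $B$ and $d(\cdot,x_0)$ may be arbitrarily small, is the only genuinely delicate point — it is exactly the classical fact that $\log d(\cdot,x_0)$ belongs to $BMO$ with a norm independent of $x_0$.

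It remains to bound $\sup_{B=B(y_0,s),\,s\ge\rho(y_0)}|g_{x_0}|_{B}$. If $g_{x_0}\equiv0$ on $B$ there is nothing to do, so pick $x_1\in B$ with $d(x_1,x_0)<R$; then $d(x_0,y_0)\le\kappa\big(d(x_0,x_1)+d(x_1,y_0)\big)<\kappa(R+s)$. Admissibility \eqref{admissible function} together with $s\ge\rho(y_0)$ now forces $\rho(x_0)\lesssim s$: if $R\le s$ this is immediate, and if $R>s$ then $d(x_0,y_0)<2\kappa R$ and \eqref{admissible function} gives $\rho(x_0)\le c_0(1+2\kappa e)^{k_0}\rho(y_0)\le c_0(1+2\kappa e)^{k_0}s$. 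Hence $R\lesssim s$ and $d(x_0,y_0)\lesssim s$, so $B(x_0,R)$ is contained in a fixed dilate of $B$, and the displayed estimate plus doubling yield $|g_{x_0}|_{B}\le\frac1{\mu(B)}\int_{B(x_0,R)}\log\frac{R}{d(x,x_0)}\,d\mu(x)\lesssim1$, with a constant independent of $x_0,y_0,s$. Combining the three bounds gives $\|g_{x_0}\|_{BMO_\rho}\le C$.
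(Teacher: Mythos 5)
Your proof is correct and follows essentially the same route as the paper's (Proposition \ref{14:47, 18/10/2023}): the dyadic-annuli estimate $\int_{B(z,R)}\log\frac{R}{d(x,z)}\,d\mu\lesssim\mu(B(z,R))$ via reverse doubling, the two-case split (ball far from $x_0$ versus containing a dilate around $x_0$) for the $BMO$ seminorm, and the admissibility inequality to force $\rho(x_0)\lesssim s$ in the localized average. The only cosmetic difference is that the paper first establishes $\|\log d(\cdot,x_0)\|_{BMO}\lesssim 1$ as a separate statement and then passes to $g_{x_0}$ by truncation, whereas you fold that into the case analysis; you also spell out the elementary ``moreover'' inequality, which the paper leaves implicit.
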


It should be pointed out that a similar result to the above lemma was obtained in \cite[Lemma 2.2]{LY}, but there the authors required $d$ is a metric and $\mathfrak{d}=\mathfrak{n}$, where $\mathfrak{d}$ and $\mathfrak{n}$ are as in \eqref{16:53, 14/10/2023}.

	\begin{lemma}\label{07:01, 12/10/2023}
		Let $q\in (1,\infty]$. Then, a $BMO$-function $f$ is in $BMO_{\rho}^{\log}(\X)$ if and only if $\a(f-f_{B})\in H^1_\rho(\X)$ for all $(H^1_\rho,q)$-atoms $\a$ related to the balls $B$. Moreover,
		$$\|f\|_{BMO_{\rho}^{\log}}\sim \|f\|_{BMO}+ \sup_{\text{$(H^1_\rho,q)$-atom $\a$ related to $B$}} \|\a(f-f_{B})\|_{H^1_\rho}.$$		
	\end{lemma}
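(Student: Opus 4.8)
The plan is to prove the two directions separately, using the molecular characterization of $H^1_\rho(\X)$ (Theorem 3.1) together with the duality $\left(H^1_\rho(\X)\right)^* = BMO_\rho(\X)$ (Theorem 2.5(ii)) and the auxiliary product estimates of Lemma 3.6. For the ``only if'' direction, suppose $f \in BMO_\rho^{\log}(\X)$ and let $\a$ be an $(H^1_\rho, q)$-atom related to $B = B(x_0, r)$ with $r < \rho(x_0)$. I would show directly that $\m := c\,\a(f - f_B)$ is an $(H^1_\rho, q', \omega_1, \omega_2)$-molecule of log-type related to $B$ for a suitable constant $c \sim (\|f\|_{BMO} + \|f\|_{BMO_\rho^{\log}})^{-1}$ and some choice of $\omega_1, \omega_2$ satisfying the log-Dini condition. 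Support in $B$ is immediate; condition (a) of Definition 3.1(ii) on the annuli $S_j(B)$ is vacuous for $j \geq 1$ (the function is supported in $B = S_0(B)$), and for $j = 0$ it follows from Lemma 3.6(i) with $s = q'$ (choosing $q$ large enough that $q' < q$, or rather re-reading the index bookkeeping: one applies Lemma 3.6(i) with the atom's exponent and $f \in BMO$); condition (b), the cancellation bound $\left|\int_\X \m\, d\mu\right| \leq \frac{1}{\log(e + \rho(x_0)/r)}$, is exactly where the $BMO_\rho^{\log}$ hypothesis enters: since $\int_B \a = 0$ when $r < \rho(x_0)/4$ we have $\int_\X \a(f-f_B)\,d\mu = \int_B \a(x)(f(x)-f_B)\,d\mu(x)$, which by Hölder and Lemma 3.3 (or directly Remark 2.4(i)) is controlled by $\|\a\|_{L^q}\,\mu(B)^{1/q'}\left(\frac{1}{\mu(B)}\int_B|f-f_B|^{q'}\right)^{1/q'} \lesssim \|f\|_{BMO_\rho^{\log}}/\log(e+\rho(x_0)/r)$; when $\rho(x_0)/4 \leq r < \rho(x_0)$ the factor $\log(e+\rho(x_0)/r)$ is bounded above and below and one just uses the $BMO$ bound. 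Then Lemma 3.3 gives $\|\m\|_{H^1_\rho} \leq C$, i.e. $\|\a(f-f_B)\|_{H^1_\rho} \lesssim \|f\|_{BMO} + \|f\|_{BMO_\rho^{\log}}$.

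For the ``if'' direction, assume $f \in BMO(\X)$ and that $\a(f - f_B) \in H^1_\rho(\X)$ for every $(H^1_\rho, q)$-atom $\a$, with uniform bound $M := \sup_{\a, B}\|\a(f-f_B)\|_{H^1_\rho} < \infty$. Fix a ball $B = B(x_0, r)$; I want to show $\log(e + \rho(x_0)/r)\,MO(f, B) \lesssim \|f\|_{BMO} + M$. The idea is to test against the function $g_{x_0}$ from \eqref{17:18, 11/10/2023}: by Lemma 3.5, $\|g_{x_0}\|_{BMO_\rho} \leq C$ and $\log(e + \rho(x_0)/r) \leq 2 g_{x_0}(x)$ for all $x \in B(x_0, r)$ when $r < \rho(x_0)$. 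Choose the ``extremal'' test function $h := \frac{\mathrm{sign}(f - f_B)\,\chi_B}{\mu(B)}$, which one checks is (a bounded multiple of) an $(H^1_\rho, \infty)$-atom related to $B$ — note it may lack cancellation, but Definition 2.3(ii) only requires cancellation when $r < \rho(x_0)/4$; if $r < \rho(x_0)/4$ one instead uses $h := \frac{(\mathrm{sign}(f-f_B) - c_B)\chi_B}{2\mu(B)}$ with $c_B$ chosen so that $\int_B h = 0$, which is still a normalized atom. Then by hypothesis $\a(f - f_B) \in H^1_\rho$ and by duality $\left|\int_\X \a(f - f_B)\, g_{x_0}\, d\mu\right| \leq \|\a(f-f_B)\|_{H^1_\rho}\|g_{x_0}\|_{BMO_\rho} \lesssim M$; but the left side, using $g_{x_0} \gtrsim \log(e+\rho(x_0)/r)$ on $B$ and the defining property of $h = \a$ (possibly after subtracting the controlled error term coming from $c_B$, which is handled by Lemma 3.6(i) and $\|g_{x_0}\|_{BMO} \lesssim 1$), is bounded below by $\frac{\log(e+\rho(x_0)/r)}{\mu(B)}\int_B|f - f_B|\,d\mu - C\|f\|_{BMO}$. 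Rearranging gives the desired bound for balls with $r < \rho(x_0)$; for balls with $r \geq \rho(x_0)$ the factor $\log(e+\rho(x_0)/r)$ is $\leq 1$, so $\log(e+\rho(x_0)/r)\,MO(f,B) \leq MO(f,B) \leq \|f\|_{BMO}$ is automatic. Taking the supremum over all $B$ yields $\|f\|_{BMO_\rho^{\log}} \lesssim \|f\|_{BMO} + M$, which combined with Remark 2.4(iii) ($\|f\|_{BMO} \leq \|f\|_{BMO_\rho^{\log}}$) gives the claimed norm equivalence.

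The main obstacle I anticipate is the bookkeeping around the cancellation condition: $(H^1_\rho, q)$-atoms are only required to have vanishing integral on small balls ($r < \rho(x_0)/4$), so in the ``only if'' direction the cancellation estimate for $\m = \a(f-f_B)$ on medium-sized balls ($\rho(x_0)/4 \le r < \rho(x_0)$) must be obtained purely from boundedness of the logarithmic weight there, and in the ``if'' direction the test-atom $h$ must be modified on small balls to be genuinely an atom, with the resulting correction term tracked carefully. A secondary technical point is verifying that the molecule condition (a) in Definition 3.1(ii) is satisfied with genuine log-Dini $\omega_1, \omega_2$ — but since $\a(f-f_B)$ is supported in the single ball $B$, only the $j=0$ term is active and the growth functions $\omega_1, \omega_2$ are irrelevant there (one may take any fixed admissible pair, e.g. the ones from Remark 1.1), so this causes no real difficulty. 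Everything else is a direct application of Hölder's inequality, the John–Nirenberg-type estimates in Remark 2.4(i) and Lemma 3.2, duality, and Lemmas 3.3, 3.5, 3.6.
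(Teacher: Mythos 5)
Your proposal is correct and follows essentially the same route as the paper: the forward direction realizes $\a(f-f_B)$ as a bounded multiple of a log-type atom/molecule, with the cancellation bound coming from the $L^{q'}$ self-improvement of the $BMO^{\log}_\rho$ condition (Lemma \ref{08:21, 15/10/2023}; note that plain John--Nirenberg as in Remark \ref{14:22, 18/10/2023}(i) would only give $\|f\|_{BMO}$, not the $1/\log$ gain), and the backward direction tests the extremal mean-zero sign atom against $g_{x_0}$ via $H^1_\rho$--$BMO_\rho$ duality exactly as in the paper. The only cosmetic differences are your case splits at $r=\rho(x_0)/4$ (the paper uniformly uses $\a_0=\tfrac{1}{2\mu(B)}(h-h_{B})\chi_{B}$ and a single Hölder estimate for all $r<\rho(x_0)$) and the exponent choice ($s=q'$ need not lie in $(1,q)$; the paper takes $s=\min\{2,\tfrac{q+1}{2}\}$), neither of which affects the argument.
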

	
	\begin{proof}
		Suppose that $f\in BMO_{\rho}^{\log}(\X)$. Let $s:= \min\{2,\frac{q+1}{2}\}\in (1,q)$. Then, for any $(H^1_\rho,q)$-atom $\a$ related to the ball $B=B(x_0,r)$, Lemma \ref{17:53, 11/10/2023}\eqref{14:01, 12/10/2023} gives
		\begin{equation}\label{06:55, 12/10/2023}
			\|\a(f-f_{B})\|_{L^{s}}\lesssim \|f\|_{BMO}\mu(B)^{\frac{1}{s}-1}\lesssim \|f\|_{BMO_\rho^{\log}}\mu(B)^{\frac{1}{s}-1}.
		\end{equation}
		Moreover, from the H\"older inequality and Lemma \ref{08:21, 15/10/2023}, we get
		\begin{align*}
			\left|\int_{\X} \a(x)(f(x)-f_B)d\mu(x)\right|&\leq \|\a(f-f_B)\|_{L^1(B)}\\
			&\leq \|\a\|_{L^q(B)}\|f-f_B\|_{L^{q'}(B)}\\
			&\lesssim \mu(B)^{\frac{1}{q}-1} \mu(B)^{\frac{1}{q'}} \frac{1}{\log\left(e+\frac{\rho(x_0)}{r}\right)}\|f\|_{BMO_\rho^{\log}}\\
			&\lesssim \|f\|_{BMO_\rho^{\log}}\frac{1}{\log\left(e+\frac{\rho(x_0)}{r}\right)}.
		\end{align*}
		This, together with \eqref{06:55, 12/10/2023} and supp$\,\a(f-f_B)\subset B$, implies that $\a(f-f_B)$ is $C\|f\|_{BMO_\rho^{\log}}$ times an $(H^1_\rho,s)$-atom of log-type related to $B$. Consequently, by Remark \ref{12:09, 22/10/2023}\eqref{20:22, 20/10/2023} and Lemma \ref{07:01, 14/10/2023},
		$$\|\a(f-f_B)\|_{H^1_\rho}\lesssim \|f\|_{BMO_\rho^{\log}}.$$
		Thus, 
		$$\|f\|_{BMO}+ \sup_{\text{$(H^1_\rho,q)$-atom $\a$ related to $B$}} \|\a(f-f_{B})\|_{H^1_\rho}\lesssim \|f\|_{BMO_\rho^{\log}}.$$
		
		Conversely, suppose that $\a(f-f_{B})\in H^1_\rho(\X)$ for all $(H^1_\rho,q)$-atoms $\a$ related to the balls $B$. It suffices to prove that
		\begin{equation}\label{12:10, 04/12/2023}
			\log\left(e+\frac{\rho(x_0)}{r}\right) MO(f, B_0)\lesssim \|f\|_{BMO}+ \sup_{\text{$(H^1_\rho,q)$-atom $\a$ related to $B$}} \|\a(f-f_{B})\|_{H^1_\rho}
		\end{equation}
		for all balls $B_0=B(x_0,r)$. Indeed, the case of $r\geq \rho(x_0)$ is trivial since $MO(f, B_0)\leq \|f\|_{BMO}$. For the case of $0<r<\rho(x_0)$, let $g_{x_0}:\X\to \R$ be as in \eqref{17:18, 11/10/2023} and let $\a_0=\frac{1}{2\mu(B_0)}(h-h_{B_0})\chi_{B_0}$ with $h=\sign (f-f_{B_0})$, then it is easy to see that $\a_0$ is an $(H^1_\rho,q)$-atom related to the ball $B_0$. Moreover, from Lemma \ref{17:37, 11/10/2023} and Lemma \ref{17:53, 11/10/2023}\eqref{14:00, 12/10/2023} and Theorem \ref{06:44, 14/10/2023}\eqref{14:53, 13/12/2023}, we obtain that
		\begin{align*}
			\log\left(e+\frac{\rho(x_0)}{r}\right) MO(f, B_0)&=2 \log\left(e+\frac{\rho(x_0)}{r}\right)\int_{\X} \a_0(x) (f(x)-f_{B_0}) d\mu(x)\\
			&\leq 4 (g_{x_0})_{B_0} \int_{\X} \a_0(x) (f(x)-f_{B_0}) d\mu(x)\\
			&\lesssim \int_{\X} |\a_0(x)| |f(x)-f_{B_0}| |g_{x_0}(x)- (g_{x_0})_{B_0}| d\mu(x) +\\
			&\hskip2cm +\left|\int_{\X} \a_0(x) (f(x)-f_{B_0}) g_{x_0}(x) d\mu(x)\right| \\
			&\lesssim \|f\|_{BMO}\|g_{x_0}\|_{BMO} + \|\a_0(f-f_{B_0})\|_{H^1_\rho} \|g_{x_0}\|_{BMO_\rho}\\
			&\lesssim \|f\|_{BMO} + \sup_{\text{$(H^1_\rho,q)$-atom $\a$ related to $B$}} \|\a(f-f_{B})\|_{H^1_\rho}.
		\end{align*}
		This proves \eqref{12:10, 04/12/2023}, and thus completes the proof of Lemma \ref{07:01, 12/10/2023}.
		
	\end{proof}

	\section{Proof of the main theorems}

	\begin{lemma}\label{14:58, 06/10/2023}
		Let $q\in (1,\infty)$ and $T$ be an $(\omega_1,\omega_2)_\rho$-Calder\'on-Zygmund operator. Then, there exists a positive constant $C$ such that
		\begin{enumerate}[\rm (i)]				
			\item\label{08:35, 18/10/2023} for any $(H^1_\rho,q)$-atom $\a$ related to the ball $B=B(x_0,r)$ and any $j\in\mathbb Z^+_0$,
			$$\|T\a\|_{L^q(S_j(B))}\leq C \left[\omega_1\left(\left((2\kappa)^{\frac{1}{k_0+1}}\right)^{-j}\right)+\omega_2((2\kappa)^{-j})\right] \mu((2\kappa)^{j}B)^{\frac{1}{q}-1};$$

			\item\label{09:59, 15/10/2023} for any $(H^1_\rho,q)$-atom $\a$ related to the ball $B=B(x_0,r)$ and any $g\in BMO(\X)$,			
			$$\|(g-g_B)T\a\|_{L^1}\leq C \|g\|_{BMO}$$
			whenever $\omega_1$ and $\omega_2$ satisfy the log-Dini condition. 
		\end{enumerate}
	\end{lemma}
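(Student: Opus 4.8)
The plan is to deduce part \eqref{09:59, 15/10/2023} from part \eqref{08:35, 18/10/2023}, and to prove \eqref{08:35, 18/10/2023} by an annular decomposition of $\X$. For the reduction, write $\X=\bigcup_{j\geq0}S_j(B)$ as a disjoint union, so that $\|(g-g_B)T\a\|_{L^1}=\sum_{j=0}^{\infty}\int_{S_j(B)}|g-g_B|\,|T\a|\,d\mu$; estimate the $j$-th integral by the Hölder inequality with exponents $q$ and $q'$, then insert \eqref{08:35, 18/10/2023} together with Lemma \ref{14:51, 06/10/2023} (with $p=q'$, applied on the ball $(2\kappa)^{j+1}B$), and use \eqref{16:53, 14/10/2023} to absorb the volume factor $\mu((2\kappa)^{j+1}B)^{1/q'}\mu((2\kappa)^jB)^{1/q-1}\lesssim1$; the $j$-th term is then bounded by a constant multiple of $(j+2)\|g\|_{BMO}\bigl[\omega_1(((2\kappa)^{1/(k_0+1)})^{-j})+\omega_2((2\kappa)^{-j})\bigr]$, and summing over $j$ while invoking Remark \ref{12:09, 22/10/2023}\eqref{16:28, 21/10/2023} gives $\|(g-g_B)T\a\|_{L^1}\lesssim\|g\|_{BMO}(\|\omega_1\|_{\text{log-Dini}}+\|\omega_2\|_{\text{log-Dini}})\lesssim\|g\|_{BMO}$. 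This is the scheme already used for molecules in Lemma \ref{07:01, 14/10/2023}, the only difference being that one may not isolate a term $|g_B\int_\X T\a|$ (there is no cancellation hypothesis on $T$ here), so the whole estimate is carried on the annuli.

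Thus everything reduces to \eqref{08:35, 18/10/2023}. To prove it, first fix a constant $j_0=j_0(\kappa)\in\mathbb{Z}^+$ — a short application of the quasi-triangle inequality shows $j_0=2$ works — such that for all $j\geq j_0$, $\xi\in S_j(B)$ and $y\in B=B(x_0,r)$ one has $d(y,x_0)\leq\frac{d(\xi,y)}{2\kappa}$; for such $j$ one also has $d(\xi,y)\sim(2\kappa)^jr$, hence $\mu(B(\xi,d(\xi,y)))\sim\mu((2\kappa)^jB)$ by the doubling property. For the finitely many $j<j_0$, use that $T$ is bounded on $L^q(\X)$ (Remark \ref{17:15, 24/10/2023}): $\|T\a\|_{L^q(S_j(B))}\leq\|T\a\|_{L^q}\lesssim\|\a\|_{L^q}\leq\mu(B)^{1/q-1}\sim\mu((2\kappa)^jB)^{1/q-1}$, which is dominated by the right-hand side of \eqref{08:35, 18/10/2023} because $\omega_1(((2\kappa)^{1/(k_0+1)})^{-j})+\omega_2((2\kappa)^{-j})$ is bounded below for $j$ in this finite range.

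For $j\geq j_0$ one has $S_j(B)\cap B=\varnothing$, so $T\a(\xi)=\int_BK(\xi,y)\a(y)\,d\mu(y)$, and note $\|\a\|_{L^1}\leq1$. Now split according to the atom. If $r<\frac{\rho(x_0)}{4}$, then $\int_\X\a\,d\mu=0$, so $T\a(\xi)=\int_B\bigl(K(\xi,y)-K(\xi,x_0)\bigr)\a(y)\,d\mu(y)$; the smoothness estimate \eqref{Calderon-Zygmund 2} (legitimate by the choice of $j_0$), the bound $\frac{d(y,x_0)}{d(\xi,y)}\lesssim(2\kappa)^{-j}$, and the monotonicity/doubling of $\omega_2$ (so that $\omega_2(t_0t)\lesssim\omega_2(t)$ for fixed $t_0>1$, cf.\ Remark \ref{12:09, 22/10/2023}\eqref{16:28, 21/10/2023}) give $|T\a(\xi)|\lesssim\omega_2((2\kappa)^{-j})\mu((2\kappa)^jB)^{-1}$; multiplying by $\mu(S_j(B))^{1/q}\lesssim\mu((2\kappa)^jB)^{1/q}$ yields the $\omega_2$-part of the claim. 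If instead $\frac{\rho(x_0)}{4}\leq r<\rho(x_0)$, then $r\sim\rho(x_0)$, and by \eqref{admissible function} $\rho(y)\sim\rho(x_0)$ for $y\in B$; here use the size estimate \eqref{Calderon-Zygmund 1} and Remark \ref{11:36, 02/11/2023} to bound $\bigl(1+\frac{d(\xi,y)}{\rho(\xi)}\bigr)^{-1}\lesssim\bigl(1+\frac{d(\xi,y)}{\rho(y)}\bigr)^{-1/(k_0+1)}\sim\bigl((2\kappa)^{1/(k_0+1)}\bigr)^{-j}$ — the last relation using $d(\xi,y)\sim(2\kappa)^jr\sim(2\kappa)^j\rho(x_0)$, which is where $r\sim\rho(x_0)$ enters — whence $|T\a(\xi)|\lesssim\omega_1\bigl(((2\kappa)^{1/(k_0+1)})^{-j}\bigr)\mu((2\kappa)^jB)^{-1}$, giving the $\omega_1$-part. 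In either case $\|T\a\|_{L^q(S_j(B))}\lesssim\bigl[\omega_1(((2\kappa)^{1/(k_0+1)})^{-j})+\omega_2((2\kappa)^{-j})\bigr]\mu((2\kappa)^jB)^{1/q-1}$, which proves \eqref{08:35, 18/10/2023}.

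I expect the main obstacle to be the bookkeeping of the geometry of the annuli $S_j(B)$: one must make sure that the smoothness hypothesis $d(x,x')\leq d(x,y)/(2\kappa)$ in \eqref{Calderon-Zygmund 2} genuinely applies — which forces one to peel off a bounded number of ``close'' annuli and treat them by $L^q$-boundedness instead of by the kernel estimates — together with the careful transfer between $\rho(\xi)$, $\rho(y)$ and $\rho(x_0)$ via \eqref{admissible function} and Remark \ref{11:36, 02/11/2023} in the no-cancellation case, where the hypothesis $r\sim\rho(x_0)$ is exactly what makes $\bigl(1+d(\xi,y)/\rho(\xi)\bigr)^{-1}$ decay in $j$.
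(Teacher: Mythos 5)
Your proposal is correct and follows essentially the same route as the paper: part (ii) is deduced from part (i) by the identical annular H\"older/log-Dini summation, and part (i) is proved by the same case split on $r$ versus $\rho(x_0)/4$, using the cancellation of the atom together with the smoothness estimate \eqref{Calderon-Zygmund 2} in one case and the size estimate \eqref{Calderon-Zygmund 1} together with Remark \ref{11:36, 02/11/2023} in the other. The only cosmetic difference is that by centering the H\"ormander-type estimate at $y$ rather than at $x_0$ you are forced to treat the annuli $j=0,1$ by $L^q$-boundedness, whereas the paper's choice of base point $x_0$ (which only needs $d(y,x_0)\leq d(x,x_0)/(2\kappa)$) makes the kernel estimates applicable for every $j\geq 1$.
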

	
	\begin{proof}		
		\eqref{08:35, 18/10/2023} The case $j=0$ is trivial since $T$ is bounded on $L^q(\X)$. When $j\in\mathbb Z^+$, for any $x\in S_j(B)$, we consider the following two cases:
		
		{\sl Case 1:} $0<r<\frac{\rho(x_0)}{4}$. Then $\int_{\X} \a(y)d\mu(y)=0$, and thus the condition \eqref{Calderon-Zygmund 2} gives
		\begin{align}\label{21:52, 29/10/2023}
			|T\a(x)|&=\left|\int_B (K(x,y)-K(x,x_0))\a(y)d\mu(y)\right| \nonumber\\
			&\lesssim  \int_B \frac{1}{\mu(B(x_0,d(x,x_0)))} \omega_2\left(\frac{d(y,x_0)}{d(x,x_0)}\right) |\a(y)|d\mu(y)\nonumber\\
			&\lesssim \omega_2((2\kappa)^{-j}) \mu((2\kappa)^j B)^{-1}.
		\end{align}
		
		{\sl Case 2:} $\frac{\rho(x_0)}{4}\leq r<\rho(x_0)$. Then, for any $y\in B$, it follows from \eqref{16:19, 05/10/2023} that
		$$d(x,y)\geq \frac{d(x,x_0)}{\kappa}-d(y,x_0)\geq \frac{d(x,x_0)}{\kappa} - \frac{d(x,x_0)}{(2\kappa)^j}\geq \frac{d(x,x_0)}{2\kappa}$$
		and
		$$B(x_0, d(x,x_0))\subset B(x,(2\kappa)^2 d(x,y)).$$
		This, together with the condition \eqref{Calderon-Zygmund 1}, Remark \ref{17:15, 24/10/2023}\eqref{17:16, 24/10/2023} and $\rho(x_0)\leq 4 r$, implies that
		\begin{align*}
			|K(x,y)|&\lesssim \frac{1}{\mu(B(x,d(x,y)))}\omega_1\left(\left(1+\frac{d(x,y)}{\rho(x)}\right)^{-1}\right)\\
			&\lesssim \frac{1}{\mu(B(x_0,d(x,x_0)))}\omega_1\left(\left(1+\frac{d(x,x_0)}{\rho(x_0)}\right)^{-\frac{1}{k_0+1}}\right)\\
			&\lesssim \omega_1\left(\left((2\kappa)^{\frac{1}{k_0+1}}\right)^{-j}\right) \mu((2\kappa)^j B)^{-1}.
		\end{align*}
		Therefore,
		\begin{align}\label{21:53, 29/10/2023}
			|T\a(x)|=\left|\int_B K(x,y) \a(y)d\mu(y)\right|\lesssim \omega_1\left(\left((2\kappa)^{\frac{1}{k_0+1}}\right)^{-j}\right) \mu((2\kappa)^j B)^{-1}.
		\end{align}
	
		Combining \eqref{21:52, 29/10/2023} and \eqref{21:53, 29/10/2023}, we obtain that
		\begin{equation}\label{09:22, 15/10/2023}
			\|T\a\|_{L^q(S_j(B))}\lesssim \left[\omega_1\left(\left((2\kappa)^{\frac{1}{k_0+1}}\right)^{-j}\right)+\omega_2((2\kappa)^{-j})\right] \mu((2\kappa)^{j}B)^{\frac{1}{q}-1}.
		\end{equation}
		
		\eqref{09:59, 15/10/2023} By the H\"older inequality, Lemma \ref{14:51, 06/10/2023}, \eqref{09:22, 15/10/2023} and Remark \ref{12:09, 22/10/2023}\eqref{16:28, 21/10/2023},		
		\begin{align*}
			\|(g-g_B)T\a\|_{L^1}&=  \sum_{j=0}^\infty \|(g-g_B)T\a\|_{L^1(S_j(B))}\\
			&\leq \sum_{j=0}^\infty \|g-g_B\|_{L^{q'}((2\kappa)^{j+1} B)}\|T\a\|_{L^q(S_j(B))}\\
			&\lesssim \sum_{j=0}^\infty (j+2)\mu((2\kappa)^{j+1} B)^{\frac{1}{q'}}\|g\|_{BMO}\times\\
			&\hskip1cm\times \left[\omega_1\left(\left((2\kappa)^{\frac{1}{k_0+1}}\right)^{-j}\right)+\omega_2((2\kappa)^{-j})\right] \mu((2\kappa)^{j}B)^{\frac{1}{q}-1}\\
			&\lesssim \|g\|_{BMO}\sum_{j=0}^\infty (j+1) \left[\omega_1\left(\left((2\kappa)^{\frac{1}{k_0+1}}\right)^{-j}\right)+\omega_2((2\kappa)^{-j})\right]  \\
			&\lesssim \|g\|_{BMO} (\|\omega_1\|_{\text{log-Dini}}+\|\omega_2\|_{\text{log-Dini}})\lesssim \|g\|_{BMO}.
		\end{align*}
		
	\end{proof}

	Now we are ready to give the proofs of the main theorems.

	\begin{proof}[Proof of Theorem \ref{14:43, 02/11/2023}]
		\eqref{17:52, 10/10/2023} By \cite[Proposition 3.2]{YZ}, it suffices to prove that
		$$\|T\a\|_{L^1}\lesssim 1$$
		for all $(H^1_\rho,2)$-atoms $\a$ related to the balls $B\subset \X$. Indeed, since the H\"older inequality and Lemma \ref{14:58, 06/10/2023}\eqref{08:35, 18/10/2023} and Remark \ref{12:09, 22/10/2023}\eqref{16:28, 21/10/2023}, we get
		\begin{align*}
			\|T\a\|_{L^1} &=\sum_{j=0}^\infty \int_{S_j(B)} |T\a(x)|d\mu(x)\\
			&\leq \sum_{j=0}^\infty \mu(S_j(B))^{\frac{1}{2}}\|T\a\|_{L^2(S_j(B))}\\
			&\lesssim  \sum_{j=0}^\infty \mu((2\kappa)^{j} B)^{\frac{1}{2}} \left[\omega_1\left(\left((2\kappa)^{\frac{1}{k_0+1}}\right)^{-j}\right)+\omega_2((2\kappa)^{-j})\right] \mu((2\kappa)^{j}B)^{\frac{1}{2}-1}\\
			&\lesssim (\|\omega_1\|_{\text{Dini}}+\|\omega_2\|_{\text{Dini}})\lesssim 1.
		\end{align*}

		\eqref{17:50, 10/10/2023} By \cite[Proposition 3.2]{YZ} and \cite[Theorem (4.1)]{CW}, it suffices to prove that
		$$\left|\int_{\X} g(x) T\a(x) d\mu(x)\right|\lesssim \|g\|_{BMO}$$
		for all $(H^1_\rho,2)$-atoms $\a$ related to the balls $B=B(x_0,r)\subset \X$ and all $g\in C_c(\X)$. Indeed, since $\int_{\X} T\a(x)d\mu(x)=\langle T^*1,\a\rangle =0$ and Lemma \ref{14:58, 06/10/2023}\eqref{09:59, 15/10/2023}, we get
		$$\left|\int_{\X} g(x) T\a(x) d\mu(x)\right| = \left|\int_{\X} (g(x)-g_B) T\a(x) d\mu(x)\right|\lesssim \|g\|_{BMO}.$$
	\end{proof}

	\begin{proof}[Proof of Theorem \ref{18:02, 10/10/2023}]
		\eqref{17:59, 10/10/2023} $\Rightarrow$ \eqref{18:01, 10/10/2023} Suppose that $T$ is bounded on $H^1_\rho(\X)$. For any $(H^1_\rho,q)$-atom $\a$ related to the ball $B=B(x_0,r)$, let $g_{x_0}:\X\to [0,\infty)$ be as in \eqref{17:18, 11/10/2023}, then it follows from Lemma \ref{17:37, 11/10/2023} and Lemma \ref{14:58, 06/10/2023}\eqref{09:59, 15/10/2023} that	
		\begin{align*}
			\log\left(e+\frac{\rho(x_0)}{r}\right) \left|\int_{\X}T\a(x)d\mu(x)\right|&\lesssim \left|(g_{x_0})_B\right| \left|\int_{\X}T\a(x)d\mu(x)\right|\\
			&\lesssim \left|\int_{\X}T\a(x) g_{x_0}(x)d\mu(x)\right| + \\
			&\hskip1.2cm +\left|\int_{\X}T\a(x) (g_{x_0}(x)-(g_{x_0})_B)d\mu(x)\right| \\
			&\lesssim \|T\|_{H^1_\rho\to H^1_\rho}\|\a\|_{H^1_\rho}\|g_{x_0}\|_{BMO_\rho} +  \|g_{x_0}\|_{BMO}\lesssim 1.
		\end{align*}
		
		\eqref{18:01, 10/10/2023} $\Rightarrow$ \eqref{18:00, 10/10/2023}  Suppose that
		\begin{equation}\label{14:36, 12/10/2023}
			\left|\int_{\X}T\a(x)d\mu(x)\right|\lesssim \frac{1}{\log\left(e+\frac{\rho(x_0)}{r}\right)}
		\end{equation}
		for all $(H^1_\rho,q)$-atoms $\a$ related to the balls $B=B(x_0,r)$. By Lemma \ref{07:01, 12/10/2023} and Corollary \ref{12:49, 04/12/2023}\eqref{12:50, 04/12/2023}, it suffices to prove that
		\begin{equation}\label{14:52, 12/10/2023}
			\|\a(T^*1-(T^*1)_B)\|_{H^1_\rho}\lesssim 1
		\end{equation}
		for all $(H^1_\rho,q)$-atoms $\a$ related to the balls $B=B(x_0,r)$. Indeed, let $s:=\min\{2,\frac{q+1}{2}\}\in (1,q)$ then it follows from Lemma \ref{17:53, 11/10/2023}\eqref{14:01, 12/10/2023} and Corollary \ref{12:49, 04/12/2023}\eqref{12:50, 04/12/2023} that
		\begin{equation}\label{14:48, 12/10/2023}
			\|\a(T^*1-(T^*1)_B)\|_{L^{s}}\lesssim  \mu(B)^{\frac{1}{s}-1}\|T^*1\|_{BMO}\lesssim \mu(B)^{\frac{1}{s}-1}.
		\end{equation}
		Moreover, by \eqref{14:36, 12/10/2023}, Remark \ref{12:09, 22/10/2023}\eqref{14:29, 12/10/2023} and Corollary \ref{12:49, 04/12/2023}\eqref{12:50, 04/12/2023}, we get
		\begin{align*}
			\left|\int_{\X} \a(x)(T^*1(x)-(T^*1)_B)d\mu(x)\right|&\leq \left|\int_{\X} T\a(x)d\mu(x)\right|+\left|(T^*1)_B\int_{\X}\a(x)d\mu(x)\right|\\
			&\lesssim \frac{1}{\log\left(e+\frac{\rho(x_0)}{r}\right)}+ \frac{\|T^*1\|_{BMO_\rho}}{\log\left(e+\frac{\rho(x_0)}{r}\right)}\\
			&\lesssim \frac{1}{\log\left(e+\frac{\rho(x_0)}{r}\right)}.
		\end{align*}
		This, together with \eqref{14:48, 12/10/2023} and supp$\, \a(T^*1-(T^*1)_B)\subset B$, implies that $\a(T^*1-(T^*1)_B)$ is $C$ times an $(H^1_\rho,s)$-atom of log-type related to $B$. Thus \eqref{14:52, 12/10/2023} holds due to Remark \ref{12:09, 22/10/2023}\eqref{20:22, 20/10/2023} and Lemma \ref{07:01, 14/10/2023}.

		\eqref{18:00, 10/10/2023}$\,\Rightarrow\,$\eqref{17:59, 10/10/2023} Suppose that $T^*1\in BMO_\rho^{\log}(\X)$. By \cite[Proposition 3.2]{YZ} and Lemma \ref{07:01, 14/10/2023}, we only need to establish that $T\a$ is $C$ times an $(H^1_\rho,2,\omega_1,\omega_2)$-molecule of log-type for all $(H^1_\rho,2)$-atoms $\a$ related to the balls $B=B(x_0,r)$. Indeed, by Lemma \ref{14:58, 06/10/2023}\eqref{08:35, 18/10/2023}, it suffices to prove that
		\begin{equation}\label{17:15, 08/12/2023}
			\left|\int_{\X} T\a(x) d\mu(x)\right|\lesssim \frac{1}{\log\left(e+\frac{\rho(x_0)}{r}\right)}.
		\end{equation}
	
		Let $g_{x_0}:\X\to [0,\infty)$ be as in \eqref{17:18, 11/10/2023}. As $T^*1\in BMO_\rho^{\log}(\X)$, Lemma \ref{07:01, 12/10/2023} gives
		$$\|\a(T^*1-(T^*1)_B)\|_{H^1_\rho}\lesssim 1.$$
		This, together with Lemma \ref{17:37, 11/10/2023} and Lemma \ref{17:53, 11/10/2023}\eqref{14:00, 12/10/2023}, implies that
		\begin{align*}
			&\log\left(e+\frac{\rho(x_0)}{r}\right) \left|\int_{\X} \a(x)(T^*1(x)-(T^*1)_B)d\mu(x)\right|\\
			&\hskip3cm \leq 2 |(g_{x_0})_B| \left|\int_{\X} \a(x)(T^*1(x)-(T^*1)_B)d\mu(x)\right|\\
			&\hskip3cm\lesssim \left|\int_{\X} \a(x)(T^*1(x)-(T^*1)_B) g_{x_0}(x)d\mu(x)\right| + \\
			&\hskip4cm+ \left|\int_{\X} \a(x)(T^*1(x)-(T^*1)_B) (g_{x_0}(x)-(g_{x_0})_B)d\mu(x)\right| \\
			&\hskip3cm\lesssim \|\a(T^*1-(T^*1)_B)\|_{H^1_\rho}\|g_{x_0}\|_{BMO_\rho} + \|T^*1\|_{BMO} \|g_{x_0}\|_{BMO}\lesssim 1.
		\end{align*}	
		Therefore, by Remark \ref{12:09, 22/10/2023}\eqref{14:29, 12/10/2023} and Corollary \ref{12:49, 04/12/2023}\eqref{12:50, 04/12/2023},
		\begin{align*}
			\left|\int_{\X} T\a(x) d\mu(x)\right|&=\left|\int_{\X} \a(x) T^*1(x) d\mu(x)\right|\\
			&\leq \left|\int_{\X} \a(x)(T^*1(x)-(T^*1)_B)d\mu(x)\right|+ \left|(T^*1)_B \int_{\X} \a(x)d\mu(x)\right|\\
			&\lesssim \frac{1}{\log\left(e+\frac{\rho(x_0)}{r}\right)}+ \frac{\|T^*1\|_{BMO_\rho}}{\log\left(e+\frac{\rho(x_0)}{r}\right)}\lesssim \frac{1}{\log\left(e+\frac{\rho(x_0)}{r}\right)}.
		\end{align*}
		This proves \eqref{17:15, 08/12/2023}, and thus ends the proof of Theorem \ref{18:02, 10/10/2023}.
				
	\end{proof}

\section{Appendix}

Let $\omega_1, \omega_2: [0,\infty)\to[0,\infty)$ be two modulus of continuity satisfying the Dini condition, $\rho$ be an admissible function on $\X$, and $s\in (1,\infty)$. Based on the ideas from \cite{BPQ, BHQ, BDK, BLL, DLPV22, GLP}, we define the generalized $(\omega_1,\omega_2,s)_\rho$-Calder\'on-Zygmund operators as follows.

\begin{definition}\label{20:36, 21/12/2023}
	A bounded linear operator $T: L^s(\X)\to L^s_{\text{weak}}(\X)$ is called a generalized $(\omega_1,\omega_2,s)_\rho$-Calder\'on-Zygmund operator if 
		$$Tf(x)=\int_{\X} K(x,y)f(y)d\mu(y)$$
	for all $f\in L^s(\X)$ with bounded	support and all $x\notin\,$supp$\,f$, where the kernel $K(x,y)$ satisfies the condition: there exists a positive constant $C$ such that 
	\begin{equation}\label{generalized Calderon-Zygmund 1}
		\left(\int_{R\leq d(x,x_0)< 2\kappa R} \left|K(x,y)\right|^s d\mu(x)\right)^{\frac{1}{s}}\leq C \frac{\omega_1\left(\left(1+\frac{R}{\rho(x_0)}\right)^{-\frac{1}{k_0+1}}\right)}{\mu(B(x_0,R))^{1-\frac{1}{s}}}
	\end{equation}	
	and
	\begin{equation}\label{generalized Calderon-Zygmund 2}
		\left(\int_{R\leq d(x,x_0)< 2\kappa R} |K(x,y)-K(x,x_0)|^s d\mu(x)\right)^{\frac{1}{s}} \leq C \frac{\omega_2\left(\frac{d(y,x_0)}{R}\right)}{\mu(B(x_0,R))^{1-\frac{1}{s}}}
	\end{equation}
	for all $y, x_0\in\X$ and all $R> 2\kappa d(y,x_0)$.
\end{definition}

\begin{remark}\label{21:02, 21/12/2023}
	\begin{enumerate}[\rm (i)]
		\item\label{21:03, 21/12/2023} The condition \eqref{generalized Calderon-Zygmund 2} implies the standard H\"ormander condition 
		\begin{align*}
			\int_{d(x,x_0)>2\kappa d(y,x_0)} \left|K(x,y)-K(x,x_0)\right| d\mu(x) \lesssim \sum_{j=1}^{\infty}\omega_2((2\kappa)^{-j})\lesssim \|\omega_2\|_{\text{Dini}}<\infty
		\end{align*}
		for all $y, x_0\in\X$. Therefore, $T$ is of weak type $(1,1)$, and thus is bounded on $L^q(\X)$ for every $q\in (1,s)$.
		
		\item\label{21:06, 21/12/2023} If $1<q<s<\infty$ and $T$ is a generalized $(\omega_1,\omega_2,s)_\rho$-Calder\'on-Zygmund operator, then $T$ is a generalized $(\omega_1,\omega_2,q)_\rho$-Calder\'on-Zygmund operator. Furthermore, if $T$ is an $(\omega_1,\omega_2)_\rho$-Calder\'on-Zygmund operator as in Definition \ref{10:32, 02/11/2023}, then $T$ is a generalized $(\omega_1,\omega_2,s)_\rho$-Calder\'on-Zygmund operator for every $s\in (1,\infty)$.
		
		\item When $\X=\R^n$ and $\rho=1$, if $T$ is a generalized inhomogeneous $(\varepsilon,\delta,s)$-Calder\'on-Zygmund operator as in \cite{DLPV22}, then $T$ is a generalized $(\omega_1,\omega_2,s)_\rho$-Calder\'on-Zygmund of log-Dini type associated to $\omega_1(t)= (\alpha_1+\log\frac{1}{t})^{-\alpha_1}\chi_{(0,1]}(t)+\alpha_1^{-\alpha_1}\chi_{(1,\infty)}(t)$ and $\omega_2(t)= (\alpha_2+\log\frac{1}{t})^{-\alpha_2}\chi_{(0,1]}(t)+\alpha_2^{-\alpha_2}\chi_{(1,\infty)}(t)$ for any $\alpha_1,\alpha_2>2$. However, the reverse is not true in general.

		\item When $\X=\R^n$ and $\rho=\rho_L$, if $T$ is a generalized $(\delta,s)$-Schr\"odinger-Calder\'on-Zygmund operator as in \cite{BPQ, BHQ, BDK, BLL}, then $T$ is a generalized $(\omega_1,\omega_2,s)_\rho$-Calder\'on-Zygmund of log-Dini type associated to $\omega_1(t)= (\alpha_1+\log\frac{1}{t})^{-\alpha_1}\chi_{(0,1]}(t)+\alpha_1^{-\alpha_1}\chi_{(1,\infty)}(t)$ and $\omega_2(t)= (\alpha_2+\log\frac{1}{t})^{-\alpha_2}\chi_{(0,1]}(t)+\alpha_2^{-\alpha_2}\chi_{(1,\infty)}(t)$ for any $\alpha_1,\alpha_2>2$. However, the reverse is not true in general. 
	\end{enumerate}
	
\end{remark}

\begin{lemma}\label{20:50, 21/12/2023}
	Let $s\in (1,\infty)$, $q\in (1,s)$ and  $T$ be a generalized $(\omega_1,\omega_2,s)_\rho$-Calder\'on-Zygmund operator. Then, there exists a positive constant $C$ such that
	\begin{enumerate}[\rm (i)]
		\item\label{20:51, 21/12/2023} for any $(H^1_\rho,q)$-atom $\a$ related to the ball $B=B(x_0,r)$ and any $j\in\mathbb Z^+_0$,
		$$\|T\a\|_{L^q(S_j(B))}\leq C \left[\omega_1\left(\left((2\kappa)^{\frac{1}{k_0+1}}\right)^{-j}\right)+\omega_2((2\kappa)^{-j})\right] \mu((2\kappa)^{j}B)^{\frac{1}{q}-1};$$
		
		\item\label{20:54, 21/12/2023} for any $(H^1_\rho,q)$-atom $\a$ related to the ball $B=B(x_0,r)$ and any $g\in BMO(\X)$,			
		$$\|(g-g_B)T\a\|_{L^1}\leq C \|g\|_{BMO}$$
		whenever $\omega_1$ and $\omega_2$ satisfy the log-Dini condition. 
	\end{enumerate}
\end{lemma}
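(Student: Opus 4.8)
The strategy is to mirror the proof of Lemma \ref{14:58, 06/10/2023} for the classical $(\omega_1,\omega_2)_\rho$-Calder\'on-Zygmund operators, since the two parts of the present lemma are verbatim the same statements, only with the stronger pointwise kernel bounds \eqref{Calderon-Zygmund 1}--\eqref{Calderon-Zygmund 2} replaced by the integral (weak-$L^s$) bounds \eqref{generalized Calderon-Zygmund 1}--\eqref{generalized Calderon-Zygmund 2}. For part \eqref{20:51, 21/12/2023}, the case $j=0$ is handled by Remark \ref{21:02, 21/12/2023}\eqref{21:03, 21/12/2023}: since $T$ is of weak type $(1,1)$ and bounded on $L^q(\X)$ for $q\in(1,s)$, we have $\|T\a\|_{L^q(S_0(B))}\le \|T\a\|_{L^q(\X)}\lesssim \|\a\|_{L^q}\le \mu(B)^{1/q-1}$. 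For $j\in\mathbb Z^+$, I would split into the same two cases as before according to the size of $r$ relative to $\rho(x_0)$.

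In Case 1, $0<r<\rho(x_0)/4$, so $\int_\X \a\,d\mu=0$ and I can write $T\a(x)=\int_B (K(x,y)-K(x,x_0))\a(y)\,d\mu(y)$ for $x\in S_j(B)$. Taking the $L^q(S_j(B))$-norm in $x$, applying Minkowski's integral inequality to pull the norm inside the $y$-integral, and noting that $S_j(B)$ is contained in an annulus $\{R\le d(x,x_0)<2\kappa R\}$ with $R\sim (2\kappa)^j r$ — or more carefully a bounded union of such annuli — the inner kernel integral is estimated by \eqref{generalized Calderon-Zygmund 2} with this $R$, giving a bound $\omega_2(d(y,x_0)/R)\mu(B(x_0,R))^{1/q-1}\lesssim \omega_2((2\kappa)^{-j})\mu((2\kappa)^j B)^{1/q-1}$ for $y\in B$ (since $d(y,x_0)<r\lesssim (2\kappa)^{-j}R$). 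Integrating $|\a(y)|$ over $B$ and using $\|\a\|_{L^1}\le 1$ finishes this case. In Case 2, $\rho(x_0)/4\le r<\rho(x_0)$, I use \eqref{generalized Calderon-Zygmund 1} directly (no cancellation): the same Minkowski/annulus argument bounds $\|T\a\|_{L^q(S_j(B))}$ by $\int_B \omega_1((1+R/\rho(x_0))^{-1/(k_0+1)})\mu(B(x_0,R))^{1/q-1}|\a(y)|\,d\mu(y)$, and since $R\sim(2\kappa)^j r\sim (2\kappa)^j\rho(x_0)$ we get $(1+R/\rho(x_0))^{-1/(k_0+1)}\lesssim ((2\kappa)^{1/(k_0+1)})^{-j}$, yielding the claimed bound. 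Combining the two cases gives \eqref{20:51, 21/12/2023}.

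Part \eqref{20:54, 21/12/2023} then follows from \eqref{20:51, 21/12/2023} exactly as in the proof of Lemma \ref{14:58, 06/10/2023}\eqref{09:59, 15/10/2023}: decompose $\|(g-g_B)T\a\|_{L^1}=\sum_{j\ge0}\|(g-g_B)T\a\|_{L^1(S_j(B))}$, apply H\"older with exponents $q',q$ on each $S_j(B)\subset (2\kappa)^{j+1}B$, bound $\|g-g_B\|_{L^{q'}((2\kappa)^{j+1}B)}\lesssim (j+2)\mu((2\kappa)^{j+1}B)^{1/q'}\|g\|_{BMO}$ by Lemma \ref{14:51, 06/10/2023}, insert \eqref{20:51, 21/12/2023}, and use the doubling property together with Remark \ref{12:09, 22/10/2023}\eqref{16:28, 21/10/2023} to see the resulting sum is $\lesssim \|g\|_{BMO}(\|\omega_1\|_{\text{log-Dini}}+\|\omega_2\|_{\text{log-Dini}})\lesssim \|g\|_{BMO}$.

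The main technical obstacle is the bookkeeping in part \eqref{20:51, 21/12/2023}: one must verify that each shell $S_j(B)=(2\kappa)^{j+1}B\setminus(2\kappa)^jB$ is covered by finitely many (a number bounded independently of $j$) annuli of the form $\{R\le d(x,x_0)<2\kappa R\}$ centered at $x_0$, so that \eqref{generalized Calderon-Zygmund 1}--\eqref{generalized Calderon-Zygmund 2} can be applied on each piece, and that in Case 1 the hypothesis $R>2\kappa d(y,x_0)$ really holds for $y\in B$ — which needs $j$ large enough that $(2\kappa)^j r > 2\kappa\cdot 2\kappa r$, with the few small values of $j$ absorbed into the constant using the $L^q$-boundedness of $T$. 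Once this geometric reduction is in place, everything else is the same interpolation-of-Dini-sums computation already carried out above.
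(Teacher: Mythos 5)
Your proposal is correct and follows essentially the same route as the paper: the same $j=0$ reduction to $L^q$-boundedness, the same two-case split on $r$ versus $\rho(x_0)/4$ using cancellation with \eqref{generalized Calderon-Zygmund 2} in the first case and \eqref{generalized Calderon-Zygmund 1} with $\rho(x_0)\sim r$ in the second, and the identical H\"older/log-Dini summation for part (ii); the only cosmetic difference is that you invoke Minkowski's integral inequality where the paper uses H\"older plus Fubini (both hinge on Remark \ref{21:02, 21/12/2023}\eqref{21:06, 21/12/2023} to pass from the $L^s$ to the $L^q$ kernel conditions). The ``main technical obstacle'' you flag is vacuous: $S_j(B)$ is by definition exactly one annulus $\{R\le d(x,x_0)<2\kappa R\}$ with $R=(2\kappa)^j r$, and $R>2\kappa d(y,x_0)$ already holds for all $j\geq 1$ since $d(y,x_0)<r$.
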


\begin{proof}
	
	\eqref{08:35, 18/10/2023} The case $j=0$ is trivial since $T$ is bounded on $L^q(\X)$ (see Remark \ref{21:02, 21/12/2023}\eqref{21:03, 21/12/2023}). When $j\in\mathbb Z^+$, we consider the following two cases:
	
	{\sl Case 1:} $0<r<\frac{\rho(x_0)}{4}$. Then $\int_{\X} \a(y)d\mu(y)=0$. Thus, from the H\"older inequality, $\|\a\|_{L^1}\leq 1$, the Fubini theorem, Remark \ref{21:02, 21/12/2023}\eqref{21:06, 21/12/2023} and the condition \eqref{generalized Calderon-Zygmund 2}, we get
	\begin{align}\label{22:41, 29/10/2023}
		\|T\a\|_{L^q(S_j(B))}&=  \left[\int_{(2\kappa)^j r\leq d(x,x_0)<(2\kappa)^{j+1} r} \left|\int_B (K(x,y)-K(x,x_0))\a(y)d\mu(y)\right|^q d\mu(x)\right]^{\frac{1}{q}}\nonumber\\
		&\leq \left[\int_B |\a(y)|d\mu(y) \int_{(2\kappa)^j r\leq d(x,x_0)<(2\kappa)^{j+1} r} |K(x,y)-K(x,x_0)|^q d\mu(x) \right]^{\frac{1}{q}}\nonumber\\
		&\lesssim \omega_2((2\kappa)^{-j}) \mu((2\kappa)^j B)^{\frac{1}{q}-1}.
	\end{align}
	
	{\sl Case 2:} $\frac{\rho(x_0)}{4}\leq r< \rho(x_0)$. Then, from the H\"older inequality, $\|\a\|_{L^1}\leq 1$, the Fubini theorem, Remark \ref{21:02, 21/12/2023}\eqref{21:06, 21/12/2023}, the condition \eqref{generalized Calderon-Zygmund 1} and $\rho(x_0)\sim r$, we get
	\begin{align}\label{22:42, 29/10/2023}
		\|T\a\|_{L^q(S_j(B))}&\leq \left[\int_{(2\kappa)^j r\leq d(x,x_0)<(2\kappa)^{j+1} r} \left(\int_B |K(x,y)| |\a(y)|d\mu(y)\right)^q d\mu(x)\right]^{\frac{1}{q}}\nonumber\\
		&\leq \left[\int_B |\a(y)|d\mu(y) \int_{(2\kappa)^j r\leq d(x,x_0)<(2\kappa)^{j+1} r} |K(x,y)|^q d\mu(x) \right]^{\frac{1}{q}}\nonumber\\
		&\lesssim \frac{\omega_1\left(\left(1+\frac{(2\kappa)^j r}{\rho(x_0)}\right)^{-\frac{1}{k_0+1}}\right)}{\mu((2\kappa)^j B)^{1-\frac{1}{q}}} \lesssim \omega_1\left(\left((2\kappa)^{\frac{1}{k_0+1}}\right)^{-j}\right) \mu((2\kappa)^j B)^{\frac{1}{q}-1}.
	\end{align}
	
	Combining \eqref{22:41, 29/10/2023} and \eqref{22:42, 29/10/2023}, we obtain that
	\begin{equation}\label{22:45, 29/10/2023}
		\|T\a\|_{L^q(S_j(B))}\lesssim \left[\omega_1\left(\left((2\kappa)^{\frac{1}{k_0+1}}\right)^{-j}\right)+\omega_2((2\kappa)^{-j})\right] \mu((2\kappa)^{j}B)^{\frac{1}{q}-1}.
	\end{equation}
	
	\eqref{09:59, 15/10/2023} From the H\"older inequality, Lemma \ref{14:51, 06/10/2023}, \eqref{22:45, 29/10/2023}  and Remark \ref{12:09, 22/10/2023}\eqref{16:28, 21/10/2023}, we get	
	\begin{align*}
		\|(g-g_B)T\a\|_{L^1}&=  \sum_{j=0}^\infty \|(g-g_B)T\a\|_{L^1(S_j(B))}\\
		&\leq \sum_{j=0}^\infty \|g-g_B\|_{L^{q'}((2\kappa)^{j+1} B)}\|T\a\|_{L^q(S_j(B))}\\
		&\lesssim \sum_{j=0}^\infty (j+2)\mu((2\kappa)^{j+1} B)^{\frac{1}{q'}}\|g\|_{BMO}\times\\
		&\hskip2cm\times \left[\omega_1\left(\left((2\kappa)^{\frac{1}{k_0+1}}\right)^{-j}\right)+\omega_2((2\kappa)^{-j})\right] \mu((2\kappa)^{j}B)^{\frac{1}{q}-1}\\
		&\lesssim \|g\|_{BMO}\sum_{j=0}^\infty (j+1) \left[\omega_1\left(\left((2\kappa)^{\frac{1}{k_0+1}}\right)^{-j}\right)+\omega_2((2\kappa)^{-j})\right]  \\
		&\lesssim \|g\|_{BMO} (\|\omega_1\|_{\text{log-Dini}}+\|\omega_2\|_{\text{log-Dini}})\lesssim \|g\|_{BMO}.
	\end{align*}
	
\end{proof}

\begin{theorem}\label{17:12, 24/10/2023}
	Let $s\in (1,\infty)$ and $T$ be a generalized $(\omega_1,\omega_2, s)_\rho$-Calder\'on-Zygmund operator. Then:
	\begin{enumerate}[\rm (i)]
		\item\label{17:52, 10/10/2023} $T$ is bounded from $H^1_\rho(\X)$ into $L^1(\X)$.
		
		\item\label{17:50, 10/10/2023} $T$ is bounded from $H^1_\rho(\X)$ into $H^1(\X)$ if and only if $T^*1=0$ whenever $\omega_1$ and $\omega_2$ satisfy the log-Dini condition. 
	\end{enumerate}
\end{theorem}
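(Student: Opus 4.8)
The plan is to follow the proof of Theorem \ref{14:43, 02/11/2023} essentially verbatim, with Lemma \ref{14:58, 06/10/2023} replaced throughout by its $L^s$-version Lemma \ref{20:50, 21/12/2023}. First I would fix an exponent $q\in(1,s)$; by Remark \ref{21:02, 21/12/2023}\eqref{21:03, 21/12/2023} the operator $T$ is then bounded on $L^q(\X)$, so the atomic estimates of Lemma \ref{20:50, 21/12/2023} apply to $(H^1_\rho,q)$-atoms and everything below is meaningful.

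For part (i), I would use \cite[Proposition 3.2]{YZ} to reduce the assertion to the uniform bound $\|T\a\|_{L^1}\lesssim 1$ over all $(H^1_\rho,q)$-atoms $\a$ associated with balls $B$. Decomposing $\X=\bigcup_{j\ge0}S_j(B)$, estimating each annular piece by H\"older's inequality (using $S_j(B)\subset(2\kappa)^{j+1}B$), and then invoking Lemma \ref{20:50, 21/12/2023}\eqref{20:51, 21/12/2023} together with the doubling property, I expect to arrive at
\begin{align*}
\|T\a\|_{L^1}&\le\sum_{j=0}^\infty\mu\big((2\kappa)^{j+1}B\big)^{1/q'}\,\|T\a\|_{L^q(S_j(B))}\\
&\lesssim\sum_{j=0}^\infty\Big[\omega_1\big(((2\kappa)^{1/(k_0+1)})^{-j}\big)+\omega_2((2\kappa)^{-j})\Big]\lesssim\|\omega_1\|_{\text{Dini}}+\|\omega_2\|_{\text{Dini}}\lesssim1,
\end{align*}
the last comparison being Remark \ref{12:09, 22/10/2023}\eqref{16:28, 21/10/2023}.

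For part (ii), the sufficiency direction runs as follows. Assuming $T^*1=0$, I would combine \cite[Proposition 3.2]{YZ} with the $H^1$--$BMO$ duality \cite[Theorem (4.1)]{CW} to reduce the claim to the estimate $\big|\int_\X g\,T\a\,d\mu\big|\lesssim\|g\|_{BMO}$ for all $(H^1_\rho,q)$-atoms $\a$ associated with $B=B(x_0,r)$ and all $g\in C_c(\X)$; since $\int_\X T\a\,d\mu=\langle T^*1,\a\rangle=0$ one may replace $g$ by $g-g_B$ and conclude by Lemma \ref{20:50, 21/12/2023}\eqref{20:54, 21/12/2023}, which is exactly where the log-Dini hypothesis on $\omega_1,\omega_2$ is needed. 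For the necessity direction, if $T$ maps $H^1_\rho(\X)$ boundedly into $H^1(\X)$, then each $(H^1_\rho,q)$-atom $\a$ satisfies $T\a\in H^1(\X)\subset L^1(\X)$, hence $\langle T^*1,\a\rangle=\int_\X T\a\,d\mu=0$; since finite linear combinations of atoms are dense in $H^1_\rho(\X)$ by Theorem \ref{atomic decomposition}\eqref{07:03, 14/10/2023}, this yields $T^*1=0$.

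All the genuinely analytic content sits in Lemma \ref{20:50, 21/12/2023}, whose proof is where the $L^s$-type size and smoothness conditions \eqref{generalized Calderon-Zygmund 1}--\eqref{generalized Calderon-Zygmund 2} and the admissibility of $\rho$ are exploited, after reducing the exponent from $s$ to $q$ via Remark \ref{21:02, 21/12/2023}\eqref{21:06, 21/12/2023}. Granting that lemma I do not anticipate any serious obstacle: the only matters requiring attention are the single choice of $q\in(1,s)$ that makes the $L^q$-boundedness of $T$ and the various atomic criteria simultaneously available, and the routine density argument in the necessity half of (ii).
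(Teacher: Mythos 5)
Your proposal is correct and follows exactly the route the paper takes: the paper's own proof of this theorem consists precisely of the remark that one repeats the proof of Theorem \ref{14:43, 02/11/2023} with Lemma \ref{14:58, 06/10/2023} replaced by Lemma \ref{20:50, 21/12/2023}, which is what you have written out (the only adjustment being the use of $(H^1_\rho,q)$-atoms for some $q\in(1,s)$ in place of $(H^1_\rho,2)$-atoms, exactly as needed since $T$ is only bounded on $L^q(\X)$ for $q\in(1,s)$). Your explicit density argument for the necessity half of (ii) fills in a step the paper leaves implicit.
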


\begin{proof}
	By using Lemma \ref{20:50, 21/12/2023} instead of Lemma \ref{14:58, 06/10/2023}, the proof of Theorem \ref{17:12, 24/10/2023} is similar to the one of Theorem \ref{14:43, 02/11/2023}. We omit the details of the proof here.
\end{proof}

\begin{corollary}
	Under the same hypothesis as in Theorem \ref{17:12, 24/10/2023}. Then:
	\begin{enumerate}[\rm (i)]
		\item $T^*$ is bounded from $L^\infty(\X)$ into $BMO(\X)$. 
		
		\item $T^*$ is bounded from $BMO(\X)$ into $BMO_\rho(\X)$ if and only if $T^*1=0$ whenever $\omega_1$ and $\omega_2$ satisfy the log-Dini condition. 
	\end{enumerate}
\end{corollary}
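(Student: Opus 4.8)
The plan is to deduce both assertions from Theorem~\ref{17:12, 24/10/2023} by duality, using the identifications $BMO(\X)=(H^1(\X))^*$ and $BMO_\rho(\X)=(H^1_\rho(\X))^*$ of Theorem~\ref{06:44, 14/10/2023}. Since for the operators of Definition~\ref{20:36, 21/12/2023} the adjoint $T^*$ need not itself be a kernel operator, the statements are read as follows: $T^*$ is the $L^2$-adjoint of $T$ (which makes sense on, say, $L^2\cap L^\infty$ because $T$ is bounded on the relevant $L^q$, see Remark~\ref{21:02, 21/12/2023}\eqref{21:03, 21/12/2023}), and for $g$ in $L^\infty(\X)$ or $BMO(\X)$ one identifies $T^*g$ with the element of $BMO_\rho(\X)$ representing the functional $f\mapsto\langle Tf,g\rangle$ on $H^1_\rho(\X)$; this is legitimate because, on the dense subspace of finite linear combinations of $(H^1_\rho,q)$-atoms, $\langle f,T^*g\rangle=\langle Tf,g\rangle$ and the right-hand side is controlled by $\|f\|_{H^1_\rho}$ once one knows $Tf\in L^1(\X)$ (respectively $Tf\in H^1(\X)$).

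For part (i): by Theorem~\ref{17:12, 24/10/2023}\eqref{17:52, 10/10/2023}, $T$ maps $H^1_\rho(\X)$ boundedly into $L^1(\X)$, so for $g\in L^\infty(\X)$ and any finite linear combination $f$ of $(H^1_\rho,q)$-atoms,
$$\left|\int_{\X} f(x)\,T^*g(x)\,d\mu(x)\right|=\left|\int_{\X} Tf(x)\,g(x)\,d\mu(x)\right|\leq\|Tf\|_{L^1}\,\|g\|_{L^\infty}\lesssim\|f\|_{H^1_\rho}\,\|g\|_{L^\infty}.$$
Hence $f\mapsto\int_{\X} f\,T^*g\,d\mu$ is a bounded functional on $H^1_\rho(\X)$ of norm $\lesssim\|g\|_{L^\infty}$, so by Theorem~\ref{06:44, 14/10/2023}\eqref{14:53, 13/12/2023} it is represented by a unique $T^*g\in BMO_\rho(\X)$ with $\|T^*g\|_{BMO_\rho}\lesssim\|g\|_{L^\infty}$; since $\|T^*g\|_{BMO}\leq\|T^*g\|_{BMO_\rho}$, this proves (i) (in fact with $BMO(\X)$ replaced by $BMO_\rho(\X)$).

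For part (ii), assume $\omega_1,\omega_2$ satisfy the log-Dini condition. If $T^*1=0$, Theorem~\ref{17:12, 24/10/2023}\eqref{17:50, 10/10/2023} gives that $T:H^1_\rho(\X)\to H^1(\X)$ is bounded, and dualizing exactly as in part (i) --- now pairing with $g\in BMO(\X)$, using $|\langle Tf,g\rangle|\leq\|Tf\|_{H^1}\|g\|_{BMO}\lesssim\|f\|_{H^1_\rho}\|g\|_{BMO}$ and Theorem~\ref{06:44, 14/10/2023}\eqref{14:53, 13/12/2023} --- shows $T^*:BMO(\X)\to BMO_\rho(\X)$ is bounded. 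Conversely, if $T^*:BMO(\X)\to BMO_\rho(\X)$ is bounded, then for any finite linear combination $f$ of $(H^1_\rho,q)$-atoms and any $g\in C_c(\X)\subset L^\infty(\X)\subset BMO(\X)$,
$$\left|\int_{\X} Tf(x)\,g(x)\,d\mu(x)\right|=\left|\int_{\X} f(x)\,T^*g(x)\,d\mu(x)\right|\lesssim\|f\|_{H^1_\rho}\,\|T^*g\|_{BMO_\rho}\lesssim\|f\|_{H^1_\rho}\,\|g\|_{BMO};$$
since $Tf\in L^1(\X)$ by Theorem~\ref{17:12, 24/10/2023}\eqref{17:52, 10/10/2023}, the classical duality characterization of $H^1(\X)$ --- a function in $L^1(\X)$ whose pairings $\int_{\X}h\,g\,d\mu$ with $g\in C_c(\X)$, $\|g\|_{BMO}\leq 1$, are uniformly bounded belongs to $H^1(\X)$ with comparable norm, see \cite[Theorem (4.1)]{CW} --- gives $\|Tf\|_{H^1}\lesssim\|f\|_{H^1_\rho}$, so $T:H^1_\rho(\X)\to H^1(\X)$ is bounded, and Theorem~\ref{17:12, 24/10/2023}\eqref{17:50, 10/10/2023} then forces $T^*1=0$.

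All the analytic substance is already contained in Theorem~\ref{17:12, 24/10/2023}; the points that need care are the duality bookkeeping, and I expect the main (though not deep) obstacle to be the converse direction of part (ii): converting a uniform bound on the pairings $\int_{\X}Tf\cdot g\,d\mu$ into membership of $Tf$ in $H^1(\X)$ with controlled norm, which relies on the classical duality characterization of $H^1(\X)$ (equivalently, on the atomic decomposition together with $BMO(\X)=(H^1(\X))^*$), and, secondarily, the routine verification that the relevant functionals are represented by well-defined elements of $BMO_\rho(\X)$, so that $T^*g$ is unambiguously defined.
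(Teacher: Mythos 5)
Your argument is correct and is exactly the intended derivation: the paper states this corollary without proof, immediately after Theorem \ref{17:12, 24/10/2023}, as a direct duality consequence via $BMO(\X)=(H^1(\X))^*$ and $BMO_\rho(\X)=(H^1_\rho(\X))^*$, and your handling of the only delicate point (the converse in (ii), passing from the uniform pairing bound against $g\in C_c(\X)$ to $Tf\in H^1(\X)$ via \cite[Theorem (4.1)]{CW}) mirrors the device the paper itself uses in the proof of Theorem \ref{14:43, 02/11/2023}. Note also that your part (i) in fact yields the stronger conclusion $T^*:L^\infty(\X)\to BMO_\rho(\X)$, which is consistent with Corollary \ref{12:49, 04/12/2023}\eqref{12:50, 04/12/2023}.
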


\begin{theorem}\label{17:55, 26/10/2023}
	Let $s\in (1,\infty)$, $q \in (1,s)$ and $T$ be a generalized $(\omega_1,\omega_2,s)_\rho$-Calder\'on-Zygmund operator with $\omega_1, \omega_2$ satisfying the log-Dini condition. Then, the following three statements are equivalent:
	\begin{enumerate}[\rm (i)]
		\item $T$ is bounded on $H^1_\rho(\X)$;
		
		\item There exists a positive constant $C$ such that
		$$\left|\langle T^*1,\a\rangle\right|=\left|\int_{\X}T\a(x)d\mu(x)\right|\leq  \frac{C}{\log\left(e+\frac{\rho(x_0)}{r}\right)}$$
		for all $(H^1_\rho,q)$-atoms $\a$ related to the balls $B=B(x_0,r)$;
		
		\item $T^*1\in BMO_\rho^{\log}(\X)$, that is, there exists a positive constant $C$ such that		
		$$\frac{1}{\mu(B)}\int_{B} \left|T^*1(x)-(T^*1)_B\right|d\mu(x)\leq  \frac{C}{\log\left(e+\frac{\rho(x_0)}{r}\right)}$$		
		for all balls $B=B(x_0,r)$.		
	\end{enumerate}
\end{theorem}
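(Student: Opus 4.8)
The plan is to repeat the proof of Theorem~\ref{18:02, 10/10/2023} almost verbatim, with Lemma~\ref{20:50, 21/12/2023} playing the role of Lemma~\ref{14:58, 06/10/2023}. The hypothesis $q\in(1,s)$ is exactly what makes Lemma~\ref{20:50, 21/12/2023} applicable to $(H^1_\rho,q)$-atoms, and by Remark~\ref{21:02, 21/12/2023}\eqref{21:03, 21/12/2023} it also guarantees that $T$ is bounded on $L^p(\X)$ for every $p\in(1,s)$; this is the only $L^p$-boundedness of $T$ that the argument ever uses. The one feature that requires care is that, unlike in Section~4, we only have $T^*1\in BMO(\X)$ (the corollary following Theorem~\ref{17:12, 24/10/2023}) and not $T^*1\in BMO_\rho(\X)$; I indicate below how to dispense with the $BMO_\rho$-level facts. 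The operator-free ingredients --- Lemma~\ref{07:01, 12/10/2023}, Lemma~\ref{07:01, 14/10/2023}, Lemma~\ref{17:37, 11/10/2023}, Lemma~\ref{17:53, 11/10/2023}, Remark~\ref{12:09, 22/10/2023}, Theorem~\ref{atomic decomposition}\eqref{07:03, 14/10/2023} and \cite[Proposition~3.2]{YZ} --- are used without change.

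To prove that (i) implies (ii), I would fix an $(H^1_\rho,q)$-atom $\a$ related to $B=B(x_0,r)$ and take $g_{x_0}$ as in \eqref{17:18, 11/10/2023}. By Lemma~\ref{17:37, 11/10/2023} one has $\log(e+\rho(x_0)/r)\lesssim(g_{x_0})_B$ and $\|g_{x_0}\|_{BMO_\rho}\lesssim1$, so $\log(e+\rho(x_0)/r)\,|\int_\X T\a\,d\mu|\lesssim|(g_{x_0})_B\int_\X T\a\,d\mu|\leq|\langle T\a,g_{x_0}\rangle|+\|(g_{x_0}-(g_{x_0})_B)T\a\|_{L^1}$; the first term is $\leq\|T\a\|_{H^1_\rho}\|g_{x_0}\|_{BMO_\rho}\lesssim\|T\|_{H^1_\rho\to H^1_\rho}$ by Theorem~\ref{06:44, 14/10/2023}\eqref{14:53, 13/12/2023}, and the second is $\lesssim\|g_{x_0}\|_{BMO}\lesssim1$ by Lemma~\ref{20:50, 21/12/2023}\eqref{20:54, 21/12/2023}. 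To prove that (ii) implies (iii), note that since $T^*1\in BMO(\X)$, Lemma~\ref{07:01, 12/10/2023} reduces matters to showing $\|\a(T^*1-(T^*1)_B)\|_{H^1_\rho}\lesssim1$ for every $(H^1_\rho,q)$-atom $\a$ related to $B=B(x_0,r)$. With $s_0:=\min\{2,(q+1)/2\}\in(1,q)$, Lemma~\ref{17:53, 11/10/2023}\eqref{14:01, 12/10/2023} gives $\|\a(T^*1-(T^*1)_B)\|_{L^{s_0}}\lesssim\mu(B)^{1/s_0-1}\|T^*1\|_{BMO}$, and it remains to bound $|\int_\X\a(T^*1-(T^*1)_B)\,d\mu|$ by $C/\log(e+\rho(x_0)/r)$. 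Here I distinguish two regimes: if $r<\rho(x_0)/4$ then $\int_\X\a\,d\mu=0$, so this integral equals $\langle T^*1,\a\rangle$ and (ii) applies; if $\rho(x_0)/4\leq r<\rho(x_0)$ then $\log(e+\rho(x_0)/r)\leq\log(e+4)$, so it suffices to bound the integral by a constant, which the H\"older inequality and Remark~\ref{14:22, 18/10/2023}\eqref{09:16, 29/10/2023} do via $|\int_\X\a(T^*1-(T^*1)_B)\,d\mu|\leq\|\a\|_{L^q}\|T^*1-(T^*1)_B\|_{L^{q'}(B)}\lesssim\|T^*1\|_{BMO}$. Hence $\a(T^*1-(T^*1)_B)$ is a bounded multiple of an $(H^1_\rho,s_0)$-atom of log-type, and Remark~\ref{12:09, 22/10/2023}\eqref{20:22, 20/10/2023} together with Lemma~\ref{07:01, 14/10/2023} gives the required $H^1_\rho$-bound.

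To prove that (iii) implies (i), by \cite[Proposition~3.2]{YZ} and Lemma~\ref{07:01, 14/10/2023} it is enough to show that $T\a$ is a bounded multiple of an $(H^1_\rho,q,\omega_1,\omega_2)$-molecule of log-type for every $(H^1_\rho,q)$-atom $\a$ related to $B=B(x_0,r)$. Property (a) of Definition~\ref{the definition for atoms and molecules of log-type}\eqref{09:28, 27/11/2023} is precisely Lemma~\ref{20:50, 21/12/2023}\eqref{20:51, 21/12/2023}. For property (b), namely $|\int_\X T\a\,d\mu|\lesssim1/\log(e+\rho(x_0)/r)$, I would first invoke Lemma~\ref{07:01, 12/10/2023} (legitimate since $T^*1\in BMO^{\log}_\rho(\X)$) to get $\|\a(T^*1-(T^*1)_B)\|_{H^1_\rho}\lesssim1$, and then run the argument of ``(i) implies (ii)'' with $\a(T^*1-(T^*1)_B)$ in place of $T\a$: with the same $g_{x_0}$, Theorem~\ref{06:44, 14/10/2023}\eqref{14:53, 13/12/2023} and Lemma~\ref{17:53, 11/10/2023}\eqref{14:00, 12/10/2023} yield $|\int_\X\a(T^*1-(T^*1)_B)\,d\mu|\lesssim1/\log(e+\rho(x_0)/r)$. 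Since $\int_\X T\a\,d\mu=\langle T^*1,\a\rangle$, this is already property (b) when $\int_\X\a\,d\mu=0$ (the case $r<\rho(x_0)/4$); when $\rho(x_0)/4\leq r<\rho(x_0)$ the quantity $\log(e+\rho(x_0)/r)$ is bounded, and property (b) follows from the cruder estimate $|\int_\X T\a\,d\mu|\leq\|T\a\|_{L^1}\lesssim\|\omega_1\|_{\text{Dini}}+\|\omega_2\|_{\text{Dini}}\lesssim1$, obtained by summing Lemma~\ref{20:50, 21/12/2023}\eqref{20:51, 21/12/2023} as in the proof of Theorem~\ref{17:12, 24/10/2023}\eqref{17:52, 10/10/2023}.

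I expect the work to be bookkeeping rather than a new idea. One must verify that every auxiliary exponent that appears --- $q$ itself, the substitute $s_0=\min\{2,(q+1)/2\}$, and the conjugates $q'$ and $s_0'$ --- is either $<s$, so that the relevant $L^p$-boundedness of $T$ is genuinely at hand, or else occurs only inside operator-free lemmas; and one must systematically replace every appeal to $T^*1\in BMO_\rho(\X)$ from Section~4 by the dichotomy $r<\rho(x_0)/4$ versus $r\geq\rho(x_0)/4$ used above, exploiting that $\log(e+\rho(x_0)/r)\sim1$ once $r\gtrsim\rho(x_0)$. With these provisos the three implications close exactly as in the proof of Theorem~\ref{18:02, 10/10/2023}.
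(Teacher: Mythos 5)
Your proposal is correct and follows essentially the same route as the paper, which simply runs the proof of Theorem \ref{18:02, 10/10/2023} verbatim with Lemma \ref{20:50, 21/12/2023} in place of Lemma \ref{14:58, 06/10/2023}. Your extra dichotomy $r<\rho(x_0)/4$ versus $\rho(x_0)/4\leq r<\rho(x_0)$, introduced to avoid invoking $T^*1\in BMO_\rho(\X)$, is harmless but not strictly necessary: Theorem \ref{17:12, 24/10/2023}(i) together with the duality $(H^1_\rho(\X))^*=BMO_\rho(\X)$ of Theorem \ref{06:44, 14/10/2023}(ii) already yields that $T^*$ maps $L^\infty(\X)$ into $BMO_\rho(\X)$, so $T^*1\in BMO_\rho(\X)$ and the appeals to Remark \ref{12:09, 22/10/2023}(i) go through exactly as in Section 4.
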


\begin{proof}
	By using Lemma \ref{20:50, 21/12/2023} instead of Lemma \ref{14:58, 06/10/2023}, the proof of Theorem \ref{17:55, 26/10/2023} is similar to the one of Theorem \ref{18:02, 10/10/2023}. We omit the details of the proof here.
\end{proof}

\begin{corollary}
	Under the same hypothesis as in Theorem \ref{17:55, 26/10/2023}. Then, the following three statements are equivalent:
	\begin{enumerate}[\rm (i)]
		\item $T^*$ is bounded on $BMO_\rho(\X)$;
		
		\item There exists a positive constant $C$ such that
		$$\left|\langle T^*1,\a\rangle\right|=\left|\int_{\X} T\a(x)d\mu(x)\right|\leq  \frac{C}{\log\left(e+\frac{\rho(x_0)}{r}\right)}$$
		for all $(H^1_\rho,q)$-atoms $\a$ related to the balls $B=B(x_0,r)$;
		
		\item $T^*1\in BMO_\rho^{\log}(\X)$, that is, there exists a positive constant $C$ such that		
		$$\frac{1}{\mu(B)}\int_{B} \left|T^*1(x)-(T^*1)_B\right|d\mu(x)\leq  \frac{C}{\log\left(e+\frac{\rho(x_0)}{r}\right)}$$		
		for all balls $B=B(x_0,r)$.		
	\end{enumerate}
\end{corollary}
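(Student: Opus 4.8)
The plan is to reduce the corollary to Theorem~\ref{17:55, 26/10/2023}. Note first that statements (ii) and (iii) of the corollary coincide \emph{verbatim} with statements (ii) and (iii) of Theorem~\ref{17:55, 26/10/2023}; hence, by that theorem, they are equivalent to each other and to the boundedness of $T$ on $H^1_\rho(\X)$. It therefore remains only to prove that $T^*$ is bounded on $BMO_\rho(\X)$ if and only if $T$ is bounded on $H^1_\rho(\X)$. This is a duality statement resting on Theorem~\ref{06:44, 14/10/2023}\eqref{14:53, 13/12/2023} ($BMO_\rho(\X)$ is the dual of $H^1_\rho(\X)$) and on the fact, from Remark~\ref{21:02, 21/12/2023}\eqref{21:03, 21/12/2023}, that $T$ is bounded on $L^q(\X)$ for every $q\in(1,s)$, so that the $L^{q'}$-transpose $T^*$ is a well-defined bounded operator on $L^{q'}(\X)$ and $\int_\X (T\a)\,g\,d\mu=\int_\X \a\,(T^*g)\,d\mu$ for every $(H^1_\rho,q)$-atom $\a$ and every $g\in C_c(\X)$.

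Assume first that $T$ is bounded on $H^1_\rho(\X)$. By Theorem~\ref{atomic decomposition}\eqref{07:03, 14/10/2023}, every $(H^1_\rho,q)$-atom $\a$ satisfies $T\a\in H^1_\rho(\X)$ with $\|T\a\|_{H^1_\rho}\lesssim 1$. Given $g\in BMO_\rho(\X)$, the map $\a\mapsto\langle g,T\a\rangle$ then satisfies $|\langle g,T\a\rangle|\le\|g\|_{BMO_\rho}\|T\a\|_{H^1_\rho}\lesssim\|g\|_{BMO_\rho}$ uniformly in $\a$; by Theorem~\ref{atomic decomposition}\eqref{07:03, 14/10/2023} and Theorem~\ref{06:44, 14/10/2023}\eqref{14:53, 13/12/2023}, this linear functional is represented by an element of $BMO_\rho(\X)$, which we denote $T^*g$ (it agrees with the $L^{q'}$-transpose when $g\in C_c(\X)$, by the displayed identity, and with the distribution $T^*1$ of statements (ii)--(iii) when $g\equiv1$, since then $1\cdot T\a=T\a\in L^1(\X)$), and $\|T^*g\|_{BMO_\rho}\lesssim\|g\|_{BMO_\rho}$. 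Hence $T^*$ is bounded on $BMO_\rho(\X)$.

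Conversely, assume $T^*$ is bounded on $BMO_\rho(\X)$. By Theorem~\ref{atomic decomposition}\eqref{07:03, 14/10/2023} it is enough to prove $\|T\a\|_{H^1_\rho}\lesssim 1$ for every $(H^1_\rho,q)$-atom $\a$. Since $T\a\in L^1(\X)$ by Theorem~\ref{17:12, 24/10/2023}\eqref{17:52, 10/10/2023}, we argue as in the proof of Lemma~\ref{07:01, 14/10/2023}: by \cite[Theorem 1.1]{DHK} it suffices to bound $\left|\int_\X (T\a)\,g\,d\mu\right|$ by a constant multiple of $\|g\|_{BMO_\rho}$ for all $g\in C_c(\X)$. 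Using the displayed identity, $\|\a\|_{H^1_\rho}\lesssim 1$, the duality pairing estimate from Theorem~\ref{06:44, 14/10/2023}\eqref{14:53, 13/12/2023}, and the hypothesis applied to $g\in C_c(\X)\subset BMO_\rho(\X)$, we obtain $\left|\int_\X (T\a)\,g\,d\mu\right|=\left|\int_\X \a\,(T^*g)\,d\mu\right|\lesssim\|\a\|_{H^1_\rho}\|T^*g\|_{BMO_\rho}\lesssim\|g\|_{BMO_\rho}$, as required. Hence $T$ is bounded on $H^1_\rho(\X)$.

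Combining the two implications with Theorem~\ref{17:55, 26/10/2023} gives the equivalence of (i), (ii) and (iii). I expect the only genuinely delicate point to be the one flagged in the first paragraph: verifying that the operator $T^*$ appearing in (i), the operator whose value on $1$ enters statement (ii), the Banach-space adjoint of $T|_{H^1_\rho(\X)}$, and the $L^{q'}$-transpose of $T$ all denote the same object. Once this bookkeeping is settled everything is routine, and the argument runs in exact parallel to the corollary following Theorem~\ref{18:02, 10/10/2023}.
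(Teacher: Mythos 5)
Your proof is correct and matches the paper's intent: the paper states this corollary without proof, as an immediate consequence of Theorem \ref{17:55, 26/10/2023} (whose items (ii) and (iii) are verbatim the same) together with the $H^1_\rho$--$BMO_\rho$ duality of Theorem \ref{06:44, 14/10/2023}\eqref{14:53, 13/12/2023}, which is exactly the reduction you carry out. Your extra care in identifying the Banach-space adjoint with the $L^{q'}$-transpose and with the $T^*1$ of statements (ii)--(iii) is sound bookkeeping rather than a different route.
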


Finally, we give the proof of Lemma \ref{17:37, 11/10/2023} by proving the following proposition.

\begin{proposition}\label{14:47, 18/10/2023}
	There exists a positive constant $C$ such that
	\begin{equation}\label{15:00, 18/10/2023}
		\|\log d(\cdot,x_0)\|_{BMO}\leq C
	\end{equation}
	and
	\begin{equation}\label{15:01, 18/10/2023}
		\|g_{x_0}\|_{BMO_\rho}\leq C
	\end{equation}
	for all $x_0\in \X$, where the function $g_{x_0}$ is as in \eqref{17:18, 11/10/2023}.
\end{proposition}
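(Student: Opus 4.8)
The plan is to prove \eqref{15:00, 18/10/2023} first and then deduce \eqref{15:01, 18/10/2023} from it together with the definition \eqref{17:18, 11/10/2023} of $g_{x_0}$ and the admissibility of $\rho$.

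\textbf{Step 1: The $BMO$ estimate for $\log d(\cdot,x_0)$.} Fix a ball $B=B(z,\varrho)\subset\X$ and write $f=\log d(\cdot,x_0)$. I would split into two regimes according to the position of $x_0$ relative to the enlarged ball, say $2\kappa B$. If $d(z,x_0)\geq c\varrho$ for a suitable constant depending only on $\kappa$, then for $x\in B$ one has $d(x,x_0)\sim d(z,x_0)$ by the quasi-triangle inequality, so $f(x)-\log d(z,x_0)$ is bounded by an absolute constant on $B$, giving $MO(f,B)\lesssim 1$. If instead $d(z,x_0)<c\varrho$, then $x_0\in CB$ for some constant $C=C(\kappa)$, and $B\subset B(x_0,C\varrho)$; in this case I would compare $f$ against the constant $\log\varrho$ and estimate $\frac{1}{\mu(B)}\int_B|\log d(x,x_0)-\log\varrho|\,d\mu(x)$ directly. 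The integrand is $|\log\tfrac{d(x,x_0)}{\varrho}|$, and the contribution of the region where $d(x,x_0)\sim\varrho$ is $O(1)$, while the region near $x_0$ is handled by slicing into dyadic annuli $A_k=\{2^{-k-1}\varrho\le d(x,x_0)<2^{-k}\varrho\}$, $k\geq 0$: on $A_k$ the integrand is $\lesssim k+1$, and $\mu(A_k)\leq\mu(B(x_0,2^{-k}\varrho))\lesssim 2^{-k\mathfrak d}\mu(B(x_0,\varrho))\lesssim 2^{-k\mathfrak d}\mu(B)$ by the reverse doubling half of \eqref{16:53, 14/10/2023} (with $\lambda=2^k$), so $\sum_k (k+1)2^{-k\mathfrak d}<\infty$. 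Since the estimate holds against a fixed constant (up to a factor $2$ this controls $MO(f,B)$), this finishes \eqref{15:00, 18/10/2023}.

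\textbf{Step 2: From $\log d(\cdot,x_0)$ to $g_{x_0}$.} By \eqref{17:18, 11/10/2023}, $g_{x_0}(x)=\max\{0,\,1+\log\rho(x_0)-\log d(x,x_0)\}=(1+\log\rho(x_0)-f(x))_+$. Since $t\mapsto (c-t)_+$ is $1$-Lipschitz, composing with an $L^1$-mean oscillation estimate gives $MO(g_{x_0},B)\leq MO(f,B)\lesssim 1$ uniformly, hence $\|g_{x_0}\|_{BMO}\lesssim 1$. It remains to control $\sup_B |g_{x_0}|_B$ over balls $B=B(w,\varrho)$ with $\varrho\geq\rho(w)$, as required by the definition of $\|\cdot\|_{BMO_\rho}$. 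Here I would bound $g_{x_0}$ pointwise: $g_{x_0}(x)\neq 0$ only when $d(x,x_0)<e\,\rho(x_0)$, and on that set $0\leq g_{x_0}(x)=1+\log\tfrac{\rho(x_0)}{d(x,x_0)}$, which is integrable near $x_0$ by the same dyadic-annulus argument as in Step 1. The key point is to show $\frac{1}{\mu(B)}\int_B g_{x_0}\,d\mu\lesssim 1$: if $B\cap B(x_0,e\rho(x_0))=\emptyset$ the integral is $0$; otherwise, using the quasi-triangle inequality and $\varrho\geq\rho(w)$ together with the admissibility condition \eqref{admissible function} relating $\rho(w)$ and $\rho(x_0)$, one shows $\rho(x_0)\lesssim \varrho\,(1+d(w,x_0)/\rho(w))^{k_0}$ cannot blow up uncontrollably relative to the portion of $B$ that meets the support; a cleaner route is to note $\int_B g_{x_0}\,d\mu\le \int_{B(x_0,e\rho(x_0))} g_{x_0}\,d\mu \lesssim \mu(B(x_0,\rho(x_0)))$ (finite integral of the log singularity, summed over dyadic annuli) and then compare $\mu(B(x_0,\rho(x_0)))$ with $\mu(B)$ using that $B$ meets $B(x_0,e\rho(x_0))$, $\varrho\ge\rho(w)$, the admissibility bound, and doubling. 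This yields $|g_{x_0}|_B\lesssim 1$, completing \eqref{15:01, 18/10/2023}.

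\textbf{Main obstacle.} The genuinely delicate point is the large-ball average bound $\sup_{\varrho\ge\rho(w)}|g_{x_0}|_B\lesssim 1$ with a constant independent of $x_0$ and $w$: the Lipschitz/$BMO$ argument only controls oscillation, not size, and one must use the interplay between $\varrho\ge\rho(w)$, the admissibility inequality \eqref{admissible function} (which compares $\rho(w)$ and $\rho(x_0)$ with a power $k_0$ loss in $d(w,x_0)/\rho(w)$), and the doubling/reverse-doubling exponents $\mathfrak n,\mathfrak d$ in \eqref{16:53, 14/10/2023} to absorb that polynomial loss. Everything else — the dyadic annulus decomposition around $x_0$ and the $O(1)$ bound away from $x_0$ — is routine and essentially identical to the classical fact that $\log|x|\in BMO(\R^n)$.
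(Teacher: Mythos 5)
Your proposal is correct and follows essentially the same route as the paper's proof: the same two-case analysis (comparing $\log d(\cdot,x_0)$ against $\log d(z,x_0)$ when $x_0$ is far from the ball and against $\log\varrho$ with a dyadic-annulus/reverse-doubling sum when it is close), the same reduction of the $BMO$ bound for $g_{x_0}$ to that of $\log d(\cdot,x_0)$, and the same large-ball average estimate via $\int_{B(x_0,e\rho(x_0))}g_{x_0}\,d\mu\lesssim\mu(B(x_0,\rho(x_0)))$ combined with the admissibility inequality \eqref{admissible function} to compare $\mu(B(x_0,\rho(x_0)))$ with $\mu(B)$. You also correctly identify the large-ball average as the only genuinely delicate point, which is exactly where the paper spends its effort.
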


\begin{proof}
	Observe that \eqref{15:00, 18/10/2023} is equivalent to the following
	$$\sup_{B}\frac{1}{\mu(B)}\inf_{C_B} \int_B \left|\log d(y,x_0)-C_B\right|d\mu(y)\lesssim 1,$$
	where the supremum is taken over all balls $B\subset\X$. Thus, it suffices to show that 
	\begin{equation}\label{16:01, 17/10/2023}
		\frac{1}{\mu(B)} \int_B \left|\log d(y,x_0)-C_B\right|d\mu(y)\lesssim 1
	\end{equation}
	for all balls $B=B(x,r)\subset \X$ with
	$$C_B=\begin{cases}
		\log d(x,x_0),& d(x,x_0)\geq 2\kappa r,\\
		\log r,& d(x,x_0) < 2\kappa r.
	\end{cases}$$
	Indeed, let us consider the following two cases:
	
	{\sl Case 1:} $d(x,x_0)\geq 2\kappa r$. Then, for any $y\in B=B(x,r)$, we have
	$$d(y,x_0)\leq \kappa(d(y,x)+d(x,x_0))<\frac{2\kappa+1}{2} d(x,x_0)$$
	and
	$$d(y,x_0)\geq \frac{1}{\kappa} d(x,x_0)-d(y,x)>\frac{1}{\kappa} d(x,x_0)-r\geq \frac{1}{2\kappa} d(x,x_0).$$
	Therefore,
	\begin{align*}
		\frac{1}{\mu(B)} \int_B \left|\log d(y,x_0)-C_B\right|d\mu(y)&=\frac{1}{\mu(B)} \int_B \left|\log d(y,x_0)- \log d(x,x_0)\right|d\mu(y)\\
		&\leq \log(2\kappa).
	\end{align*}
	
	{\sl Case 2:} $d(x,x_0)< 2\kappa r$. Then,
	$$B(x,r)\subset B(x_0, \kappa(2\kappa +1)r)\subset B(x, \kappa(2\kappa+\kappa(2\kappa+1))r).$$
	Therefore
	$$\mu(B(x,r))\leq \mu(B(x_0, \kappa(2\kappa +1)r))\leq \mu(B(x, \kappa(2\kappa+\kappa(2\kappa+1))r))\lesssim \mu(B(x,r))$$
	and, let $n_0:=\lfloor \log_2(\kappa(2\kappa+1))\rfloor +1$,
	\begin{align*}
		\int_B \left|\log d(y,x_0)-C_B\right|d\mu(y)&\leq \int_{B(x_0, 2^{n_0} r)} \left|\log d(y,x_0)- \log r\right|d\mu(y)\\
		&=\sum_{j=0}^{\infty} \int_{B(x_0, 2^{n_0-j}r)\setminus B(x_0, 2^{n_0-j-1}r)}\left|\log\frac{d(y,x_0)}{r}\right|d\mu(y)\\
		&\leq \sum_{j=0}^{\infty} (n_0+1+j)\mu(B(x_0, 2^{n_0-j}r)).
	\end{align*}
	This, together with \eqref{16:53, 14/10/2023}, implies that
	\begin{align*}
		\frac{1}{\mu(B)} &\int_B \left|\log d(y,x_0)-C_B\right|d\mu(y)\\
		&\lesssim \frac{1}{\mu(B(x_0, 2^{n_0}r))} \sum_{j=0}^{\infty} (n_0+1+j) C_0 2^{-j{\mathfrak d}}\mu(B(x_0, 2^{n_0}r))\lesssim 1,
	\end{align*}
	which proves \eqref{16:01, 17/10/2023}. Thus \eqref{15:00, 18/10/2023} holds.
	
	Noting that $\left\|\max\left\{0,1+\log\frac{\rho(x_0)}{d(\cdot,x_0)}\right\}\right\|_{BMO}\lesssim \|\log d(\cdot,x_0)\|_{BMO}\lesssim 1$, so \eqref{15:01, 18/10/2023} will be reduced to showing that  
	\begin{equation}\label{18:29, 16/10/2023}
		\frac{1}{\mu(B(x,r))}\int_{B(x,r)} g_{x_0}(y) d\mu(y)\lesssim 1
	\end{equation}
	for all balls $B(x,r)\subset \X$ with $r\geq \rho(x)$. First we show that
	\begin{equation}\label{18:14, 16/10/2023}
		\frac{1}{\mu(B(x_0, \rho(x_0)))}\int_{B(x_0, 3\rho(x_0))} g_{x_0}(y)d\mu(y)\lesssim 1.
	\end{equation}
	Indeed, by \eqref{16:53, 14/10/2023},
	\begin{align*}
		\int_{B(x_0,\rho(x_0))} \log\frac{\rho(x_0)}{d(y,x_0)}d\mu(y)&=\sum_{j=0}^{\infty} \int_{B(x_0, 2^{-j}\rho(x_0))\setminus B(x_0, 2^{-j-1}\rho(x_0))} \log\frac{\rho(x_0)}{d(y,x_0)}d\mu(y)\\
		&\leq \sum_{j=0}^{\infty} \mu(B(x_0, 2^{-j}\rho(x_0))) \log 2^{j+1}\\
		&\leq \sum_{j=0}^{\infty} C_0 2^{-j{\mathfrak d}}\mu(B(x_0,\rho(x_0)))(j+1)\\
		&\lesssim \mu(B(x_0,\rho(x_0))).
	\end{align*}
	Consequently, 
	\begin{align*}
		\int_{B(x_0, 3\rho(x_0))} g_{x_0}(y)d\mu(y) &= \int_{B(x_0, 3\rho(x_0))} \max\left\{0,1+\log\frac{\rho(x_0)}{d(y,x_0)}\right\}d\mu(y)\\
		&\leq \int_{B(x_0,\rho(x_0))}\left(1+\log\frac{\rho(x_0)}{d(y,x_0)}\right)d\mu(y) + \int_{B(x_0, 3\rho(x_0))\setminus B(x_0,\rho(x_0))}d\mu(y)\\
		&\lesssim \mu(B(x_0,\rho(x_0))),
	\end{align*}
	which proves \eqref{18:14, 16/10/2023}.
	
	We come now to prove \eqref{18:29, 16/10/2023}.	As supp$\,g_{x_0}\subset B(x_0,3\rho(x_0))$, we only need to consider the case of $B(x,r)\cap B(x_0,3\rho(x_0))\ne\emptyset$. Then, we have
	\begin{equation}\label{18:58, 16/10/2023}
		B(x_0, 3\rho(x_0))\subset B(x, (\kappa+\kappa^2)3\rho(x_0)+\kappa^2 r)
	\end{equation}
	and
	$$d(x,x_0)\leq \kappa(r+3\rho(x_0)).$$
	Therefore, by \eqref{admissible function},
	\begin{align*}
		\rho(x_0)\leq c_0 \rho(x)\left(1+\frac{d(x,x_0)}{\rho(x_0)}\right)^{k_0}\leq c_0 r \left(1+\frac{\kappa(r+3\rho(x_0))}{\rho(x_0)}\right)^{k_0},
	\end{align*}
	and thus
	$$\frac{r}{\rho(x_0)}\left(1+3\kappa+\kappa\frac{r}{\rho(x_0)}\right)^{k_0}\geq \frac{1}{c_0}.$$
	Consequently,
	$$\frac{r}{\rho(x_0)}\geq \min\left\{1,\frac{1}{c_0(1+4\kappa)^{k_0}}\right\}= \frac{1}{c_0(1+4\kappa)^{k_0}}.$$
	This, together with \eqref{18:58, 16/10/2023}, allows us to conclude that
	$$\mu(B(x_0,\rho(x_0)))\leq \mu(B(x_0,3\rho(x_0)))\leq \mu(B(x,C r))\lesssim \mu(B(x,r)).$$
	Hence, by \eqref{18:14, 16/10/2023}, $g_{x_0}\geq 0$ and supp$\,g_{x_0}\subset B(x_0,3\rho(x_0))$, we get
	$$\frac{1}{\mu(B(x,r))}\int_{B(x,r)} g_{x_0}(y) d\mu(y)\lesssim \frac{1}{\mu(B(x_0,\rho(x_0)))} \int_{B(x_0,3\rho(x_0))} g_{x_0}(y) d\mu(y)\lesssim 1.$$
	This proves \eqref{18:29, 16/10/2023}, and thus ends the proof of Proposition \ref{14:47, 18/10/2023}.
	
\end{proof}

	\begin{acknowledgement}
		The author would like to thank the referees for their careful reading and helpful suggestions. The paper was completed when the author visited Vietnam Institute for Advanced Study in Mathematics (VIASM). He would like to thank the VIASM for financial support and hospitality. This research is funded by Vietnam National Foundation for Science and Technology Development (NAFOSTED) under grant number 101.02-2023.09.
	\end{acknowledgement}
	
	\textbf{Conflict of interest:} The author states that there is no conflict of interest.

\end{document}